\documentclass{amsart}

\usepackage{amsmath, amsfonts, amssymb, amsthm, amscd, graphicx, float, epstopdf, tabu
}

\allowdisplaybreaks[4]

\usepackage[pdfpagemode={UseOutlines},bookmarks=true,bookmarksopen=true, bookmarksopenlevel=0,bookmarksnumbered=true,hypertexnames=false, colorlinks,linkcolor={blue},citecolor={blue},urlcolor={blue}, pdfstartview={FitV},unicode,breaklinks=true,backref=page]{hyperref}

\newtheorem{thm}{Theorem}[section]
\newtheorem{remark}[thm]{Remark}
\newtheorem{notation}[thm]{Notation}

\newtheorem{defn}[thm]{Definition}
\newtheorem{lem}[thm]{Lemma}
\newtheorem{prop}[thm]{Proposition}
\newtheorem{cor}[thm]{Corollary}

\begin{document}

\title{Rank $n$ swapping algebra for $\operatorname{PGL}_n$ Fock--Goncharov $\mathcal{X}$ moduli space}


\author{Zhe Sun}
\address{Department of mathematics, University of Luxembourg}
\email{sunzhe1985@gmail.com}
\thanks{The research leading to these results has received funding from the European Research Council under the {\em European Community}'s seventh Framework Programme (FP7/2007-2013)/ERC {\em Grant agreement no FP7-246918}.}
\keywords{Poisson algebra homomorphism, rank $n$ swapping algebra, Fock--Goncharov $\mathcal{X}$ moduli space.}

\subjclass[2010]{Primary 32G15; Secondary 17B63}

\date{}

\dedicatory{To William Goldman on the occasion of his sixtieth birthday}

\begin{abstract}
The {\em rank $n$ swapping algebra} is a Poisson algebra defined on the set of ordered pairs of points of the circle using linking numbers, whose geometric model is given by a certain subspace of $(\mathbb{K}^n \times \mathbb{K}^{n*})^r/\operatorname{GL}(n,\mathbb{K})$. For any ideal triangulation of $D_k$---a disk with $k$ points on its boundary, using determinants, we find an injective Poisson algebra homomorphism from the fraction algebra generated by the Fock--Goncharov coordinates for $\mathcal{X}_{\operatorname{PGL}_n,D_k}$ to the rank $n$ swapping multifraction algebra for $r=k\cdot(n-1)$ with respect to the (Atiyah--Bott--)Goldman Poisson bracket and the swapping bracket. This is the building block of the general surface case. Two such injective Poisson algebra homomorphisms related to two ideal triangulations $\mathcal{T}$ and $\mathcal{T}'$ are compatible with each other under the flips.
\end{abstract}

\maketitle


\section{Introduction}
We study the {\em Goldman Poisson structure} using circle, linking numbers, determinants and ratios. 
\subsection{Background and motivation}
Let $\mathcal{R}_{G,S}$ be the space of gauge equivalence classes of flat connections on a fixed principal $G$-bundle over $S$, where $G$ is a reductive Lie group and $S$ is a connected oriented closed Riemann surface of genus $g>1$. In early 80s, Atiyah and Bott \cite{atiyah1983yang} constructed a symplectic structure $\omega$ on $\mathcal{R}_{G,S}$ by symplectic reduction from infinite dimensional symplectic manifold $\mathcal{M}_{G,S}$ of all such connections via the moment map given by the curvature. From another point of view, the space $\mathcal{R}_{G,S}$ is $\operatorname{Hom}(\pi_1(S),G)/G$ using the monodromy representation of $\pi_1(S)$ with respect to the connection, where $G$ acts by conjugation. Then Goldman \cite{G84} identified the tangent space of $\operatorname{Hom}(\pi_1(S),G)/G$ with the group cohomology $H^1(\pi_1(S), \mathfrak{g})$ and interpret the symplectic structure $\omega$ in terms of the intersection pairings on the surface $S$---the cup product. This construction has been extended to the case where the Riemann surface $S$ has finitely many boundary components in \cite{AM95} \cite{GHJW97} and references therein, even with marked points on the boundary in \cite{FR98}.

When $S$ is a closed Riemann surface of genus $g>1$, there is a special connected component $H_n(S)$ of $\mathcal{R}_{\operatorname{PGL}(n,\mathbb{R}),S}$, containing all the $n$-Fuchsian representations, called {\em Hitchin component} \cite{H92}. Here an $n$-Fuchsian representation is the composition of a discrete faithful representation from $\pi_1(S)$ to $\operatorname{PSL}(2,\mathbb{R})$ with an irreducible representation from $\operatorname{PSL}(2,\mathbb{R})$ to $\operatorname{PGL}(n,\mathbb{R})$. The Hitchin component $H_n(S)$ is so nice that the quotient in the sense of geometric invariant theory \cite{MK94} is the same as its topological quotient. 

Let $E$ be a $n$-dimensional vector space and let $\Omega$ be a non-zero volume form of $E$. A {\em flag} $F$ for $\operatorname{PGL}_n$ is a nested collection of vector subspaces in $E$
\[\{0\subset F_1 \subset \cdots \subset F_{n-1}\subset F_n=E\;|\; \dim F_i=i \}\]
equipped with the volume form $\Omega$. The {\em flag variety} $\mathcal{B}$ is the space of all flags.
Labourie and Guichard \cite{Gu08}\cite{L06} identified each element $\rho$ in the Hitchin component $H_n(S)$ with a $\rho$-equivariant hyperconvex Frenet curve $\xi_\rho$ from the boundary at infinity $\partial_{\infty}(\pi_1(S))$ of $\pi_1(S)$ to the flag variety $\mathcal{B}(\mathbb{R})$ up to diagonal action by projective transformations. This identification allows us to study the Goldman Poisson structure on $H_n(S)$ by studying the Goldman Poisson structure on the space {\em $\mathcal{FR}_n$} of hyperconvex Frenet curves up to projective transformations. We write $\xi_\rho$ as a $(n-1)$-tuple $(\xi_\rho^1,\cdots, \xi_\rho^{n-1})$, where $\xi_\rho^{i}$ takes values in the Grassmannian $G_i(\mathbb{R}^n)$ for $i=1,\cdots, n-1$. Let $\tilde{\xi}^1_\rho$ ($\tilde{\xi}^{n-1}_\rho$ resp.) be any lift of $\xi_\rho$ ($\xi^{n-1}_\rho$ resp.) with the values in $\mathbb{R}^n$ ($\mathbb{R}^{n*}$ resp.). For any four distinct points $\textbf{x},\textbf{y},\textbf{z},\textbf{t}$ in $\partial_{\infty}(\pi_1(S))$, Labourie \cite{L07} defined a special function on the Hitchin component $H_n(S)$, called the {\em weak cross ratio}, defined as follows:
$$	\mathbb{B}_{\rho}(\textbf{x},\textbf{y},\textbf{z},\textbf{t}) = \frac{\left<\left.\tilde{\xi}^1_\rho(\textbf{x})\right\vert \tilde{\xi}^{n-1}_\rho(\textbf{z})\right> }{\left<\left.\tilde{\xi}^1_\rho(\textbf{x})\right\vert \tilde{\xi}^{n-1}_\rho(\textbf{t})\right>} \cdot \frac{\left<\left.\tilde{\xi}^1_\rho(\textbf{y})\right\vert \tilde{\xi}^{n-1}_\rho(\textbf{t})\right>}{\left<\left.\tilde{\xi}^1_\rho(\textbf{y})\right\vert \tilde{\xi}^{n-1}_\rho(\textbf{z})\right>}.
$$

Investigating the algebraic nature of weak cross ratios, Labourie \cite{L18} introduced the swapping algebra to characterize the Goldman Poisson structure on $H_n(S)$ for any $n>1$ and the second Adler--Gel'fand--Dickey Poisson structure \cite{Ma78}\cite{Di97} (and references therein) via Drinfel'd--Sokolov reduction \cite{DS81} on the space $Opers_n$ of $\operatorname{SL}(n,\mathbb{R})$-opers with trivial holonomy. Let us recall the swapping algebra as follows. 

We represent an {\em ordered pair} (of points) $(x, y)$ of a given set $\mathcal{P} \subseteq S^1$ by the expression $xy$, and we consider the commutative ring $$\mathcal{Z}(\mathcal{P}) := \mathbb{K}[\{xy\}_{ x,y \in \mathcal{P}}]/\left(\{xx\}_{ x \in \mathcal{P}}\right)$$ over a field $\mathbb{K}$ of characteristic zero. The ring $\mathcal{Z}(\mathcal{P})$ is equipped with the Poisson bracket $\{\cdot,\cdot\}$, called the {\em swapping bracket}, defined by extending to $\mathcal{Z}(\mathcal{P})$ the following formula on arbitrary generators $rx, sy$:
\begin{equation}
\{rx, sy\} = \mathcal{J}(rx, sy) \cdot ry \cdot sx ,
\end{equation}
using Leibniz's rule.
We define the {\em linking number} $\mathcal{J}(rx, sy) \in \{0, \pm1, \pm \frac{1}{2}\}$ on $S^1$ as in Figure \ref{swapswap1} which only depends on the corresponding position of the four points.
The {\em swapping algebra} of $\mathcal{P}$ is $(\mathcal{Z}(\mathcal{P}), \{\cdot , \cdot\})$. Then the {\em swapping multifraction algebra} $\left(\mathcal{B}(\mathcal{P}),\{\cdot,\cdot\}\right)$ is the sub-fraction algebra of the swapping algebra $\left(\mathcal{Z}(\mathcal{P}),\{\cdot,\cdot\}\right)$ generated by {\em cross fractions} like $\frac{xz}{xt}\cdot \frac{yt}{yz}$. 
By considering the homomorphism 
\[\tau :\mathcal{B}(\mathcal{P}) \rightarrow C^\infty\left(H_n(S)\right)\]
 that sends $\frac{xz}{xt}\cdot \frac{yt}{yz}$ to $\mathbb{B}_{\rho}(\textbf{x},\textbf{y},\textbf{z},\textbf{t})$, Labourie \cite{L18} showed that $\tau$ is ``asymptotically Poisson'' with respect to the swapping bracket and the Goldman Poisson bracket. 

However $\tau$ is not injective. To make the kernel of $\tau$ smaller, the {\em rank $n$ swapping algebra} $(\mathcal{Z}_n(\mathcal{P}),\{\cdot,\cdot\})$ is introduced in \cite{Su17}. Here $(\mathcal{Z}_n(\mathcal{P}),\{\cdot,\cdot\})$ is the quotient of the swapping algebra $(\mathcal{Z}(\mathcal{P}), \{\cdot , \cdot\})$ by the Poisson ideal $R_n(\mathcal{P})$ generated by $$\left\{ \det \left(x_i y_j\right)_{i,j=1}^{n+1} \in \mathcal{Z}(\mathcal{P}) \; |  \;   x_1, \cdots, x_{n+1}, y_1,\cdots, y_{n+1} \in \mathcal{P} \right\}.$$

The geometric model for $\mathcal{Z}_n(\mathcal{P})$ in \cite[Section 4]{Su17} arises naturally from the classical geometric invariant theory \cite{W39}. When $\mathcal{P}=\{x_1,\cdots,x_r\}$, we associate a pair $(\mathfrak{a}_i,\mathfrak{b}_i)\in \mathbb{K}^n \times \mathbb{K}^{n*}$ to each $x_i$ for $i=1,\cdots,r$. We consider the space $D_{n,r} = (\mathbb{K}^n \times \mathbb{K}^{n*})^r$ of $r$ vectors $\mathfrak{a}_1,\cdots,\mathfrak{a}_r$ in $\mathbb{K}^n$ and $r$ covectors $\mathfrak{b}_1,\cdots,\mathfrak{b}_r$ in $\mathbb{K}^{n*}$. For any $g \in \operatorname{GL}(n,\mathbb{K})$, the action of $g$ on the vector $\mathfrak{a}_i$ is the left multiplication by $g$, the action of $g$ on the covector $\mathfrak{b}_i$ is the right multiplication by $g^{-1}$. We define the product between a vector $\mathfrak{a}_i$ in $\mathbb{K}^n$ and a covector $\mathfrak{b}_j$ in $\mathbb{K}^{n*}$ by $\left<\mathfrak{a}_i|\mathfrak{b}_j\right> := \mathfrak{b}_j(\mathfrak{a}_i)$, which is $\operatorname{GL}(n,\mathbb{K})$ invariant. Then we associate each $\left<\mathfrak{a}_i|\mathfrak{b}_j\right>$ to each pair $x_i x_j \in \mathcal{Z}_n(\mathcal{P})$. The geometric model for $(\mathcal{Z}_n(\mathcal{P}),\{\cdot,\cdot\})$ is 
\begin{equation}
\label{equation:gm}
\mathcal{D}_{n,r}=\{(\mathfrak{a}_1,\mathfrak{b}_1,\cdots, \mathfrak{a}_r,\mathfrak{b}_r)\in D_{n,r}\;|\; <\mathfrak{a}_i|\mathfrak{b}_i>=0, i=1,\cdots,r\}/\operatorname{GL}(n,\mathbb{K}),
\end{equation}
which is also equipped with the swapping bracket. 

The {\em rank $n$ swapping multifraction algebra} $\left(\mathcal{B}_n(\mathcal{P}),\{\cdot,\cdot\}\right)$ is the sub-fraction algebra of the rank $n$ swapping algebra $\left(\mathcal{Z}_n(\mathcal{P}),\{\cdot,\cdot\}\right)$ generated by cross fractions. 
Then the homomorphism $\tau$ is changed into 
\[\tau_n :\mathcal{B}_n(\mathcal{P}) \rightarrow C^\infty\left(H_n(S)\right)\]
 that sends $\frac{xz}{xt}\cdot \frac{yt}{yz}$ to $\mathbb{B}_{\rho}(\textbf{x},\textbf{y},\textbf{z},\textbf{t})$. However $\tau_n$ is still not injective because of the $\pi_1(S)$ invariance of the weak cross ratios.

It motivates us to consider an injective Poisson homomorphism $\theta$ that sends a coordinate fraction ring of $H_n(S)$ to $\mathcal{B}_n(\mathcal{P})$ in Definition \ref{defnhom}. Our crucial point for defining such homomorphism is that we characterize a pairing between a vector and a covector by a $(n\times n)$-determinant in $\mathcal{Z}_n(\mathcal{P})$ instead of an ordered pair in $\mathcal{Z}_n(\mathcal{P})$. It turns out that the Fock--Goncharov coordinates for $\mathcal{X}_{\operatorname{PGL}_n,\hat{S}}$ \cite{FG06} work well, even for their corresponding quantized algebras \cite{Su}. The Poisson homomorphism in \cite[Theorem 10.7.2]{L18} for $Opers_n$ is still Poisson after replacing an ordered pair in $\mathcal{Z}_n(\mathcal{P})$ by a $(n\times n)$-determinant as shown in Section \ref{section:opers}. 

\emph{Instead of understanding the Goldman Poisson structure as the cup product on the surface, the rank $n$ swapping algebra provides another natural description using circle, linking numbers, determinants, ratios and classical geometric invariant ring.
}

\subsection{The main result}
\emph{We use $\textbf{x}$ for a vertex on the surface to distinguish it from an element $x$ of $\mathcal{P}$ throughout this paper.
}

Let $D_k$ be a disk $D$ with $k \geq 3$ points $m_b=\{\textbf{s}\prec \textbf{w}\prec \cdots \prec \textbf{t} \prec \textbf{s}\}$ on $\partial D$, where $\prec$ defines a cyclic order with respect to the anticlockwise orientation on the circle. 
In this case $\mathcal{X}_{\operatorname{PGL}_n,D_k} \cong \mathcal{B}^k/\operatorname{PGL}_n$ with respect to the diagonal action of $\operatorname{PGL}_n$.
Given an ideal triangulation $\mathcal{T}$ of $D_k$, the {\em $n$-triangulation} $\mathcal{T}_n$ is a subdivision of $\mathcal{T}$ such that each triangle of $\mathcal{T}$ is divided into $n^2$ triangles as in Figure \ref{figurekgon}. Fock and Goncharov (Definition \ref{definition:fgcoor}) parameterize $\mathcal{X}_{\operatorname{PGL}_n,D_k}$ by assigning the coordinate $X_V$ to each vertex $V$ of certain subset of vertices of $\mathcal{T}_n$. Let $\mathcal{FX}(\mathcal{T}_n)$ be the fraction ring generated by these $\{X_V\}$ over the field $\mathbb{K}$.
The {\em rank $n$ Fock--Goncharov Poisson bracket} is given by 
\[\left\{X_V ,\; X_W \right\}_{n} =  \varepsilon_{V, W} \cdot X_V \cdot X_W,
\]
where by Figure \ref{figurekgon}
\begin{equation}
\label{equation:eps}
\varepsilon_{VW} =  \#\{\;arrows \;from\; V \; to \; W\;\} -\# \{\;arrows \;from\; W \; to \; V\;\}.
\end{equation} 
The {\em rank $n$ Fock--Goncharov algebra} is $(\mathcal{FX}(\mathcal{T}_n),\{\cdot,\cdot\}_n)$.

Given $\xi \in \mathcal{B}^k$, for any $\textbf{r}\in m_b$, we choose a basis $\{\textbf{r}_1, \cdots, \textbf{r}_n\}$ of the flag $\xi(\textbf{r})$ such that $\textbf{r}_1, \cdots, \textbf{r}_i$ span the $i$ dimensional subspace $\xi^i(\textbf{r})$ of $\xi(\textbf{r})$. Then we choose a covector $\textbf{r}_i^c$ such that $\left<\textbf{r}_i|\textbf{r}_i^c\right>=0$ for $i=1,\cdots,n-1$.
For $\mathcal{X}_{\operatorname{PGL}_n,D_k}$, the key observation for relating the rank $n$ Fock--Goncharov algebra to the rank $n$ swapping multifraction algebra is the following:

{\em Any $\textbf{r}\in m_b$ and $i\in \{1,\cdots,n-1\}$ provide us a pair 
\[(\textbf{r}_i, \textbf{r}_i^c) \in E \times E^*\;\;such\;\;that\;\; \left<\textbf{r}_i|\textbf{r}_i^c\right>=0\]
which embeds $\mathcal{X}_{\operatorname{PGL}_n,D_k}$ into the subspace of $(E\times E^*)^{k(n-1)}/\operatorname{GL}_n$ subject to $\left<\textbf{r}_i|\textbf{r}_i^c\right>=0$. The induced Poisson structure on $\mathcal{X}_{\operatorname{PGL}_n,D_k}$ from the swapping bracket does not depend on the choice of bases of the flags.}

Here each $\textbf{r} \in m_b$ is related to $n-1$ elements in $E\times E^*$.
Therefore we define 
\[\mathcal{P}= \{s_{n-1}\prec \cdots \prec s_{1} \prec w_{n-1} \prec \cdots \prec w_{1} \prec \cdots \prec t_{n-1} \prec  \cdots \prec t_{1} \prec s_{n-1}\}\]
on $S^1$, where each $\textbf{r}\in m_b$ corresponds to $n-1$ anticlockwise ordered points $r_{n-1},\dots, r_1$ nearby in $\mathcal{P}$ as in Figure \ref{Figure:homo}.

Suppose that $V$ is a vertex of the $n$-triangulation $\mathcal{T}_n$ related to the marked triangle $(\textbf{x},\textbf{y},\textbf{z})$ of the ideal triangulation $\mathcal{T}$ and a triple of non-negative integers $(m,l,p)$ with $m+l+p=n$. Choose some bases 
\[\{\textbf{x}_1,\cdots,\textbf{x}_n\},\;\;\{\textbf{y}_1,\cdots,\textbf{y}_n\},\;\;\{\textbf{z}_1,\cdots,\textbf{z}_n\}.\]
for the flags $\xi(\textbf{x})$, $\xi(\textbf{y})$, $\xi(\textbf{z})$ respectively. Fix a volume form $\Omega$ of $E$. Let
\[\Delta_V =  \Omega\left(\textbf{x}_1\wedge \cdots \wedge \textbf{x}_{m}\wedge \textbf{y}_1\wedge \cdots \wedge \textbf{y}_{l} \wedge \textbf{z}_1\wedge \cdots \wedge \textbf{z}_{p}\right).\]
Let $\mathcal{FA}_n$ be the fraction ring generated by all these determinants with the fixed bases of flags.

For any $d>1$ and any $x_1,\cdots,x_d, y_1, \cdots, y_d \in \mathcal{P}$, let us adopt the notation
\begin{equation}
\label{equation:notation1}
\Delta\left((x_1,\cdots,x_d), (y_1, \cdots, y_d)\right): = \det   \left(\begin{array}{cccc}
       x_1 y_1 & \cdots & x_1 y_d \\
       \cdots & \cdots & \cdots \\
       x_d y_1 & \cdots & x_d y_d
     \end{array}\right) \in \mathcal{Z}_n(\mathcal{P}).
\end{equation}

Fixed a choice of distinct $u_1,\cdots, u_n \in \mathcal{P}$, the homomorphism $\chi_n$ from $\mathcal{FA}_n$ to $\mathcal{Q}_n(\mathcal{P})$ is defined by extending the following formula on the generators to $\mathcal{FA}_n$ using Leibniz's rule
\begin{equation}
\label{equation:chi}
\chi_n(\Delta_V) = \Delta\left(\left(x_1,\cdots, x_m, y_1,\cdots, y_l, z_1,\cdots, z_p\right), \left(u_1,\cdots u_n\right)\right).
\end{equation}
We define the homomorphism $\theta_{\mathcal{T}_n}$ from $\mathcal{FX}(\mathcal{T}_n)$ to $\mathcal{B}_n(\mathcal{P})$ by restricting the homomorphism $\chi_n$ to the fraction ring $\mathcal{FX}(\mathcal{T}_n)$. Then the homomorphism $\theta_{\mathcal{T}_n}$ does not depend on the choice of bases of flags and the choice of distinct $u_1,\cdots, u_n \in \mathcal{P}$. More explicitly, we have 
\[\theta_{\mathcal{T}_n}(X_V) =\chi_n(X_V)= \chi_n( \prod_W \Delta_W^{\varepsilon_{VW}}).\] 
\begin{thm}
{\sc[Main result Theorem \ref{thmm}]}
Given an ideal triangulation $\mathcal{T}$ of $D_k$, there is an injective Poisson homomorphism $\theta_{\mathcal{T}_n}$ from the rank $n$ Fock--Goncharov algebra for the moduli space $\mathcal{X}_{\operatorname{PGL}_n,D_k}$ to the rank $n$ swapping multifraction algebra $(\mathcal{B}_n(\mathcal{P}), \{\cdot,\cdot\})$, with respect to the Goldman Poisson bracket and the swapping bracket.
\end{thm}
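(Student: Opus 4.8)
The plan is to realize the desired map as the pullback $(\phi\circ\tau)^{*}$ along $\phi\circ\tau\colon M_s'\to\operatorname{Conf}_{k,n}$, and then verify three things: that its image lands in the subalgebra $\mathcal{B}_n(\mathcal{P})$, that it is injective, and that it intertwines the Fock--Goncharov and swapping brackets.

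\emph{Target of a coordinate.} Fix $\mathcal{T}$ and its refinement $\mathcal{T}_n$. Each $a_I=\Omega(\hat x^{i}\wedge\hat y^{j}\wedge\hat z^{k})$ attached to a vertex $I$ lying in a triangle $t=XYZ$ is an $(n\times n)$-determinant in the basis vectors of the three flags at the corners of $t$; under $\phi\circ\tau$ the $\ell$-th basis vector of the flag at a given corner is the vector $v_p$ of the corresponding point $p\in\mathcal{P}$. Hence $X_I=\prod_J a_J^{\epsilon_{IJ}}$ pulls back to a Laurent monomial in $(n\times n)$ vector-determinants. Because $X_I$ is a well-defined regular function on $\mathcal{X}_{\operatorname{PGL}(n,\mathbb{K}),D_k}$, i.e.\ invariant under rescaling the basis vectors of each flag independently, the exponent vector $(\epsilon_{IJ})_J$ obeys the corresponding balance conditions; I would use these to regroup the monomial into a product of ratios $\det(C,a)/\det(C,a')$ whose two determinants share $n-1$ columns $C$, and then, by inserting a common right $n$-tuple of points of $\mathcal{P}$, into genuine $(n\times n)$-determinant quotients $\Delta((\,\cdot\,),(u_1,\dots,u_n))/\Delta((\,\cdot\,),(u_1,\dots,u_n))$ in $\mathcal{Q}_n(\mathcal{P})$.

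\emph{Landing in $\mathcal{B}_n(\mathcal{P})$.} By the observation quoted above, in the rank $n$ swapping algebra such a determinant quotient is independent of the right $n$-tuple $(u_1,\dots,u_n)$; the content of the proof is to choose these tuples carefully---crucially using that $\{pp\}=0$ for every $p\in\mathcal{P}$---so that each auxiliary determinant collapses to a monomial in the generators $\{xy\}$ and all spurious factors cancel, exhibiting $X_I$ as a product of the cross-ratio generators $\frac{xz}{xt}\cdot\frac{yt}{yz}$ of $\mathcal{B}_n(\mathcal{P})$. I would treat the edge coordinates first (their minors already single out one vertex of each adjacent triangle, so the rewriting is essentially the rank $2$ computation performed in one slot) and then the triple-ratio coordinates of a triangle. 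This is the step requiring the most bookkeeping: the choices of right $n$-tuples for the various coordinates must be made coherently, since the Poisson step needs one fixed presentation of the whole family $\{X_I\}$, and one must check that the balance conditions on $(\epsilon_{IJ})$ are precisely what permit the grouping into determinant quotients.

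\emph{Injectivity and the Poisson property.} The map $\phi\circ\tau$ is dominant onto $\operatorname{Conf}_{k,n}$ (a generic tuple of vectors yields a generic configuration of flags, the covectors being free), so $(\phi\circ\tau)^{*}$ is injective on rational functions; as the $X_I$ are coordinates on the split torus $\mathbb{T}_{\mathcal{T}}$ they generate a polynomial subring of the function field, so $(\phi\circ\tau)^{*}$ restricted to $\mathcal{E}(\mathcal{T}_n)$ is injective. For the bracket one must show $\{(\phi\circ\tau)^{*}X_I,(\phi\circ\tau)^{*}X_J\}=\epsilon_{IJ}\,(\phi\circ\tau)^{*}(X_IX_J)$. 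I would first prove a log-canonical lemma: the swapping bracket of two of the above products of cross ratios is a scalar multiple of their product, the scalar being an explicit sum of linking numbers $\mathcal{J}$ of the involved points of $\mathcal{P}$; this comes from Leibniz's rule, the defining formula $\{rx,sy\}=\mathcal{J}(rx,sy)\,ry\,sx$, and the same cancellations used above. It then remains to match combinatorics: place the points of $\mathcal{P}$ on their circle in the cyclic order dictated by $\mathcal{T}$ and check that the resulting linking-number count equals $\epsilon_{IJ}$---separately for two vertices in a common triangle, a vertex on an edge versus a vertex in an adjacent triangle, and vertices with disjoint support, where both sides vanish. The genuinely delicate point, and the main obstacle, is the interface between the two previous steps: the choice of right $n$-tuples must be rigid enough that every $X_I$ maps manifestly to a product of cross-ratio generators---so the bracket stays log-canonical rather than producing an uncontrolled sum---yet uniform enough to be made at once for all $I$; and the final linking-number bookkeeping, matching an a priori swapping-algebra count to the cluster-quiver count $\epsilon_{IJ}$ across triangle interiors and shared edges, is where the bulk of the work lies.
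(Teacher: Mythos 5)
Your outline is the paper's outline: send each Fock--Goncharov coordinate to a product of $(n\times n)$-determinant quotients $E(\cdots\mid\cdots)$ (Definitions \ref{defndq}, \ref{defnhom}), use the right-$n$-tuple independence (Lemma \ref{lemma Pl}, Corollary \ref{coorep}) to free the choice of auxiliary columns, obtain injectivity from the retraction back to functions on $\operatorname{Conf}_{k,n}$ (your ``dominance of $\phi\circ\tau$'' is exactly the paper's $k_n\circ\theta_{\mathcal{T}_n}=\operatorname{Id}$, Proposition \ref{propkn}), and reduce the Poisson identity to a case-by-case linking-number count against $\epsilon_{IJ}$.

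The one place you have the mechanism wrong is the ``log-canonical lemma.'' The swapping bracket of two $n\times n$ determinants is, by Leibniz (Lemma \ref{lemcalMB}), a sum of $n^2$ terms $(-1)^{s+t}A_{st}\{a_sb_t,B\}$, and each $\{a_sb_t,B\}$ is itself a sum of determinants (Lemma \ref{swcal}); this is emphatically not log-canonical in the plain swapping algebra, and $\{pp\}=0$ is not what rescues it. What kills the off-diagonal terms is the rank-$n$ ideal: determinants with a repeated left-side entry vanish in $\mathcal{Z}_n(\mathcal{P})$. Even then this only works under a rigid arrangement of the data --- the two left triples $x^m,y^l,z^p$ and $x^{m'},y^{l'},z^{p'}$ drawn from the same blocks of $\mathcal{P}$, the two right tuples $v^n,u^n$ placed in disjoint arcs as in Figure \ref{swapother4}, and, crucially, the hypothesis $l\geq l'$ or $p\leq p'$, without which some cross terms survive. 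The payoff is Proposition \ref{propxyz}'s closed formula $\bigl[\Delta((x^m,y^l,z^p),v^n),\Delta((x^{m'},y^{l'},z^{p'}),u^n)\bigr]=\tfrac12\min\{m,m'\}-\tfrac12\min\{l,l'\}-\tfrac12\min\{p,p'\}$, and the proof of Theorem \ref{thmm} is an orchestration of right-tuple choices so that every pair of Fock--Goncharov coordinates falls into a configuration where this formula (or a short variant of its derivation) applies. That orchestration --- and the discovery of the $\min$-formula together with its ordering hypothesis --- is where the actual work is; you have correctly located it, but the route you sketch (Leibniz, linking numbers, $\{pp\}=0$) would not by itself collapse the bracket.
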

The above theorem generalizes the result for $n=2$ in \cite{Su17}, for $n=3$ in Chapter $3$ of \cite{Su14}. Combining this theorem with the main result in \cite{L18}, we again relate the Fock--Goncharov Poisson structure with the Goldman Poisson structure.

To prove the main result, we introduce the quotient of two $(n\times n)$-determinants $$\frac{\Delta\left(\left(x_1,\cdots, x_{n}\right), \left(u_1,\cdots, u_n\right)\right)}{\Delta\left(\left(y_1, \cdots,y_{n}\right), \left(u_1,\cdots, u_n\right)\right)}$$ with the same right side $n$-tuple of distinct points $(u_1,\cdots,u_n)$, called the $(n\times n)$-{\em determinant ratio} in the field of fractions $\mathcal{Q}_n(\mathcal{P})$ of $\mathcal{Z}_n(\mathcal{P})$. It has the nice property that it does not depend the choice of $(u_1,\cdots,u_n)$, due to the $R_n(\mathcal{P})$ relations. Then we realize that any $\theta_{\mathcal{T}_n}(X_V)$ is a product of two or three $(n\times n)$-determinant ratios which can be represented by two or three oriented edges of $\mathcal{T}_n$ as in Figure \ref{imte}. Such $(n \times n)$-determinant ratio is called {\em oriented edge ratio}. As a consequence $\theta_{\mathcal{T}_n}$ does not depend on the choice of $u_1,\cdots, u_n \in \mathcal{P}$. 

By Lemma \ref{swcal}, the swapping bracket
\[\{ab,\Delta((x_1, \cdots, x_{n}), (y_1,\cdots, y_{n}))\}=\Delta^R(ab)=\Delta^L(ab)\]
can be expressed in two different ways regarding to the right or left side of $\overrightarrow{ab}$ in Figure \ref{swapabdet1}. Then we compute the swapping bracket between two $(n\times n)$-determinants in our main proposition \ref{propxyz}, which is the most technical part for proving the main theorem. Let us fix some notations 
\begin{equation}
\label{equation:notation2}
[A,B]:=\frac{\{A,B\}}{AB},\;\; w^i:=w_1,\cdots,w_i.
\end{equation}
We stress the fact that the formula
\begin{eqnarray*}
&&\left[\Delta\left(\left(x^m, y^l, z^p \right), \left(v^n\right)\right), \Delta\left(\left(x^{m'}, y^{l'}, z^{p'}\right), \left(u^n\right)\right)\right]
\\&=& \frac{1}{2}\cdot \min\{m,m'\}- \frac{1}{2}\cdot \min\{l, l'\} - \frac{1}{2}\cdot \min\{p, p'\}
\end{eqnarray*}
in Proposition \ref{propxyz} strictly depends on the cyclic order in Figure \ref{Figure:swapother4} and the condition (*) $l\geq l'$ or $p \leq p'$ is strict. Essentially, the $+$ and $-$ sign before $\frac{1}{2}\cdot \min$ is due to our cyclic order.  Then we obtain Proposition \ref{propdr}, which shows that the $[\cdot,\cdot]$ bracket between any two oriented edge ratios belongs to $\{-\frac{1}{2},0,\frac{1}{2}\}$ and only depends the corresponding positions of two oriented edge ratios as in Figure \ref{der1}, \ref{der2}. Our oriented edge ratios correspond to the generalized Kashaev coordinates \cite{K98} \cite{Ki16}, but their Poisson bracket is different from the swapping bracket for two oriented edges lying on two different ideal triangles. 
In the proof of Proposition \ref{propdr}, we choose the right side $n$-tuples wisely for the $(n\times n)$-determinant ratios in each case in order to use Proposition \ref{propxyz} under the condition (*). Finally by checking all the cases, we finish the proof of the main theorem. 
\subsection{Compatible}
For $D_k$, we can transform any ideal triangulation $\mathcal{T}$ to any other ideal triangulation $\mathcal{T}'$ by a finite sequence of flips. In Proposition \ref{proposition:T}, we prove that the corresponding two injective Poisson homomorphisms $\theta_{\mathcal{T}_n}$ and $\theta_{\mathcal{T}_n'}$ are compatible by a generalized Pl\"ucker relation in $\mathcal{Z}_n(\mathcal{P})$. It is realted to the result for $n=2$ in \cite[Lemma 5.5]{Su17} where the cross fractions are used to define the homomorphism $\theta_{\mathcal{T}_n}$. As a corollary, the rank $n$ Fock--Goncharov Poisson bracket does not depend on the ideal triangulation $\mathcal{T}$. Note that these properties only become possible after dividing $(\mathcal{Z}(\mathcal{P}),\{\cdot,\cdot\})$ by $R_n(\mathcal{P})$. 
\subsection{From disk to surface}
\label{subsection:ds}
Let $\hat{S}= (S=S_{g,m},\emptyset)$ with $2g-2+m>0$. In this case, we obtain a homotopy equivalent surface $S'$ by shrinking holes on $S$ to punctures. The ideal triangulation of $\hat{S}$ is the ideal triangulation of $S'$ with vertices at the punctures. Let $\widetilde{\mathcal{T}}_n$ be all the lifts of the $n$-triangulation $\mathcal{T}_n$ into the universal cover of the surface $S'$. The {\em Farey set} $\mathcal{F}_\infty(S)$ is the countably infinite collection of vertices of $\widetilde{\mathcal{T}}$, equipped with a cyclic order on the boundary at infinity $\partial_\infty \pi_1(S')$. Let $\mathcal{P}$ be a cyclic subset of $S^1$ obtained by splitting each point of $\mathcal{F}_\infty(S)$ into $n-1$ points nearby in $S^1$ as we did for $D_k$. By our main theorem \ref{mainresult}, the injective homomorphism $\theta_{\widetilde{\mathcal{T}_n}}$ from $\mathcal{FX}(\widetilde{\mathcal{T}}_n)$ to $\mathcal{B}_n(\mathcal{P})$ is Poisson with respect to the rank $n$ Fock--Goncharov Poisson bracket and the swapping bracket. Thus the swapping bracket identifies with the rank $n$ Fock--Goncharov Poisson bracket on the universal cover $\operatorname{Conf}_{\mathcal{F}_\infty(S),n} \cong \mathcal{B}^{\mathcal{F}_\infty(S)}/\operatorname{PGL}_n$. Moreover, $\pi_1(S)$ acts on both $\mathcal{FX}(\widetilde{\mathcal{T}}_n)$ and $\theta_{\widetilde{\mathcal{T}_n}}(\mathcal{FX}(\widetilde{\mathcal{T}}_n))$ through the $\pi_1(S)$ action on the Farey set $\mathcal{F}_\infty(S)$, thus $\theta_{\widetilde{\mathcal{T}}_n}$ is $\pi_1(S)$-equivariant with respect to these actions. By \cite[Lemma 1.1]{FG06}, $\mathcal{X}_{\operatorname{PGL}_n,\hat{S}}\cong \operatorname{Conf}_{\mathcal{F}_\infty(S),n}^{\pi_1(S)}$. Then the rank $n$ Fock--Goncharov Poisson bracket on $\mathcal{X}_{\operatorname{PGL}_n,\hat{S}}$ is induced by the $\pi_1(S)$-equivariant homomorphism $\theta_{\widetilde{\mathcal{T}}_n}$ from the swapping bracket. This construction also works for $\hat{S}= (S,m_b)$ where $m_b \subset \partial S$ is a finite set since the cyclic order on the boundary at infinity $\partial_\infty \pi_1(S')$ induces a cyclic order on every lift of a boundary component containing marked points in $m_b$.
\subsection{From cross fractions to $(n\times n)$-determinant ratios}
Instead of characterizing the weak cross ratio by the cross fraction in the homomorphism $\tau_n$, we use a product of two $(n\times n)$-determinant ratios to characterize the weak cross ratio. By Theorem \ref{theorem:cfd}, such characterization is compatible with the former with respect to the swapping bracket. 
\subsection{Summary, further development}
Using $(n\times n)$-determinant ratios instead of cross fractions, we provide the following understanding of ``the space of all cross ratios'' proposed by Labourie. Using the swapping bracket, we define the Poisson structure on a subspace of $(\mathbb{K}^n \times \mathbb{K}^{n*})^{\#\mathcal{P}}/\operatorname{GL}(n,\mathbb{K})$. It induces a Poisson bracket $\omega_{\mathcal{SW}}$ on the sub fraction ring $\mathcal{DR}(\mathcal{FR}_n)$ of functions of $\mathcal{FR}_n$ (space of hyperconvex Frenet curves up to projective transformations) generated by all elements corresponding to $(n\times n)$-determinant ratios. By our main theorem, the Fock--Goncharov coordinate fraction ring of $\mathcal{X}_{\operatorname{PGL}_n,\hat{S}}$ is Poisson embedded into $\mathcal{DR}(\mathcal{FR}_n)$ with respect to the Goldman Poisson structure and $\omega_{\mathcal{SW}}$. On the other hand, combing \cite[Theorem 10.7.2]{L18} and Theorem \ref{theorem:cfd}, the fraction ring of acceptable
observables on the space $Opers_n$ of $\operatorname{SL}(n,\mathbb{R})$-opers with trivial holonomy is also Poisson embedded into $\mathcal{DR}(\mathcal{FR}_n)$ with respect to the second Gel'fand-Dickey Poisson structure and $\omega_{\mathcal{SW}}$.

In a forthcoming paper \cite{Su}, we define the quantized rank $n$ swapping algebra $\mathcal{Z}_n^q(\mathcal{P})$ generated over $\mathbb{K}_q = \mathbb{K}[q,q^{-1}]$ by non commutative indeterminates. Given any ideal triangulation $\mathcal{T}$, we give an injective homomorphism $\theta^q_{\mathcal{T}_n}$ from the quantized Fock--Goncharov coordinate fraction algebra \cite{FG09} for $\mathcal{X}_{\operatorname{PGL}_n,D_k}$ to the quantized rank $n$ swapping multifraction algebra $\mathcal{B}_n^q(\mathcal{P})$. Moreover, we show that any two homomorphisms $\theta^q_{\mathcal{T}_n}$ and $\theta^q_{\mathcal{T}'_n}$ are compatible using Skein relations.

Moreover, we suggest the following research directions.
\begin{enumerate}
\item In the conference organized by Goldman at Maryland University in 2016, Labourie \cite{L} described a compactification of the Hitchin component $H_n(S)$ using a tropical version of cross ratio and the rank $n$ swapping algebra. It is interesting to relate this compactification to the compactification of cluster $\mathcal{X}$ variety at infinity in \cite{FG16} through the injective Poisson homomorphism $\theta_{\mathcal{T}_n}$. 
\item It is interesting to investigate the Fock--Goncharov Poisson structures for the surface $S$ with bordered cusps as in \cite{CM17}, \cite{CMR17} via the rank $n$ swapping algebra, where they build very interesting links with Painlev\'e-type equations.
\end{enumerate}

\section{Rank $n$ swapping algebra}
\label{section:rnswap}
In this section, we recall the swapping algebra \cite{L18} and the rank $n$ swapping algebra \cite{Su17}. Lemma \ref{swcal} (\cite[Lemma 3.5, Remark 3.6]{Su17}) is the key technical formula to use for proving our main proposition \ref{propxyz}.
\subsection{Swapping algebra}
\label{sa}
\begin{defn}\label{defnlkn}
{\sc[linking number]}
Let $(r, x, s, y)$ be a quadruple of points in $\mathcal{P}\subset S^1$. We represent an ordered pair $(r, x)$ of $\mathcal{P}$ by the expression $rx$. Let $o$ be any point different from $r,x,s,y \in S^1$. 
Let $\sigma$ be a homeomorphism from $S^1\backslash o$ to $\mathbb{R}$ with respect to the anticlockwise orientation of $S^1$. Let $\triangle(a)= -1; 0; 1$ whenever $a < 0$; $a = 0$; $a > 0$ respectively.

The {\em linking number} between $rx$ and $sy$ is
\begin{equation}
\begin{aligned}
\label{equJ}
&\mathcal{J}(rx, sy) = \frac{1}{2}  \cdot \triangle(\sigma(r)-\sigma(x)) \cdot \triangle(\sigma(r)-\sigma(y)) \cdot \triangle(\sigma(y)-\sigma(x)) 
\\&- \frac{1}{2} \cdot \triangle(\sigma(r)-\sigma(x)) \cdot \triangle(\sigma(r)-\sigma(s)) \cdot \triangle(\sigma(s)-\sigma(x)).
\end{aligned}
\end{equation}

\end{defn}
\begin{figure}
\includegraphics[scale=0.5]{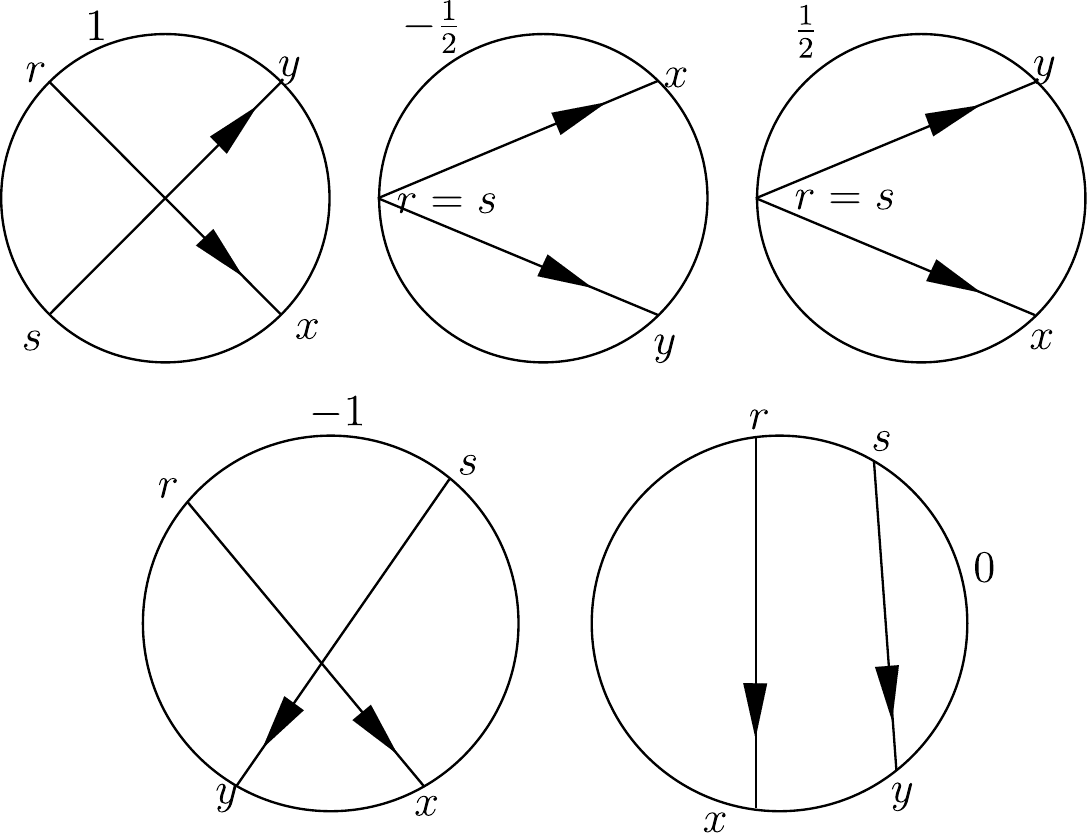}
\caption{Linking number $\mathcal{J}(rx, sy)$ between $rx$ and $sy$}
\label{swapswap1}
\end{figure}

In fact, the value of $\mathcal{J}(rx, sy)$ belongs to $\{0, \pm1, \pm \frac{1}{2}\}$, and does not depend on the choice of the point $o$ and depends only on the relative positions of $r,x,s,y$. In Figure \ref{swapswap1}, we describe five possible values of $\mathcal{J}(rx, sy)$.

For $\mathcal{P}$ a cyclic subset of $S^1$, we represent an ordered pair $(r, x)$ of $\mathcal{P}$ by the expression $rx$. Then we consider the associative commutative ring $$\mathcal{Z}(\mathcal{P}) := \mathbb{K}[\{xy\}_{\forall x,y \in \mathcal{P}}]/\{xx| \forall x \in \mathcal{P}\}$$ over a field $\mathbb{K}$ of characteristic zero, where $\{xy\}_{\forall x,y \in \mathcal{P}}$ are the set of variables.
Then we equip $\mathcal{Z}(\mathcal{P})$ with a swapping bracket, defined as follows.

\begin{defn}{\sc[swapping bracket \cite[$\alpha=0$ case]{L18}]}
The swapping bracket over $\mathcal{Z}(\mathcal{P})$ is defined by extending the following formula on arbitrary generators $rx$, $sy$ to $\mathcal{Z}(\mathcal{P})$ using {\em Leibniz's rule}
\[
\{rx, sy\} = \mathcal{J}(rx, sy) \cdot  \cdot ry \cdot sx.
\]
\end{defn}

By direct computations, Labourie proved the following theorem.
\begin{thm}
\label{swappoisson}
{\sc[Labourie \cite{L18}]} The swapping bracket is Poisson.
\end{thm}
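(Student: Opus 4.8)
The plan is to verify directly the three defining properties of a Poisson bracket---skew-symmetry, the Leibniz rule, and the Jacobi identity---using that $\{\cdot,\cdot\}_{c_1,c_2}$ is, by construction, the Leibniz extension of a prescribed function on pairs of generators of the commutative ring $\mathcal{Z}(\mathcal{P})$. Bilinearity is immediate. The Leibniz (biderivation) rule on all of $\mathcal{Z}(\mathcal{P})$ is then automatic once the extension is seen to be well defined, and it is: the value $\mathcal{J}(rx,sy)\cdot(c_1\cdot ry\cdot sx+c_2\cdot rx\cdot sy)$ depends only on the ordered pairs $(r,x),(s,y)$ and vanishes whenever $r=x$ or $s=y$, since $\mathcal{J}(xx,sy)=0$ by Definition~\ref{defnlkn}---this is exactly what allows the bracket to descend through the relations $\{xx\}$. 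Skew-symmetry on generators follows because the factor $c_1\cdot ry\cdot sx+c_2\cdot rx\cdot sy$ is invariant under the simultaneous exchange $r\leftrightarrow s$, $x\leftrightarrow y$ (commutativity of the ring), while $\mathcal{J}(rx,sy)=-\mathcal{J}(sy,rx)$, as one reads off from \eqref{equJ}; cf.\ Figure~\ref{swapswap1}. Skew-symmetry then propagates to all of $\mathcal{Z}(\mathcal{P})$ by a short induction on degree using the biderivation property.

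For the Jacobi identity it is convenient to split the bracket as $\{\cdot,\cdot\}_{c_1,c_2}=c_1\{\cdot,\cdot\}_{1,0}+c_2\{\cdot,\cdot\}_{0,1}$. The $(0,1)$-bracket $\{rx,sy\}_{0,1}=\mathcal{J}(rx,sy)\cdot rx\cdot sy$ is log-canonical on the generators $\{xy\}_{x\neq y}$ of $\mathcal{Z}(\mathcal{P})$, its structure constants forming the skew-symmetric constant array $\mathcal{J}$; any log-canonical bracket is Poisson, since its Jacobiator on generators is a cyclic sum of products $\mathcal{J}(\cdot,\cdot)\cdot\mathcal{J}(\cdot,\cdot)$ that cancels by skew-symmetry, exactly as for a constant-coefficient bracket in logarithmic coordinates. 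Because the Jacobiator is bilinear in the bracket, it then remains to show that $\{\cdot,\cdot\}_{1,0}$ is Poisson and that $\{\cdot,\cdot\}_{1,0}$ and $\{\cdot,\cdot\}_{0,1}$ are compatible, i.e.\ that their mixed Jacobiator vanishes. For both, I would use the standard fact that the Jacobiator $J(\alpha,\beta,\gamma):=\{\alpha,\{\beta,\gamma\}\}+\{\beta,\{\gamma,\alpha\}\}+\{\gamma,\{\alpha,\beta\}\}$ of a skew-symmetric biderivation is again a derivation in each argument---$J$ is alternating and $J(\alpha,\beta,\gamma\delta)=\gamma\,J(\alpha,\beta,\delta)+\delta\,J(\alpha,\beta,\gamma)$, the cross terms cancelling by skew-symmetry---and likewise for the mixed Jacobiator; hence it is enough to verify each identity on generators $ab$, $cd$, $ef$, that is, for arbitrary six (not necessarily distinct) points of $\mathcal{P}$.

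Expanding $\{cd,ef\}_{1,0}$, then applying $\{ab,\cdot\}_{1,0}$ via Leibniz, and summing cyclically, one obtains a sum of degree-three monomials in the generators whose coefficients are quadratic in the numbers $\mathcal{J}(\cdot,\cdot)$ attached to pairs drawn from $a,\dots,f$. Since $\mathcal{J}(rx,sy)$ depends only on the cyclic order of $(r,x,s,y)$ on $S^1$ together with the pattern of coincidences among its arguments, the identity $J(ab,cd,ef)=0$ reduces to a finite check over the cyclic-position-and-coincidence types of six points on the circle; the number of cases is cut down by the symmetries of the statement $J(ab,cd,ef)=0$---the $S_3$ permuting the pairs $ab$, $cd$, $ef$ (under which $J$ is alternating) together with the transpositions $a\leftrightarrow b$, $c\leftrightarrow d$, $e\leftrightarrow f$ within the pairs---so that only a few orbit representatives need be inspected, and the mixed Jacobiator is treated the same way. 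This casework---the ``direct computations'' of \cite{L12}---is the single laborious ingredient, and I expect it, rather than any of the formal reductions above, to be the main obstacle: the real task is to organize the finitely many configurations of six points on $S^1$ so that the cancellation of linking-number products becomes transparent.
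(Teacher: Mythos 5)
The paper does not actually supply a proof of this theorem: the sentence preceding it reads \emph{``By direct computations, F.\ Labourie proved the following theorem,''} with a citation to \cite{L12}. So there is no argument in the paper against which your plan can be checked; the comparison must be with what a complete proof would require.

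Your formal reductions are correct and well organized. The well-definedness of the Leibniz extension on the quotient $\mathcal{Z}(\mathcal{P}) = \mathbb{K}[\{xy\}]/(\{xx\})$ indeed rests on $\mathcal{J}(xx,sy)=0$, which follows at once from $\triangle(0)=0$ in \eqref{equJ}. Skew-symmetry on generators follows from $\mathcal{J}(rx,sy)=-\mathcal{J}(sy,rx)$ together with the invariance of $c_1\,ry\,sx+c_2\,rx\,sy$ under the double exchange; the standard computation that the Jacobiator of a skew biderivation is a (commutative-ring) derivation in each slot is also right, reducing everything to triples of generators. The decomposition $\{\cdot,\cdot\}_{c_1,c_2}=c_1\{\cdot,\cdot\}_{1,0}+c_2\{\cdot,\cdot\}_{0,1}$ and the observation that $\{\cdot,\cdot\}_{0,1}$ is a log-canonical bracket with constant skew structure array $\mathcal{J}$---hence automatically Poisson---is a genuinely useful reorganization. (One small slip: the Jacobiator is quadratic, not bilinear, in the bracket; the point you want is that the symmetric bilinear form obtained by polarization gives $J_{c_1A+c_2B}=c_1^2J_A+c_1c_2J_{A,B}+c_2^2J_B$, so validity for all $(c_1,c_2)$ is equivalent to $J_A=J_B=J_{A,B}=0$.)

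What is missing is exactly what you flag as ``the single laborious ingredient'': the actual vanishing of $J_{1,0}$ and of the mixed Jacobiator $J_{1,0\,,\,0,1}$ on triples of generators, which amounts to a cancellation of products $\mathcal{J}(\cdot,\cdot)\cdot\mathcal{J}(\cdot,\cdot)$ over the cyclic-position-and-coincidence types of six points on $S^1$. This is not a corner of the proof; it \emph{is} the theorem. For $\{\cdot,\cdot\}_{1,0}$ the output monomials are not of the simple form $ab\cdot cd\cdot ef$ (the points get shuffled across pairs: e.g.\ $\{ab,\{cd,ef\}\}$ produces terms like $af\cdot cb\cdot ed$), so the bookkeeping of which monomials can cancel, and under which coincidence patterns, is substantially more delicate than for the log-canonical piece. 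Until that case analysis is written out---either directly or by reducing the number of representatives via the symmetries you identify---you have a correct \emph{plan} together with a proof that $\{\cdot,\cdot\}_{0,1}$ is Poisson, but not a proof of the theorem.
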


Let $\mathcal{Q}(\mathcal{P})$ be the field of fractions of $\mathcal{Z}(\mathcal{P})$. We extend the swapping bracket to $\mathcal{Q}(\mathcal{P})$ by 
 \[\{rx,\frac{1}{sy}\}= -\frac{\{rx,sy\}}{sy^2}.\]

\begin{defn}
The {\em cross fraction} determined by $(x,y,z,t)$ is the element 
\[\frac{xz}{xt} \cdot \frac{yt}{yz}.\]
Let $\mathcal{B}(\mathcal{P})$ be the subring of $\mathcal{Q}(\mathcal{P})$ generated by cross fractions.

 {\em The swapping fraction (multifraction resp.) algebra of $\mathcal{P}$} is the ring $\mathcal{Q}(\mathcal{P})$ ($\mathcal{B}(\mathcal{P})$ resp.) equipped with the swapping bracket, denoted by $(\mathcal{Q}(\mathcal{P}),\{\cdot, \cdot\})$ ($(\mathcal{B}(\mathcal{P}),\{\cdot, \cdot\})$ resp.).
\end{defn}
By \cite[Proposition 2.9]{Su17}, the ring $\mathcal{B}(\mathcal{P})$ is closed under $\{\cdot,\cdot\}$.

\subsection{Rank $n$ swapping algebra}
\label{rsa}

\begin{defn}{\sc[The rank $n$ swapping ring $\mathcal{Z}_n(\mathcal{P})$]}
Recall the notation in Equation (\ref{equation:notation1}). For $n\geq 2$, let $R_n(\mathcal{P})$ be the ideal of $\mathcal{Z}(\mathcal{P})$ generated by \\ $\left\{ D \in \mathcal{Z}(\mathcal{P}) \; |  \; D = \Delta\left((x_1,\cdots,x_{n+1}), (y_1, \cdots, y_{n+1})\right) ,  \forall x_1, , x_{n+1}, y_1,\cdots, y_{n+1} \in \mathcal{P} \right\}$.

The {\em rank n swapping ring} $\mathcal{Z}_n(\mathcal{P})$ is the quotient ring $\mathcal{Z}(\mathcal{P})/R_n(\mathcal{P})$.
\end{defn}

\begin{figure}
\includegraphics[scale=0.5]{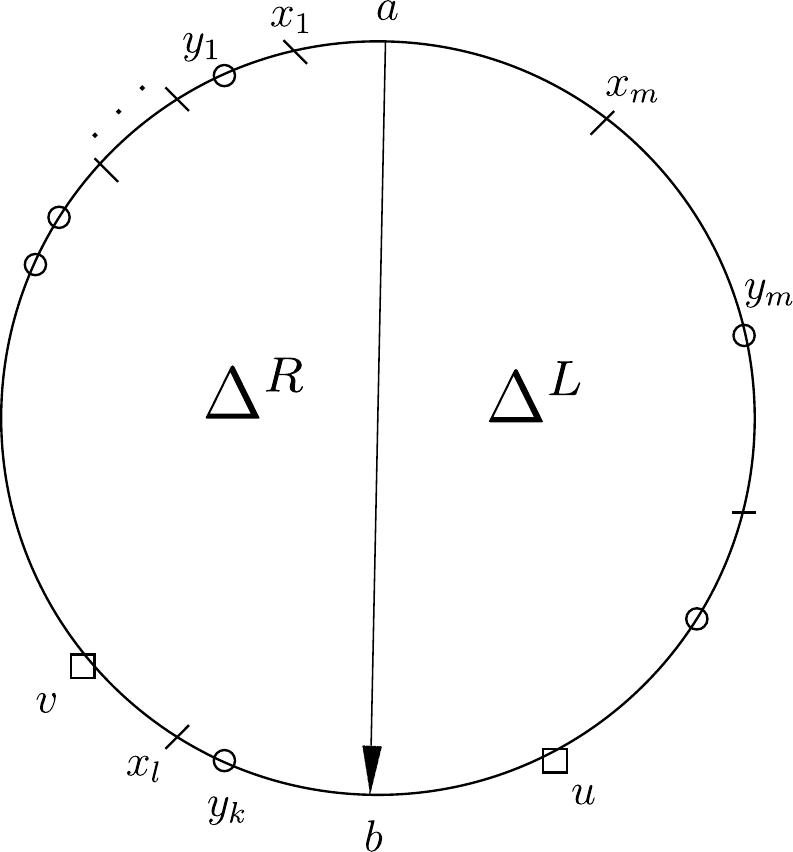}
\caption{$\{ab, \Delta((x_1, \cdots, x_{m}), (y_1,\cdots, y_{m}))\}$}
\label{swapabdet1}
\end{figure}

\begin{lem}{\sc[\cite[Lemma 3.5, Remark 3.6]{Su17}]}
\label{swcal}
For any integer $m \geq 2$, suppose $x_1,\cdots,x_{m}$ $(y_1,\cdots,y_{m} \; resp. )$ in $\mathcal{P}$ are mutually distinct and anticlockwise ordered ($m, x_i, y_i$ used here do not involve $m,x_i, y_i$ in any other places). Assume that $a,b$ belong to $\mathcal{P}$ and $x_{1},\cdots,x_{l}, y_{1},\cdots$, $y_{k}$ are on the \textbf{right} side of the oriented edge $\overrightarrow{ab}$ (include coinciding with $a$ or $b$) as illustrated in Figure \ref{swapabdet1}.
  Let $u$ ($v$ resp.) be strictly on the left (right resp.) side of $\overrightarrow{ab}$. Let
\begin{equation}
\begin{aligned}
\label{equation:R}
&\Delta^R(ab)= \sum_{d=1}^l \mathcal{J}(ab, x_d u)\cdot x_d b \cdot \Delta((x_1, \cdots,x_{d-1}, a, x_{d+1},\cdots,x_{m}), (y_1,\cdots, y_{m}))
  \\&+ \sum_{d=1}^k \mathcal{J}(ab, u y_d)\cdot a y_d \cdot \Delta((x_1, \cdots,x_{m}), (y_1,\cdots,y_{d-1}, b, y_{d+1},\cdots, y_{m})),
\end{aligned}
\end{equation}
\begin{equation}
\begin{aligned}
\label{equation:L}
&\Delta^L(ab)= \sum_{d=k+1}^{m} \mathcal{J}(ab, x_d v)\cdot x_d b \cdot \Delta((x_1, \cdots,x_{d-1}, a, x_{d+1},\cdots,x_{m}), (y_1,\cdots, y_{m}))
  \\&+ \sum_{d=l+1}^{m} \mathcal{J}(ab, v y_d)\cdot a y_d \cdot \Delta((x_1, \cdots,x_{m}), (y_1,\cdots,y_{d-1}, b, y_{d+1},\cdots, y_{m})),
\end{aligned}
\end{equation}

then we have
\[\{ab, \Delta((x_1, \cdots, x_{m}), (y_1,\cdots, y_{m}))\}
   = \Delta^R(ab)=\Delta^L(ab).
\]
\end{lem}

The following proposition is a consequence of the above lemma.
\begin{prop}
The ideal $R_n(\mathcal{P})$ is a Poisson ideal of $\mathcal{Z}(\mathcal{P})$ with respect to the swapping bracket.
\end{prop}

\begin{defn}{\sc[rank $n$ swapping algebra of $\mathcal{P}$]}
The {\em rank $n$ swapping algebra of $\mathcal{P}$} is the ring $\mathcal{Z}_n(\mathcal{P})=\mathcal{Z}(\mathcal{P})/R_n(\mathcal{P})$ equipped with the swapping bracket, denoted by $(\mathcal{Z}_n(\mathcal{P}),\{\cdot, \cdot\})$.
\end{defn}

By Theorem 4.7 in \cite{Su17}, $\mathcal{Z}_n(\mathcal{P})$ is an integral domain. Generators of $\mathcal{Z}_n(\mathcal{P})$ are non-zero divisors, so the cross fraction is well defined in the field of fractions of $\mathcal{Z}_n(\mathcal{P})$.

Let $\mathcal{Q}_n(\mathcal{P})$ be the field of fractions of $\mathcal{Z}_n(\mathcal{P})$. Let $\mathcal{B}_n(\mathcal{P})$ be the sub-fraction ring of $\mathcal{Z}_n(\mathcal{P})$ generated by cross fractions.

\begin{defn}
\label{defnsma}
Then, the {\em rank $n$ swapping fraction (multifraction resp.) algebra of $\mathcal{P}$} is $\mathcal{Q}_n(\mathcal{P})$ ($\mathcal{B}_n(\mathcal{P})$ resp.) equipped with the swapping bracket, denoted by $(\mathcal{Q}_n(\mathcal{P}),\{\cdot, \cdot\})$ ($(\mathcal{B}_n(\mathcal{P}),\{\cdot, \cdot\})$ resp.).
\end{defn}

\section{Fock--Goncharov coordinates}
\label{section:FG}
In this subsection, we explain explicitly the Fock--Goncharov coordinates for $\mathcal{X}_{\operatorname{PGL}_n,\hat{S}}$ in \cite{FG06}. 

Let $\hat{S}= (S,m_b)$ that admits an ideal triangulation, where $S$ is a compact oriented surface and $m_b$ is a finite collection of marked points on $\partial S$ considered modulo isotopy. Let $m_p$ be the set of punctures of $S$. 
An {\em ideal triangulation} $\mathcal{T}$ of $\hat{S}$ is a maximal collection of non-homotopic essential arcs joining points in $m_b \cup m_p$ which are pairwise disjoint on the interior parts.  

\begin{defn}{\sc[$\mathcal{X}_{\operatorname{PGL}_n,\hat{S}}$ \cite[Definition 2.1]{FG06}]}
A {\em $\operatorname{PGL}_n$-framed local system} is a pair $(\rho,\xi)$ where 
\begin{enumerate}
\item $\rho\in \operatorname{Hom}(\pi_1(S),\operatorname{PGL}_n)/\operatorname{PGL}_n$,
\item $\xi$ is a monodromy invariant map from $m_b \cup m_p$ to $\mathcal{B}$.
\end{enumerate}
The moduli space $\mathcal{X}_{\operatorname{PGL}_n,\hat{S}}$ is the collection of equivalent classes of the pairs with the equivalence relation $(\rho,\xi)\sim (g\circ \rho \circ g^{-1},g \circ \xi)$ for any $g \in \operatorname{PGL}_n$.
\end{defn}

\begin{defn}{\em [$n$-TRIANGULATION]}
\label{ntri}
For any triangulation $\mathcal{T}$, we denote its vertices by $V_{\mathcal{T}}$  and its edges by $E_{\mathcal{T}}$.
Given an ideal triangulation $\mathcal{T}$ of $\hat{S}$, we define the {\em $n$-triangulation} $\mathcal{T}_n$ of $\mathcal{T}$ to be: we subdivide each triangle of $\mathcal{T}$ into $n^2$ triangles as shown in Figure \ref{figurekgon}.
\begin{figure}[ht]
\includegraphics[scale=0.3]{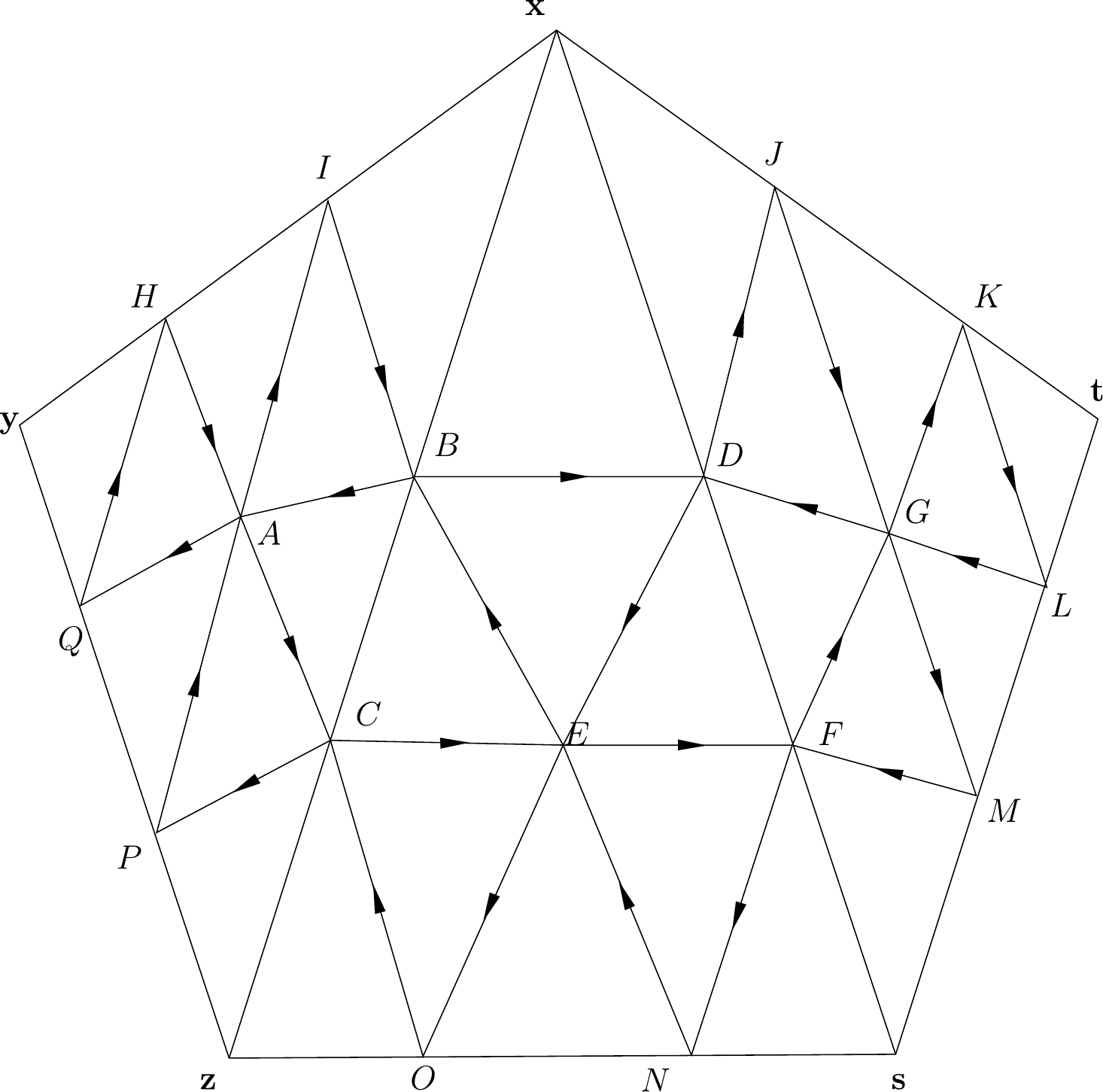}
\caption{$3$-triangulation of $D_5$}
\label{figurekgon}
\end{figure}
Let 
\[
 \mathcal{I}_n= \big(V_{\mathcal{T}_n} \backslash V_{\mathcal{T}}\big) \cap \big(\bigcup_{e\in E_{\mathcal{T}}} e \big),
\]
and
\[
 \mathcal{I}_n'= \left\{V \in \mathcal{I}_n \;|\; V \; is \; not \;on \;the\; boundary\; of \; \mathcal{T}\right\}.
\]
For the case in Figure \ref{figurekgon}, $\mathcal{I}_n' = \{B,C,D,F\}$. Any element $W \in \mathcal{I}_n$ is specified by an oriented edge $\overrightarrow{\textbf{u} \textbf{v}}$ and a pair of positive numbers $(m,l)$ with $m+l = n$ where $m$ is the least number of edges between $W$ and $\textbf{v}$ in $E_{\mathcal{T}_n}$. We also denote $W$ by $v_{\textbf{u}, \textbf{v}}^{m,l}$. For example, the vertex $B=v_{\textbf{x}, \textbf{z}}^{2,1}$ is specified by $\overrightarrow{\textbf{x}\textbf{z}}$ and $(2,1)$.

A {\em marked triangle} $(\textbf{u},\textbf{v},\textbf{w})$ is a triangle $\overline{\textbf{u}\textbf{v}\textbf{w}}$ with a mark on each vertex. 

Let \[\mathcal{J}_n = V_{\mathcal{T}_n} \backslash \big(\mathcal{I}_n \cup V_{\mathcal{T}}\big).\]
For the case in Figure \ref{figurekgon}, $\mathcal{J}_n=\{A,E,G\}$.
Any element $U \in \mathcal{J}_n$ is specified by a marked triangle $(\textbf{u},\textbf{v},\textbf{w})$ and a triple of positive numbers $(m,l,p)$ with $m+l+p =n$ where $m$ ($l$ resp.) is the least number of edges between $U$ and $\overline{\textbf{v}\textbf{w}}$ ($\overline{\textbf{u}\textbf{w}}$ resp.) in $E_{\mathcal{T}_n}$. We also denote $U$ by $v_{\textbf{u},\textbf{v},\textbf{w}}^{m,l,p}$. For example, the vertex $A=v_{\textbf{x},\textbf{y},\textbf{z}}^{1,1,1}$ is specified by $(\textbf{x},\textbf{y},\textbf{z})$ and $(1,1,1)$.

With respect to the orientation of $S$, we define a {\em quiver} $\Gamma_{\mathcal{T}_n}$ with vertices $\mathcal{I}_n\cup \mathcal{J}_n $ and oriented edges as in Figure \ref{figurekgon}.
\end{defn}

\begin{defn}{\sc[Flags]}
Let $E$ be a $n$-dimensional vector space and let $\Omega$ be a volume form of $E$. A {\em flag} is a nested sequence of vector subspaces in $E$
\[F:= F_0\subset F_1 \subset \cdots \subset F_{n-1}\subset F_n=E,\;\; \dim F_i=i\]
equipped with the volume form $\Omega$.

A {\em basis of a flag} $F$ is a basis $\{f_1,\cdots,f_n\}$ of $E$ such that the vectors $f_1,\cdots,f_i$ span the vector space $F_i$ for $i=1,\cdots,n$. 

The {\em flag variety} $\mathcal{B}$ is the collection of the flags.

\end{defn}

\begin{notation}
When $X$ is $\mathcal{T}$ or $\mathcal{T}_n$ or $m_b \cup m_p$ or $\mathcal{I}_n\cup \mathcal{J}_n$, we use $\widetilde{X}$ for denoting all its lifts in the universal cover $\widetilde{S}$. 
\end{notation}

The construction of Fock--Goncharov $\mathcal{X}$ coordinates are based on Lusztig's theory of total positivity \cite{Lu94}\cite{Lu98}.

\begin{defn}{\sc[Fock--Goncharov coordinates \cite[Section 9]{FG06}]}
\label{definition:fgcoor}
Fix an ideal triangulation $\mathcal{T}$ of $\hat{S}$ and its $n$-triangulation $\mathcal{T}_n$.
Given a framed local system $(\rho, \xi)$ of $\mathcal{X}_{\operatorname{PGL}_n,\hat{S}}$, by Deck transformation, we get a $\rho$-equivariant map $\xi_\rho$ from $\widetilde{m_b \cup m_p}$ in the universal cover $\widetilde{S}$ to the flag variety $\mathcal{B}$. For any vertex $\widetilde{V} \in \widetilde{\mathcal{I}_n\cup \mathcal{J}_n}$ of $\widetilde{\mathcal{T}_n}$, suppose that $\widetilde{V}$ is specified by a marked triangle $(\textbf{f},\textbf{g},\textbf{h})$ of $\widetilde{\mathcal{T}}$ and a triple of non-negative integers $(m,l,p)$ with $m+l+p=n$. 
Let
\[\{\textbf{f}_1,\cdots,\textbf{f}_{n}\}, \;\;\{\textbf{g}_1,\cdots,\textbf{g}_{n}\},\;\;\{\textbf{h}_1,\cdots,\textbf{h}_{n}\}\]
be the bases of the three flags $\xi_\rho(\textbf{f})$, $\xi_\rho(\textbf{g})$, $\xi_\rho(\textbf{h})$ respectively. Recall $\Omega$ is the fixed the volume form of $E$. Fix the notation
\[\textbf{v}^i:= \textbf{v}_1 \wedge \cdots \wedge \textbf{v}_i.\] 
We define
$$\Delta_{\widetilde{V}} = \Omega\left(\textbf{f}^{m} \wedge \textbf{g}^{l} \wedge \textbf{h}^{p}\right) .$$ 
Let $\mathcal{FA}_n$ be the fraction ring generated by these determinants over the field $\mathbb{K}$ of characteristic zero with respect to a choice of bases of flags.

For any $V \in \mathcal{I}_n' \cup \mathcal{J}_n$, we choose one of its lift $\widetilde{V}$ in the universal cover $\widetilde{S}$. For all the lifts of the quiver $\Gamma_{\mathcal{T}_n}$ into $\widetilde{S}$, we define $\widetilde{\varepsilon}_{\widetilde{V}\widetilde{W}}$ for any $\widetilde{V},\widetilde{W} \in \widetilde{\mathcal{I}_n\cup \mathcal{J}_n}$ by 
\begin{equation}
\label{equation:epslion}
\widetilde{\varepsilon}_{\widetilde{V}\widetilde{W}} =  \#\{\;arrows \;from\; \widetilde{V} \; to \; \widetilde{W}\;\} -\# \{\;arrows \;from\; \widetilde{W} \; to \; \widetilde{V}\;\}
\end{equation}
as in Figure \ref{figurekgon}. We define
\[X_{V} = \prod_{\widetilde{W}} \Delta_{\widetilde{W}}^{\widetilde{\varepsilon}_{\widetilde{V}\widetilde{W}}},
\]
which does not depend on the lift and the bases of flags that we choose. The collection of $\{X_V\}_{V\in \mathcal{I}'_n\cup \mathcal{J}_n}$ parametrizes $\mathcal{X}_{\operatorname{PGL}_n,\hat{S}}$. 

Let $\mathcal{FX}(\mathcal{T}_n)$ be the fraction ring generated by $\{X_V\}_{V \in \mathcal{I}_n'\cup \mathcal{J}_n}$ over the field $\mathbb{K}$ of characteristic zero. Then $\mathcal{FX}(\mathcal{T}_n) \subset \mathcal{FA}_n$.

\end{defn}

\begin{remark}
\label{remark:FGcoord}
More explicitly, for $V \in \mathcal{J}_n$ corresponding to a marked triangle $(\textbf{f},\textbf{g},\textbf{h})$ of $\widetilde{\mathcal{T}}$ and a triple of positive integers $(m,l,p)$ with $m+l+p=n$, the Fock--Goncharov $\mathcal{X}$ coordinate at $V$, also called the {\em triple ratio}, is 
\[X_V=\frac{\Omega\left(\textbf{f}^{m+1} \wedge \textbf{g}^{l} \wedge \textbf{h}^{p-1}\right)}{\Omega\left(\textbf{f}^{m+1} \wedge \textbf{g}^{l-1} \wedge \textbf{h}^{p}\right)}\cdot \frac{\Omega\left(\textbf{f}^{m-1} \wedge \textbf{g}^{l+1} \wedge \textbf{h}^{p}\right)}{\Omega\left(\textbf{f}^{m} \wedge \textbf{g}^{l+1} \wedge \textbf{h}^{p-1}\right)} \cdot \frac{\Omega\left(\textbf{f}^{m} \wedge \textbf{g}^{l-1} \wedge \textbf{h}^{p+1}\right)}{\Omega\left(\textbf{f}^{m-1} \wedge \textbf{g}^{l} \wedge \textbf{h}^{p+1}\right)}.\]

For $V \in \mathcal{I}'_n$ corresponding to an oriented edge $\overrightarrow{\textbf{x}\textbf{z}}$ with two adjacent anticlockwise oriented ideal triangles $\overrightarrow{\textbf{x}\textbf{y}\textbf{z}}$ and $\overrightarrow{\textbf{x}\textbf{z}\textbf{t}}$, and a pair of positive integers $(m,n-m)$, the Fock--Goncharov $\mathcal{X}$ coordinate at $V$, also called the {\em edge function}, is
\[X_V= \frac{\Omega\left(\textbf{x}^{m} \wedge \textbf{z}^{n-m-1} \wedge \textbf{t}_1 \right)\cdot \Omega\left(\textbf{x}^{m-1} \wedge \textbf{y}_1 \wedge \textbf{z}^{n-m}\right)}{\Omega\left(\textbf{x}^{m}  \wedge \textbf{y}_1 \wedge \textbf{z}^{n-m-1}\right)\cdot \Omega\left(\textbf{x}^{m-1} \wedge \textbf{z}^{n-m} \wedge \textbf{t}_1\right)},
\]
Edge functions generalize Thurston's shear coordinates \cite{T86}.

In both cases, $X_V$ does not depend on the bases that we choose since each term like $\textbf{x}^{m}$ appears once in the numerator, once in the denominator.
Moreover, because $\Omega$ does not change under the projective transformations, $X_V$ is invariant by the projective transformations. So $X_V$ is a well-defined function on $\mathcal{X}_{\operatorname{PGL}_n,\hat{S}}$. 
\end{remark}

\begin{remark}
When the Riemann surface $S$ is closed, with the help of Lie group $G$ invariant functions, Goldman \cite{G86} studied the Hamiltonian flows on $\mathcal{R}_{G,S}$ where the twist flows are described explicitly. For the Hitchin component $H_3(S)$, the flows related to the Fock--Goncharov parameters are studied in \cite{G13} \cite{WZ17}. For the Hitchin component $H_n(S)$, the Hamiltonian flows related to these parameters are studied in \cite{SWZ17} \cite{SZ17}. 

The Fuchsian rigidity with respect to triple ratios (edge functions resp.) can be found in \cite{HS19}. 
\end{remark}

\begin{defn}{\sc[Rank $n$ Fock--Goncharov algebra]}
\label{defn:RnFGA}
Let $\mathcal{FX}(\mathcal{T}_n)$ be the fraction ring generated by $\{X_V\}_{V \in \mathcal{I}_n'\cup \mathcal{J}_n}$ over the field $\mathbb{K}$ of characteristic zero. The {\em rank $n$ Fock--Goncharov Poisson bracket $\{\cdot,\cdot\}_n$} is defined by extending to $\mathcal{FX}(\mathcal{T}_n)$ the following formula for any $V,W \in \mathcal{I}_n' \cup \mathcal{J}_n$ using Leibniz's rule:
\[\left\{X_V ,\; X_W \right\}_{n} =  \varepsilon_{V, W} \cdot X_V \cdot X_W,
\]
where $\varepsilon$ is defined in Equation \eqref{equation:eps}.

The {\em rank $n$ Fock--Goncharov algebra} of $\mathcal{T}_n$ is the ring $\mathcal{FX}(\mathcal{T}_n)$ equipped with the rank $n$ Fock--Goncharov Poisson bracket, denoted by $(\mathcal{FX}(\mathcal{T}_n), \{\cdot, \cdot\}_n)$.

\end{defn}

\begin{remark}
\label{ABGFG}
\begin{enumerate}
\item As shown in \cite[Section 15]{FG06}, the rank $n$ Fock--Goncharov Poisson bracket arises from a special $K_2$ class in $\mathcal{A}_{\operatorname{SL}_n,\hat{S}}$. It can be understood as a canonical Poisson bracket defined for a cluster $\mathcal{X}$ variety \cite{GSV03}.
\item Actually, the rank $n$ Fock--Goncharov Poisson bracket identifies with the Goldman Poisson structure through a different way of symplectic reduction. V. Fock and A. Rosly \cite{FR98} observed that the Goldman Poisson structure on $\mathcal{X}_{G,\hat{S}}$ can be obtained as a quotient of the space of graph connections by the Poisson action of a lattice gauge group endowed with a Poisson-Lie structure. When $G=\operatorname{PGL}(n,\mathbb{R})$, we can calculate Fock--Rosly Poisson bracket between any two Fock--Goncharov coordinates explicitly, which results in the rank $n$ Fock--Goncharov Poisson bracket. In \cite[part I Theorem 3.23]{N13}, Nie uses an approach---the quasi-Poisson structure \cite{AMM98} \cite{AKM02} that is equivalent to that of Fock and Rosly, to explicitly identify the Goldman Poisson structure with the Fock--Goncharov Poisson structure on $\mathcal{X}_{G,\hat{S}}$. 
\end{enumerate}
\end{remark}

\begin{thm}\label{thmcoor}
{\sc[V. V . Fock, A. B. Goncharov \cite[Theorem 1.11]{FG06}, \cite[Theorem 2.5]{FG04} for $n=3$]} Given an ideal triangulation $\mathcal{T}$ and its $n$-triangulation $\mathcal{T}_n$ of $\hat{S}$, the Fock--Goncharov $\mathcal{X}$ coordinates $\{X_V\}_{V \in \mathcal{I}_n'\cup \mathcal{J}_n}$ provide a positive regular atlas on $\mathcal{X}_{\operatorname{PGL}_n,\hat{S}}$.
\end{thm}

\section{$(n\times n)$-determinant ratio}
\label{section:nnratio}
In this section, we construct $(n\times n)$-determinant ratios in $\mathcal{Q}_n(\mathcal{P})$ and relate them with the rank $n$ Fock--Goncharov algebra.

Let us recall the geometric model for $\mathcal{Z}_n(\mathcal{P})$ from \cite[Section 4]{Su17}, which should always be kept in mind while we do the computations in the rank $n$ swapping algebra. Let $\mathcal{P}=\{x_1,\cdots,x_r\}$. We associate a pair $(\mathfrak{a}_i,\mathfrak{b}_i)\in \mathbb{K}^n \times \mathbb{K}^{n*}$ to $x_i$ for $i=1,\cdots,r$. We consider the space $D_{n,r} = (\mathbb{K}^n \times \mathbb{K}^{n*})^r$ of $r$ vectors $\mathfrak{a}_1,\cdots,\mathfrak{a}_r$ in $\mathbb{K}^n$ and $r$ covectors $\mathfrak{b}_1,\cdots,\mathfrak{b}_r$ in $\mathbb{K}^{n*}$. For any $g \in \operatorname{GL}(n,\mathbb{K})$, the action of $g$ on the vector $\mathfrak{a}_i$ is the left multiplication by $g$, the action of $g$ on the covector $\mathfrak{b}_i$ is the right multiplication by $g^{-1}$. We define the product between a vector $\mathfrak{a}_i$ in $\mathbb{K}^n$ and a covector $\mathfrak{b}_j$ in $\mathbb{K}^{n*}$ by $\left<\mathfrak{a}_i|\mathfrak{b}_j\right> := \mathfrak{b}_j(\mathfrak{a}_i)$, which is $\operatorname{GL}(n,\mathbb{K})$ invariant. Let us associate each $\left<\mathfrak{a}_i|\mathfrak{b}_j\right>$ to each ordered pair $x_i x_j \in \mathcal{Z}_n(\mathcal{P})$ as follows. Let $B_{n\mathbb{K}}$ be the subring of $\mathbb{K}[D_{n,r}]$ generated by $\{\left<\mathfrak{a}_i|\mathfrak{b}_j \right>\}_{i=1,j=1}^r$. C. D. Concini and C. Procesi \cite{CP76} proved that
$B_{n\mathbb{K}} = \mathbb{K}[D_{n,r}]^{\operatorname{GL}(n,\mathbb{K})}$. 

Let $W$ be the polynomial ring $\mathbb{K}[\{\mathtt{z}_{i,j}\}_{i,j=1}^r]$,
\[R = \{ f \in W \; |  \; f = \det \left(\begin{array}{lcr}
                                        \mathtt{z}_{i_1 ,j_1} & \cdots & \mathtt{z}_{i_1 ,j_{n+1}} \\
                                        \cdots & \cdots & \cdots \\
                                       \mathtt{z}_{i_{n+1}, j_1} & \cdots & \mathtt{z}_{i_{n+1}, j_{n+1}}
                                      \end{array} \right), \forall i_{k}, j_{l} = 1,\cdots,r \}.\]
Let $T$ be the ideal of $W$ generated by $R$. Then Weyl \cite{W39} show that $B_{n\mathbb{K}} \cong W/T$. Recall $\mathcal{P} = \{x_1,\cdots,x_r\}\subset S^1$. Let $S_{n\mathbb{K}}$ be the ideal of $B_{n\mathbb{K}}$ generated by $\{\left<a_i|b_i\right>\}_{i=1}^r$. Taking quotient by $S_{n\mathbb{K}}$, we identify $\left<\mathfrak{a}_i|\mathfrak{b}_j \right>$ with $x_i x_j$ through $\mathtt{z}_{i,j}$, where we identify $\mathfrak{a}_i$ with $x_i$ on the left and $\mathfrak{b}_j$ with $x_j$ on the right of ordered pair $x_i x_j$ in $\mathcal{Z}_n(\mathcal{P})$.

\begin{defn}
For any $d>1$ and any $x_1,\cdots,x_d, y_1, \cdots, y_d \in \mathcal{P}$, recall the notation
\begin{equation}
\Delta\left((x_1,\cdots,x_d), (y_1, \cdots, y_d)\right): = \det   \left(\begin{array}{cccc}
       x_1 y_1 & \cdots & x_1 y_d \\
       \cdots & \cdots & \cdots \\
       x_d y_1 & \cdots & x_d y_d
     \end{array}\right) \in \mathcal{Z}_n(\mathcal{P}).
\end{equation} 
We call $(x_1,\cdots,x_d)$ ($(y_1,\cdots,y_d)$ resp) the {\em left (right resp.) side $n$-tuple of} the determinant $\Delta\left((x_1,\cdots,x_d), (y_1, \cdots, y_d)\right)$.
\end{defn}
\begin{thm}{\sc[\cite{Su17} Theorem 4.6]}
\label{thm:inv1}
We have $B_{n\mathbb{K}}/{S_{n\mathbb{K}}}  \cong \mathcal{Z}_n(\mathcal{P})$.
\end{thm}

\begin{lem}
For $n>1$, let $x_1,\cdots,x_n, y_1, \cdots, y_n \in \mathcal{P}$. Suppose that $x_1,\cdots,x_n$ ($y_1$, $\cdots$, $y_n$ resp.) are mutually distinct, we have
\[ \Delta\left((x_1,\cdots,x_n), (y_1, \cdots, y_n)\right) \neq 0 .\]
in $\mathcal{Z}_n(\mathcal{P})$.
\end{lem}
\begin{proof}
For any $i=1,\cdots,n$, let $x_i,y_i \in \mathcal{P}$ and let $(x_{i,v}, x_{i,c}) ,(y_{i,v}, y_{i,c})\in \mathbb{K}^n \times \mathbb{K}^{n*}$. Under identification $B_{n\mathbb{K}}/{S_{n\mathbb{K}}}  \cong \mathcal{Z}_n(\mathcal{P})$ of Theorem \ref{thm:inv1}, we identify the vector $x_{i,v}$ in $\mathbb{K}^n$ with $x_i$ in $\mathcal{P}$ on the left and the covector $y_{j,c}$ in $\mathbb{K}^{n*}$ with $y_j$ in $\mathcal{P}$ on the right of ordered pair $x_i y_j$ in $\mathcal{Z}_n(\mathcal{P})$. Then the determinant $\Delta\left((x_1,\cdots,x_n), (y_1, \cdots, y_n)\right)$ is not zero in $\mathcal{Z}_n(\mathcal{P})$ if and only if $\det_{1\leq i,j\leq n}\left(x_{i,v},y_{j,c}\right)$ is not always zero in $\mathbb{K}$ for any generic $\mathbb{K}$-point of $B_{n\mathbb{K}}/{S_{n\mathbb{K}}}$. Actually, for any generic $\mathbb{K}$-point of $B_{n\mathbb{K}}/{S_{n\mathbb{K}}}$, the value of $\det_{1\leq i,j\leq n}\left(x_{i,v},y_{j,c}\right)$ is interpreted as the volume of $x_{1,v},\cdots,x_{n,v}$ with respect to the dual basis of $y_{1,c},\cdots,y_{n,c}$. If $x_{1,v},\cdots,x_{n,v}$ and $y_{1,c},\cdots,y_{n,c}$ are both in general position, the volume $\det_{1\leq i,j\leq n}\left(x_{i,v},y_{j,c}\right)$ is not zero. We conclude that
$$
 \Delta\left((x_1,\cdots ,x_n), (y_1,\cdots,y_n)\right) \neq 0 
$$
in $\mathcal{Z}_n(\mathcal{P})$.
\end{proof}
\begin{prop}
\label{prop Pl}
Let $x_1,\cdots,x_{n-1}, t, y, v_1,\cdots,v_n,u_1 \in \mathcal{P}$. If $x_1,\cdots,x_{n-1},y$ \\ ($v_1,\cdots,v_n,u_1$ resp.) are mutually distinct, we have
\[\frac{\Delta\left((x_1,\cdots, x_{n-1} , t), (v_1,v_2,\cdots, v_n)\right)}{\Delta\left((x_1,\cdots, x_{n-1} , y), (v_1,v_2,\cdots, v_n)\right)} =  \frac{\Delta\left((x_1,\cdots, x_{n-1} , t), (u_1,v_2,\cdots, v_n)\right)}{\Delta\left((x_1,\cdots, x_{n-1} , y), (u_1,v_2,\cdots, v_n)\right)}
\]
in $\mathcal{Q}_n(\mathcal{P})$.
\end{prop}
\begin{proof}
Consider the $(n+1)\times (n+1)$ matrix
\[M = \left(
      \begin{array}{ccccc}
        x_1 u_1 & x_1 v_1 & \cdots & \cdots & x_1 v_n \\
        \cdots & \cdots & \cdots & \cdots & \cdots \\
        x_{n-1} u_1 & x_{n-1} v_1 & \cdots & \cdots & x_{n-1} v_n \\
        t u_1& t v_1 & \cdots & \cdots & t v_n \\
        y u_1& y v_1 & \cdots & \cdots & y v_n \\
      \end{array}
    \right)
\]
The adjugate of $M$ is
\[M^{\star} =  \left(\begin{array}{cccc}
       A_{1,1} & \cdots & (-1)^{n+2}A_{n+1,1} \\
       \cdots & (-1)^{j+i}A_{j,i} & \cdots \\
       (-1)^{n+2}A_{1,n+1}& \cdots & A_{n+1,n+1}
     \end{array}\right)
\]
whose $(i,j)$ entry is $(-1)^{j+i}A_{j,i}$ and $A_{j,i}$ equals the determinant of the $(n \times n)$-matrix obtained from $M$ by deleting the $j$-th row and the $i$-th column. \\
We already know that
$$\det M = 0$$
in $\mathcal{Z}_n(\mathcal{P})$, hence we obtain
\begin{equation}
\label{equA04}
M^{\star} \cdot M = 0_{(n+1)\times (n+1)}.
\end{equation}
The entries of the matrices $M$, $M^{\star}$ and $M^{\star} \cdot M$ are polynomials in $\mathcal{Z}_n(\mathcal{P})$. Recall that by Theorem \ref{thm:inv1}, we identify a vector with a point on the left and a covector with a point on the right of ordered pairs of points in $\mathcal{Z}_n(\mathcal{P})$. When we specify the values of vectors and covectors, under $B_{n\mathbb{K}}/{S_{n\mathbb{K}}}  \cong \mathcal{Z}_n(\mathcal{P})$, we specify the values of all the polynomials in $\mathcal{Z}_n(\mathcal{P})$. Then the values of the matrices $M$, $M^{\star}$ and $M^{\star} \cdot M$ provides the linear endomorphisms of $\mathbb{K}^{n+1}$: $f$, $g$ and $g\circ f$ respectively. Actually, any polynomial $P$ is zero in the ring $\mathcal{Z}_n(\mathcal{P})$ over a field $\mathbb{K}$ of characteristic zero, if and only if $P$ is zero in all of the generic $\mathbb{K}$-points. The following arguments are true for any generic $\mathbb{K}$-point of $B_{n\mathbb{K}}/{S_{n\mathbb{K}}}$, thus true for $\mathcal{Z}_n(\mathcal{P})$. By Equation (\ref{equA04}), the rank of  $g\circ f$ (the dimension of the image of $g\circ f$) is $0$. By the above lemma, we have
\[ \Delta\left((x_1,\cdots ,x_{n-1},y), (v_1,\cdots,v_n)\right) \neq 0 .
\]
Thus, for any generic $\mathbb{K}$-point, the rank of $f$ is at least $n$. Therefore, for any generic $\mathbb{K}$-point, we have the rank of $g$ is at most $1$ (If not so, we will get the rank of $g\circ f$  is not $0$). By considering the top right corner $2\times 2$ minor of $M^{\star}$, for any generic $\mathbb{K}$-point, we have
\[A_{n,1} \cdot A_{n+1,2} - A_{n,2} \cdot A_{n+1,1} = 0,
\]
which implies that 
\[\frac{\Delta\left((x_1,\cdots, x_{n-1} , t), (v_1,v_2,\cdots, v_n)\right)}{\Delta\left((x_1,\cdots, x_{n-1} , y), (v_1,v_2,\cdots, v_n)\right)} =  \frac{\Delta\left((x_1,\cdots, x_{n-1} , t), (u_1,v_2,\cdots, v_n)\right)}{\Delta\left((x_1,\cdots, x_{n-1} , y), (u_1,v_2,\cdots, v_n)\right)}
\]
in $\mathcal{Q}_n(\mathcal{P})$.
\end{proof}

Moreover, by applying Proposition \ref{prop Pl} $n$ times, we have
\begin{cor}
\label{coorep}
Let $x_1,\cdots,x_{n-1}, t, y, v_1,\cdots,v_n,u_1,\cdots,u_n \in \mathcal{P}$. Suppose that $x_1$,$\cdots$, $x_{n-1}$, $y$ ($v_1,\cdots,v_n$ and $u_1,\cdots,u_n$ resp.) are mutually distinct, in $\mathcal{Q}_n(\mathcal{P})$, we have
\[\frac{\Delta\left((x_1,\cdots, x_{n-1} , t), (v_1,\cdots, v_n)\right)}{\Delta\left((x_1,\cdots, x_{n-1} , y), (v_1,\cdots, v_n)\right)} =  \frac{\Delta\left((x_1,\cdots, x_{n-1} , t), (u_1,\cdots, u_n)\right)}{\Delta\left((x_1,\cdots, x_{n-1} , y), (u_1,\cdots, u_n)\right)}.
\]
\end{cor}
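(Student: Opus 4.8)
The plan is to deduce Corollary~\ref{coorep} by iterating Lemma~\ref{lemma Pl}, after one preliminary observation about signs. Although Lemma~\ref{lemma Pl} is stated only for replacing the first right-hand entry $v_1\mapsto u_1$, it gives the analogous statement for replacing \emph{any} single right-hand entry $w_j\mapsto u$ while keeping the other $n-1$ fixed: interchanging two columns of a determinant multiplies it by $-1$, and applying the same column transposition to the numerator and to the denominator of a quotient $\Delta(\cdots)/\Delta(\cdots)$ introduces matching signs that cancel, so one permutes the $j$-th slot into first position, applies Lemma~\ref{lemma Pl}, and permutes back. The hypotheses needed are exactly those of Lemma~\ref{lemma Pl}: the $n-1$ retained right-hand entries together with the incoming point $u$ must be pairwise distinct, and $x_1,\dots,x_{n-1},y$ must be pairwise distinct.

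I would then move from $(v_1,\dots,v_n)$ to $(u_1,\dots,u_n)$ through a chain of such single-entry replacements. Write $V=\{v_1,\dots,v_n\}$ and $U=\{u_1,\dots,u_n\}$ as sets. At each stage remove from the current tuple a point of $V\setminus U$ and insert a point of $U\setminus V$ not yet inserted; since $|V\setminus U|=|U\setminus V|$, after $|V\setminus U|$ such steps the current tuple is a permutation of $(u_1,\dots,u_n)$, which a final column permutation (again sign-cancelling in the quotient) identifies with it. The first lemma of this subsection guarantees that every denominator $\Delta\bigl((x_1,\dots,x_{n-1},y),(\cdot)\bigr)$ appearing along the chain is a nonzero element of the integral domain $\mathcal{Z}_n(\mathcal{P})$, hence invertible in $\mathcal{Q}_n(\mathcal{P})$, so every intermediate quotient is a well-defined element of $\mathcal{B}_n(\mathcal{P})$; composing the chain of (reindexed) equalities of Lemma~\ref{lemma Pl} then yields the corollary.

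The point requiring care is the choice of order of the replacements: a naive ``replace $v_j$ by $u_j$ for $j=1,\dots,n$'' can violate the distinctness hypothesis of Lemma~\ref{lemma Pl} at an intermediate stage whenever some $u_i$ coincides with some $v_k$, which is why the remove-old / insert-fresh scheme above (or an equivalent device) is needed. As a transparent alternative, one may argue directly in the geometric model $B_{n\mathbb{K}}/S_{n\mathbb{K}}\cong\mathcal{Z}_n(\mathcal{P})$ of Section~4 of \cite{Su14}: there $\Delta\bigl((x_1,\dots,x_{n-1},\star),(w_1,\dots,w_n)\bigr)$ factors as the volume of the vectors associated with $x_1,\dots,x_{n-1},\star$ times the covolume of the covectors associated with $w_1,\dots,w_n$, so the right-hand factor is common to numerator and denominator of each quotient and cancels, making the claimed independence from $(u_1,\dots,u_n)$ immediate; one still invokes the first lemma of the subsection to see that the denominators are nonzero, so that the equality holds in $\mathcal{B}_n(\mathcal{P})$.
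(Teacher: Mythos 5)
Your primary argument — iterate Lemma~\ref{lemma Pl} to change the right-side $n$-tuple one entry at a time — is exactly what the paper does; the paper's proof is literally the single sentence ``by applying Lemma~\ref{lemma Pl} $n$ times.'' What you have added, and correctly so, is the observation that a naive slot-by-slot iteration ($v_j\mapsto u_j$) can break the distinctness hypothesis of Lemma~\ref{lemma Pl} whenever some $u_i$ equals some $v_k$ (the corollary's hypotheses do not forbid $\{v_1,\dots,v_n\}$ and $\{u_1,\dots,u_n\}$ from overlapping), together with the remove-old/insert-fresh ordering that repairs it, and the remark that sign-cancelling column permutations upgrade Lemma~\ref{lemma Pl} from replacing the first slot to replacing any slot. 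These points are genuine and are glossed over in the paper's one-line proof. Your alternative argument via the geometric model $\mathcal{Z}_n(\mathcal{P})\cong\mathbb{K}[M_s']$ is a different route that the paper does not take here: the factorization $\Delta\bigl((x_1,\dots,x_n),(w_1,\dots,w_n)\bigr)=\det A\cdot\det B$ (vector volume times covector covolume) makes the independence from the right-side tuple transparent, at the cost of leaning on Theorem~4.6/4.7 of \cite{Su14} — in particular that $\mathcal{Z}_n(\mathcal{P})$ is an integral domain, so that an identity of functions on $M_s'$ is an identity in the ring — rather than on the purely algebraic rank computation of Lemma~\ref{lemma Pl}. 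Both approaches are correct; the paper's (and your main one) stays internal to the ring and to the already-proved lemma, while the geometric one is shorter but imports more structure.
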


By the above corollary, we can define a ratio of two $(n\times n)$-determinants that does not depend on the right side $n$-tuple.

\begin{defn}{\sc[$(n\times n)$-determinant ratio]}
\label{defndr}
Let $x_1,\cdots,x_{n-1},y \in \mathcal{P}$ be different from each other.
The {\em $(n\times n)$-determinant ratio} of $x_1,\cdots,x_{n-1},t,y$:
\[E(x_1,\cdots,x_{n-1}| t,y) := \frac{\Delta\left(\left(x_1,\cdots, x_{n-1} , t\right), \left(v_1,\cdots, v_n\right)\right)}{\Delta\left(\left(x_1, \cdots,x_{n-1} , y\right), \left(v_1,\cdots, v_n\right)\right)}
\]
for any $v_1,\cdots,v_n \in \mathcal{P}$ different from each other. 

The fraction ring $\mathcal{D}_n(\mathcal{P})$ generated by all the $(n \times n)$-determinant ratios is called {\em $(n\times n)$-determinant ratio fraction ring}. 
\end{defn}
\begin{remark}
 The fraction ring $\mathcal{D}_n(\mathcal{P})$ is also a fraction ring generated by all elements of the form $\frac{\Delta\left(\left(x_1,\cdots, x_n\right), \left(v_1,\cdots, v_n\right)\right)}{\Delta\left(\left(y_1, \cdots,y_{n}\right), \left(v_1,\cdots, v_n\right)\right)}$, since
\[
\frac{\Delta\left(\left(x_1,\cdots, x_n\right), \left(v_1,\cdots, v_n\right)\right)}{\Delta\left(\left(y_1, \cdots,y_{n}\right), \left(v_1,\cdots, v_n\right)\right)} = \prod_{i=1}^n E(x_1,\cdots,x_{i-1}, y_{i+1}\cdots, y_{n}| x_{i},y_{i}) .
\]
\end{remark}
By Corollary \ref{coorep}, we have
\begin{cor}
Let $a,b,x_1,\cdots,x_{n-1},t,y\in \mathcal{P}$, $x_1,\cdots,x_{n-1},y$ be different from each other. The value of $$ \{ab, E(x_1,\cdots,x_{n-1}| t,y)\}$$ in $\mathcal{Q}_n(\mathcal{P})$ does not depend on the choice of right side $n$-tuple $(v_1,\cdots,v_n)$.
\end{cor}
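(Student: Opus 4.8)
The plan is to deduce this immediately from Corollary~\ref{coorep}, once one observes that the swapping bracket genuinely descends to an operator on the fraction ring $\mathcal{Q}_n(\mathcal{P})$.

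First I would record the structural facts. By Theorem~\ref{swappoisson} the swapping bracket is Poisson on $\mathcal{Z}(\mathcal{P})$, and by the consequence of Lemma~\ref{swcal} the ideal $R_n(\mathcal{P})$ is a Poisson ideal, so the bracket passes to the quotient $\mathcal{Z}_n(\mathcal{P})$. Since $\mathcal{Z}_n(\mathcal{P})$ is an integral domain, the Leibniz rule forces a unique extension of each derivation $\{ab,\cdot\}$ to the total fraction ring $\mathcal{Q}_n(\mathcal{P})$, namely $\{ab, p/q\} = \left(\{ab,p\}\cdot q - p\cdot\{ab,q\}\right)/q^{2}$; the routine check that this is independent of the chosen representative $p/q$ is exactly the standard fact that a Poisson structure on a domain extends uniquely to its field of fractions. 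Thus $\{ab,\cdot\}\colon \mathcal{Q}_n(\mathcal{P}) \to \mathcal{Q}_n(\mathcal{P})$ is a well-defined operator sending a well-defined element to a well-defined element.

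Next I would invoke Corollary~\ref{coorep}: for fixed $x_1,\dots,x_{n-1},t,y$ with $x_1,\dots,x_{n-1},y$ pairwise distinct, the determinant quotient
\[
E(x_1,\dots,x_{n-1}\mid t,y) \;=\; \frac{\Delta\left((x_1,\dots,x_{n-1},t),(v_1,\dots,v_n)\right)}{\Delta\left((x_1,\dots,x_{n-1},y),(v_1,\dots,v_n)\right)}
\]
is one and the same element of $\mathcal{Q}_n(\mathcal{P})$ for every admissible right side $n$-tuple $(v_1,\dots,v_n)$ (admissible meaning the $v_i$ are pairwise distinct, which guarantees the denominator is nonzero in $\mathcal{Z}_n(\mathcal{P})$). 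Applying the operator $\{ab,\cdot\}$ of the previous paragraph to this fixed element then produces a fixed element of $\mathcal{Q}_n(\mathcal{P})$, which is precisely the claim.

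The only subtle point — what one might call the main, though mild, obstacle — is that it is tempting to ``prove'' the statement directly by fixing a tuple $(v_1,\dots,v_n)$, expanding $\{ab, E(x_1,\dots,x_{n-1}\mid t,y)\}$ by the quotient rule, and evaluating each $\{ab,\Delta(\cdot,\cdot)\}$ via Lemma~\ref{swcal}; the resulting expression visibly involves the $v_i$, and one is then left to show by hand that two such expansions, for $(v_1,\dots,v_n)$ and $(v_1',\dots,v_n')$, agree in $\mathcal{Q}_n(\mathcal{P})$. That hands-on route amounts to re-deriving Corollary~\ref{coorep} at the level of brackets — it would run through the same adjugate relations as in the proof of Lemma~\ref{lemma Pl} — and is unnecessary: once $E(x_1,\dots,x_{n-1}\mid t,y)$ is known to be a single element of $\mathcal{Q}_n(\mathcal{P})$, the independence of its bracket with $ab$ from the right side $n$-tuple is automatic.
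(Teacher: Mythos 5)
Your proof is correct and matches the paper's (implicit) reasoning: the paper derives this corollary with just the phrase ``By Corollary \ref{coorep}, we have,'' which is exactly the observation that $E(x_1,\dots,x_{n-1}\mid t,y)$ is a single well-defined element of $\mathcal{Q}_n(\mathcal{P})$, so applying the well-defined derivation $\{ab,\cdot\}$ yields a well-defined result. Your explicit bookkeeping — that $R_n(\mathcal{P})$ is a Poisson ideal, hence the bracket descends to $\mathcal{Z}_n(\mathcal{P})$, and then extends uniquely to the fraction field of the integral domain — spells out what the paper leaves tacit, and your remark that a hands-on recomputation via Lemma \ref{swcal} would merely re-derive Corollary \ref{coorep} is an accurate description of the detour to avoid.
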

As a consequence,
\begin{cor}
\label{corollary:PP}
Let $\mathcal{P}'=\{u_1,\cdots,u_n\} \cup\mathcal{P}$. The value of $$ \left\{ab, \frac{\Delta\left(\left(x_1,\cdots, x_{n-1} , t\right), \left(u_1,\cdots, u_n\right)\right)}{\Delta\left(\left(x_1, \cdots,x_{n-1} , y\right), \left(u_1,\cdots, u_n\right)\right)}\right\}$$ in $\mathcal{Q}_n(\mathcal{P'})$ can be expressed in $\mathcal{Q}_n(\mathcal{P})$ by replacing $u_1,\cdots,u_n$ with any $n$ different elements $v_1,\cdots,v_n$ in $\mathcal{P}$.
\end{cor}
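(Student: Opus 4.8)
The plan is to deduce this immediately from the preceding corollary by regarding the displayed ratio as a single $(n\times n)$-determinant quotient over the enlarged point set $\mathcal{P}'$. First I would note that, since $\mathcal{P}'$ contains the $n$ distinct points $u_1,\dots,u_n$ together with all of $\mathcal{P}$, Corollary \ref{coorep} applied with ground set $\mathcal{P}'$ gives
$$\frac{\Delta\left(\left(x_1,\dots,x_{n-1},t\right),\left(u_1,\dots,u_n\right)\right)}{\Delta\left(\left(x_1,\dots,x_{n-1},y\right),\left(u_1,\dots,u_n\right)\right)}=E(x_1,\dots,x_{n-1}\,|\,t,y)$$
inside $\mathcal{B}_n(\mathcal{P}')$, where the determinant quotient on the right may be computed with respect to \emph{any} right side $n$-tuple drawn from $\mathcal{P}'$; in particular we are free to take that $n$-tuple to consist of $n$ distinct elements $v_1,\dots,v_n$ of the sub-circle $\mathcal{P}$ (which is possible since $|\mathcal{P}|\ge n$ always holds in our setting).

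Next I would invoke the previous corollary, but with $\mathcal{P}$ replaced by $\mathcal{P}'$: the value of $\{ab,E(x_1,\dots,x_{n-1}\,|\,t,y)\}$ in $\mathcal{Q}_n(\mathcal{P}')$ does not depend on the chosen right side $n$-tuple. Combining this with the first step, that value equals
$$\left\{ab,\ \frac{\Delta\left(\left(x_1,\dots,x_{n-1},t\right),\left(v_1,\dots,v_n\right)\right)}{\Delta\left(\left(x_1,\dots,x_{n-1},y\right),\left(v_1,\dots,v_n\right)\right)}\right\}.$$
Expanding this bracket by Leibniz's rule and the defining formula of the swapping bracket (or, if convenient, Lemma \ref{swcal}), every term is a rational expression in the generators $\{pq\}$ with $p,q\in\{a,b,x_1,\dots,x_{n-1},t,y,v_1,\dots,v_n\}\subseteq\mathcal{P}$, so the whole expression lies in $\mathcal{Q}_n(\mathcal{P})$, which is exactly the assertion.

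The one point that still needs justification — and what I regard as the (mild) \emph{main obstacle} — is to give meaning to the phrase ``can be expressed in $\mathcal{Q}_n(\mathcal{P})$'', i.e. to know that the map $\mathcal{Z}_n(\mathcal{P})\to\mathcal{Z}_n(\mathcal{P}')$ induced by the inclusion of generators is an injective Poisson algebra homomorphism, so that $\mathcal{Q}_n(\mathcal{P})$ sits inside $\mathcal{Q}_n(\mathcal{P}')$ compatibly with the two swapping brackets. Injectivity follows from the geometric model of Section 4 of \cite{Su14}: $R_n(\mathcal{P})$ consists precisely of the polynomial relations satisfied by the pairings $\langle x_{iv},y_{jc}\rangle$, and these are unaffected by adjoining further points, so $R_n(\mathcal{P})=R_n(\mathcal{P}')\cap\mathcal{Z}(\mathcal{P})$. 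That the map is Poisson is immediate, since the linking number $\mathcal{J}(rx,sy)$ depends only on the cyclic order of $r,x,s,y$, and we choose the cyclic structure on $\mathcal{P}'$ to extend that of $\mathcal{P}$. With this identification in place, the corollary is just the preceding corollary read inside the subalgebra $\mathcal{Q}_n(\mathcal{P})\subseteq\mathcal{Q}_n(\mathcal{P}')$.
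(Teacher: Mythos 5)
Your proposal is correct and follows essentially the same route as the paper, which treats this corollary as an immediate consequence of the preceding one: apply the independence of the right-side $n$-tuple over the enlarged set $\mathcal{P}'$, choose that $n$-tuple inside $\mathcal{P}$, and observe that the resulting bracket involves only generators from $\mathcal{P}$. The one thing you add — the explicit verification that $\mathcal{Z}_n(\mathcal{P})\hookrightarrow\mathcal{Z}_n(\mathcal{P}')$ is an injective Poisson morphism (via the geometric model showing $R_n(\mathcal{P})=R_n(\mathcal{P}')\cap\mathcal{Z}(\mathcal{P})$ and via the fact that the linking number depends only on the cyclic order) — is an implicit assumption the paper does not spell out, and your justification of it is sound, so this is a reasonable and careful expansion rather than a genuinely different argument.
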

By the above corollary, we can calculate the swapping bracket between two $(n\times n)$-determinant ratios with the right side $n$-tuples in any preferred position.

\section{Main theorem}
\subsection{Homomorphism from rank $n$ Fock--Goncharov algebra to rank $n$ swapping multifraction algebra}
\label{subsection:hom}
Let $D_k$ be a disk $D$ with $k$ points $m_b=\{\textbf{s}\prec \textbf{w}\prec \cdots \prec \textbf{t} \prec \textbf{s}\}$ on $\partial D$, where $\prec$ is defined with respect to the anticlockwise cyclic order on a circle. In this case $\mathcal{X}_{\operatorname{PGL}_n,D_k} \cong \mathcal{B}^k/\operatorname{PGL}_n$ and
\[X_V=\prod_{W \in \mathcal{I}_n \cup \mathcal{J}_n} \Delta_{W}^{\varepsilon_{V,W}}.\]

\begin{defn}
\label{defnhom}
Given an ideal triangulation $\mathcal{T}$ of $D_k$ and its $n$-triangulation $\mathcal{T}_n$, we have the fraction ring $\mathcal{FX}(\mathcal{T}_n)\subset
\mathcal{FA}_n$ as defined in Definition \ref{definition:fgcoor}.

Let 
\[\mathcal{P}= \{s_{n-1}\prec \cdots \prec s_{1} \prec w_{n-1} \prec \cdots \prec w_{1} \prec \cdots \prec t_{n-1} \prec  \cdots \prec t_{1} \prec s_{n-1}\}\]
on $S^1$ with $\#\mathcal{P}=k(n-1)$, where each $\textbf{r}\in m_b$ corresponds to $n-1$ anticlockwise ordered points $r_{n-1},\dots, r_1$ nearby in $\mathcal{P}$ as in Figure \ref{Figure:homo}. 

\begin{figure}
\includegraphics[scale=0.5]{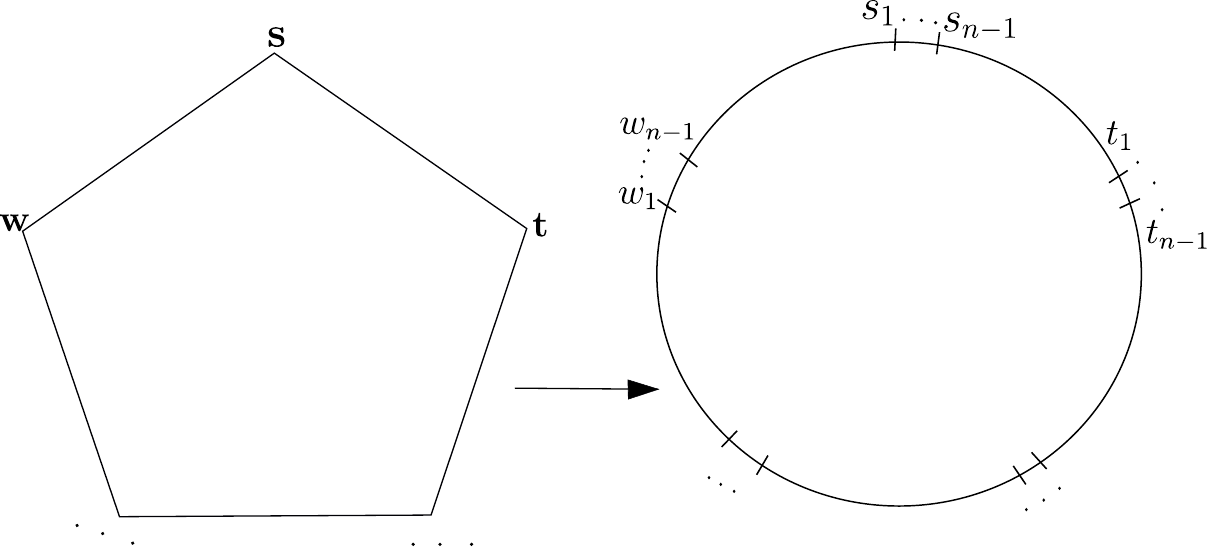}
\caption{$\{\textbf{s}\prec \textbf{w}\prec \cdots \prec \textbf{t} \prec \textbf{s}\}\rightarrow \{s_{n-1}\prec \cdots \prec s_{1} \prec w_{n-1} \prec \cdots \prec w_{1} \prec \cdots \prec t_{n-1} \prec  \cdots \prec t_{1} \prec s_{n-1}\}$}
\label{Figure:homo}
\end{figure}

Fix a choice of distinct $u_1,\cdots, u_n \in \mathcal{P}$, the {\em homomorphism $\chi_n$} (which depends on the choice) from $\mathcal{FA}_n$ to $\mathcal{Q}_n(\mathcal{P})$ is defined by extending the following formula on the generators to $\mathcal{FA}_n$ using Leibniz's rule
\[\chi_n(\Delta_V) = \Delta\left(\left(x_1,\cdots, x_m, y_1,\cdots, y_l, z_1,\cdots, z_p\right), \left(u_1,\cdots u_n\right)\right),\]
where $\Delta_V=\Omega\left(\textbf{x}^{m}\wedge \textbf{y}^{l} \wedge \textbf{z}^{p}\right)$ and any vertex $V$ of $\mathcal{T}_n$ is specified by a marked triangle $(\textbf{x},\textbf{y},\textbf{z})$ of $\mathcal{T}$ and a triple of non-negative integers $(m,l,p)$ with $m+l+p=n$ (Here $\Omega$ is the volume form of $E$ and the bases of the flags $\xi_\rho(\textbf{x})$, $\xi_\rho(\textbf{y})$, $\xi_\rho(\textbf{z})$ are 
\[\{\textbf{x}_1,\cdots,\textbf{x}_{n}\},\;\;\{\textbf{y}_1,\cdots,\textbf{y}_{n}\},\;\;\{\textbf{z}_1,\cdots,\textbf{z}_{n}\}\]
respectively. Recall the notation
$\textbf{v}^i:= \textbf{v}_1 \wedge \cdots \wedge \textbf{v}_i$).

We define the homomorphism $\theta_{\mathcal{T}_n}$ from $\mathcal{FX}(\mathcal{T}_n)$ to $\mathcal{B}_n(\mathcal{P})$ by restricting the homomorphism $\chi_n$ to the fraction ring $\mathcal{FX}(\mathcal{T}_n)$. We have 
\[\theta_{\mathcal{T}_n}(X_V)=\chi_n (X_V) = \prod_W \chi_n(\Delta_W)^{\varepsilon_{VW}}.\] 
\end{defn}
\begin{prop}
The image of $\theta_{\mathcal{T}_n}$ lies in $\mathcal{B}_n(\mathcal{P})$.
\end{prop}
\begin{proof}
For any mutually distinct $v_1,\cdots,v_n\in \mathcal{P}$ ($u_1,\cdots,u_n\in \mathcal{P}$ resp.) and any permutation $\sigma \in S_n$, by \cite{L18} Proposition 2, we have 
\[\frac{v_1 u_{\sigma(1)} \cdots v_n u_{\sigma(n)}}{v_1 u_1 \cdots v_n u_n}  \in \mathcal{B}_n(\mathcal{P}).\]

Thus we obtain
\[\frac{\Delta\big(\big(v_1,\cdots, v_n\big),\big(u_1,\cdots,u_n\big)\big)}{v_1 u_1 \cdots v_n u_n} = \sum_{\sigma \in S_n} \epsilon_\sigma \cdot \frac{v_1 u_{\sigma(1)} \cdots v_n u_{\sigma(n)}}{v_1 u_1 \cdots v_n u_n}\in \mathcal{B}_n(\mathcal{P}).\]

Since $\theta_{\mathcal{T}_n}(X_V)$ can be written as fraction of four or six $\frac{\Delta\big(\big(v_1,\cdots, v_n\big),\big(u_1,\cdots,u_n\big)\big)}{v_1 u_1 \cdots v_n u_n}$'s, we conclude that $\theta_{\mathcal{T}_n}(X_V)$ belongs to $\mathcal{B}_n(\mathcal{P})$.

\end{proof}
\begin{prop}
\label{prop:inj}
The homomorphism $\theta_{\mathcal{T}_n}$ is an injective homomorphism.
\end{prop}  
\begin{proof}
The homomorphism $\chi_n$ sends $(n\times n)$-determinants for $\mathbb{K}^n$ to $(n\times n)$-determinants in $\mathcal{Z}(\mathcal{P})$. By Theorem \ref{thm:inv1}, we have the ring isomorphism $B_{n\mathbb{K}}/{S_{n\mathbb{K}}}  \cong \mathcal{Z}_n(\mathcal{P})$, thus a choice of distinct $u_1,\cdots, u_n \in \mathcal{P}$ for the right side $n$-tuple for $(n\times n)$-determinants in $\mathcal{Z}(\mathcal{P})$ corresponds to fix a basis for $\mathbb{K}^n$. So any relation among $(n\times n)$-determinants for $\mathbb{K}^n$ in $\mathcal{FA}_n$ corresponds to a relation among $(n\times n)$-determinants in $\mathcal{Z}(\mathcal{P})$. Hence it follows that the homomorphism $\chi_n$ is injective. Since the homomorphism $\theta_{\mathcal{T}_n}$ is the homomorphism $\chi_n$ restricted to $\mathcal{FX}(\mathcal{T}_n)$, we conclude that the homomorphism $\theta_{\mathcal{T}_n}$ is injective.
\end{proof}

Recall the notation $w^i:=w_1,\cdots,w_i$.

\begin{prop}
\label{prop:FGswap}
For $V \in \mathcal{J}_n$ associating to $(\textbf{x},\textbf{y},\textbf{z})$ and a triple of positive integers $(m,l,p)$ with $m+l+p=n$, we have
\begin{eqnarray*}
&&\theta_{\mathcal{T}_n}(X_V)
= E\left(x^{m+1},y^{l-1},z^{p-1}|y_l, z_p\right) \cdot
 E\left(x^{m-1},y^{l+1},z^{p-1}|z_p, x_m\right)\cdot
\\&& E\left(x^{m-1},y^{l-1},z^{p+1}|x_m, y_l\right).
\end{eqnarray*}
For $V \in \mathcal{I}_n'$ corresponding to $\overrightarrow{\textbf{x}\textbf{z}}$ and $(i,n-i)$, suppose that two adjacent anticlockwise oriented ideal triangles $\overrightarrow{\textbf{x}\textbf{y}\textbf{z}}$ and $\overrightarrow{\textbf{x}\textbf{z}\textbf{t}}$ have a common edge $\overrightarrow{\textbf{x}\textbf{z}}$, we have
\[\theta_{\mathcal{T}_n}(X_V)=- E\left(x_1,\cdots,x_{i},z_1,\cdots,z_{n-i-1}|t_1,y_1\right)\cdot  E\left(x_1,\cdots,x_{i-1},z_1,\cdots,z_{n-i}|y_1,t_1\right).
\]
\end{prop}
\begin{proof}
We only prove the first case. The other case will follow in a similar way. We have
\begin{eqnarray*}
&&\frac{\Delta\big(\big(x^{m+1}, y^{l}, z^{p-1}\big),\big(u^n\big)\big)}{\Delta\big(\big(x^{m+1}, y^{l-1}, z^{p}\big),\big(u^n\big)\big)}
 = (-1)^{p-1}\frac{\Delta\big(\big(x^{m+1}, y^{l-1}, z^{p-1}, y_l\big),\big(u^n\big)\big)}{\Delta\big(\big(x^{m+1}, y^{l-1}, z^{p-1},z_p\big),\big(u^n\big)\big)} 
\\&=&
 (-1)^{p-1} E\left(x^{m+1},y^{l-1},z^{p-1}|y_l, z_p\right),
\end{eqnarray*}
\begin{eqnarray*}
&&\frac{\Delta\big(\big(x^{m-1}, y^{l+1}, z^{p}\big),\big(u^n\big)\big)}{\Delta\big(\big(x^{m}, y^{l+1}, z^{p-1}\big),\big(u^n\big)\big)}
 = (-1)^{l+p}\frac{\Delta\big(\big(x^{m-1}, y^{l+1}, z^{p-1},z_p\big),\big(u^n\big)\big)}{\Delta\big(\big(x^{m-1}, y^{l+1}, z^{p-1},x_m\big),\big(u^n\big)\big)}
\\&=&
 (-1)^{l+p} E\left(x^{m-1},y^{l+1},z^{p-1}|z_p, x_m\right),
\end{eqnarray*}
\begin{eqnarray*}
&&\frac{\Delta\big(\big(x^{m}, y^{l-1}, z^{p+1}\big),\big(u^n\big)\big)}{\Delta\big(\big(x^{m-1}, y^{l}, z^{p+1}\big),\big(u^n\big)\big)}
 = (-1)^{(l+p)-(p+1)}\frac{\Delta\big(\big(x^{m-1}, y^{l-1}, z^{p+1},x_m\big),\big(u^n\big)\big)}{\Delta\big(\big(x^{m-1}, y^{l-1}, z^{p+1},y_l\big),\big(u^n\big)\big)}
\\&=&
 (-1)^{l-1} E\left(x^{m-1},y^{l-1},z^{p+1}|x_m, y_l\right),
\end{eqnarray*}
Taking the product of the above three terms, we obtain that 
\begin{eqnarray*}
&&\theta_{\mathcal{T}_n}(X_V)
= E\left(x^{m+1},y^{l-1},z^{p-1}|y_l, z_p\right) \cdot
 E\left(x^{m-1},y^{l+1},z^{p-1}|z_p, x_m\right)\cdot
\\&&  E\left(x^{m-1},y^{l-1},z^{p+1}|x_m, y_l\right).
\end{eqnarray*}

\end{proof}

\subsection{Proof of the main theorem}
\label{subsection:mr}
The main technical part of the proof of the main theorem is contained in Proposition \ref{propxyz}. Moreover, in Proposition \ref{propdr} we show how to compute the swapping bracket between two oriented edge ratios. Finally, we give a proof of our main theorem by considering different cases.

\begin{thm}
\label{thmm}
{\sc[Main result]}
\label{mainresult}
Let $D_k$ be a disk with $k$ points on its boundary. For an integer $n>1$, given an ideal triangulation $\mathcal{T}$ of $D_k$ and its $n$-triangulation $\mathcal{T}_n$, the homomorphism $\theta_{\mathcal{T}_n}$ from $\mathcal{FX}(\mathcal{T}_n)$ to $\mathcal{B}_n(\mathcal{P})$ is Poisson with respect to the rank $n$ Fock--Goncharov Poisson bracket and the swapping bracket.
\end{thm}

As shown in Proposition \ref{prop:FGswap}, the image of one Fock--Goncharov $\mathcal{X}$ coordinate can be written as a product of two or three $(n\times n)$-determinant ratios. We start by computing the swapping bracket between two $(n\times n)$-determinants in our cases. Recall the notation in Equation (\ref{equation:notation2}) $[A,B]:=\frac{\{A,B\}}{AB}$.
We will use the following fact frequently, by the Leibniz's rule, $\forall A,B,C,D \in \mathcal{Z}(\mathcal{P})$
\begin{equation}
\label{equ:L}
\left[\frac{A}{B}, \frac{C}{D}\right] = [A, C]-[A, D]-[B, C]+ [B, D].
\end{equation}

\begin{lem}
\label{lemcalMB}
For $n\geq 2$, let $M=\left(c_s d_t\right)_{s,t=1}^n$ be a $(n\times n)$-matrix with $c_s, d_t \in \mathcal{P}$, let $M_{st}$ be the determinant of the matrix obtained from $M$ by deleting the $s$-th row and the $t$-th  column. Let $B \in \mathcal{Q}_n(\mathcal{P})$, we have
\[\left\{\det M, B\right\} = \sum_{s=1}^n \sum_{t=1}^n (-1)^{s+t} \cdot \det M_{st} \cdot \left\{c_s d_t, B\right\}
\]
in $\mathcal{Q}_n(\mathcal{P})$.
\end{lem}
\begin{proof}
Firstly, we have
\[\det M = \sum_{\sigma \in S_n} \epsilon_\sigma \cdot c_1 d_{\sigma(1)} \cdot \cdots \cdot c_n d_{\sigma(n)}
\]
where $S_n$ is the permutation group of $n$ elements, $\epsilon_\sigma$ is the sign of $\sigma$ in $S_n$.
By the Leibniz's rule, we have
\begin{eqnarray*}
&&\left\{\det M, B\right\} = \sum_{\sigma \in S_n} \epsilon_\sigma \cdot \{\prod_{i=1}^n c_i d_{\sigma(i)}, B \}
\\&=& \sum_{\sigma \in S_n} \epsilon_\sigma  \sum_{s=1}^n \prod_{i=1,i\neq s}^n c_i d_{\sigma(i)} \cdot \{c_s d_{\sigma(s)}, B \}
\\&=& \sum_{s=1}^n \sum_{t=1}^n  \left( \sum_{\sigma \in S_n, \sigma(s)=t} \epsilon_\sigma \cdot \prod_{i=1,i\neq s}^n c_i d_{\sigma(i)} \right) \cdot \{c_s d_t, B \}
\\&=&  \sum_{s=1}^n \sum_{t=1}^n (-1)^{s+t} \cdot \det M_{st} \cdot \left\{c_s d_t, B\right\}.
\end{eqnarray*}
We conclude that
\[\left\{\det M, B\right\} = \sum_{s=1}^n \sum_{t=1}^n (-1)^{s+t} \cdot \det M_{st} \cdot \left\{c_s d_t, B\right\}.
\]
\end{proof}
Recall the notation $[A,B]:=\frac{\{A,B\}}{AB}$. Before computing the swapping (Poisson) bracket, Let us recall some useful properties:
for any $A,B\in \mathcal{Q}_n(\mathcal{P})$
\[[A,B]=[-A,B]=[A,-B]=[-A,-B],\]
\[[1/A,B]=-[A,B],\;\;\; [A,B]=-[B,A],\]
\[ [A,A]=0.\]
Recall the notation $w^i:=w_1,\cdots,w_i$.
\begin{prop}{\sc[Main proposition]}
\label{propxyz}
Suppose that $x_{n-1}\prec \cdots \prec x_1 \prec v_1\prec\cdots \prec v_n\prec y_{n-1}\prec\cdots \prec y_1 \prec z_{n-1}\prec\cdots \prec z_1\prec u_1\prec\cdots \prec u_n \prec x_{n-1}$ are ordered anticlockwise in $\mathcal{P}$ as shown in Figure \ref{Figure:swapother4}, for non-negative integers $m,l,p,m',l',p'$ with $m+l+p=n$ and $m'+l'+p'=n$. 

\begin{figure}
\includegraphics[scale=0.5]{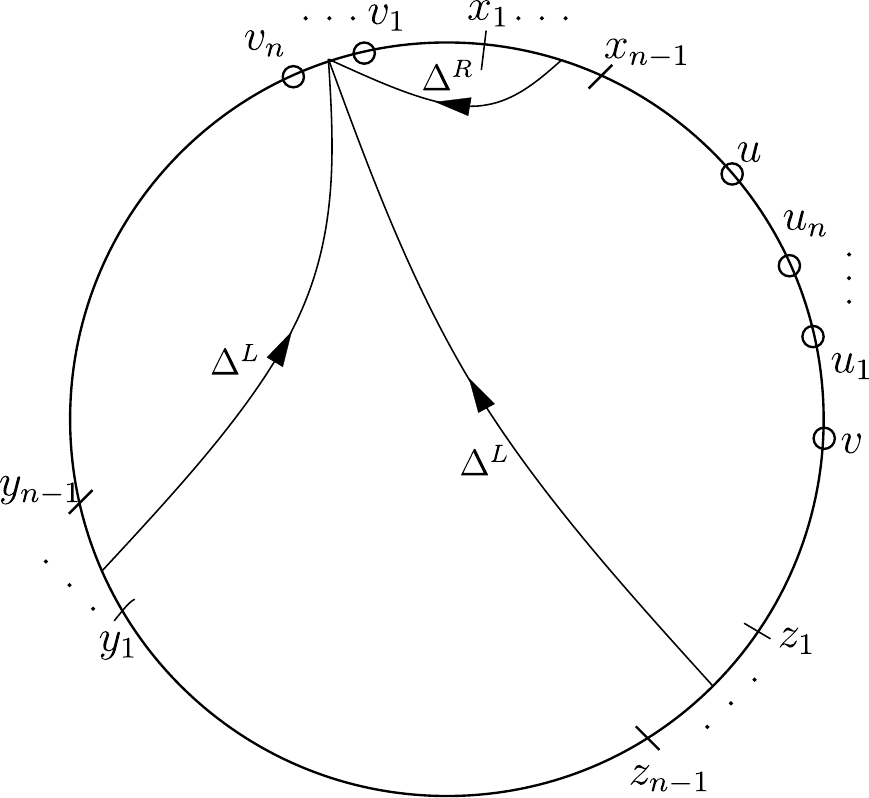}
\caption{Computation of $\left[\Delta\left(\left(x^m, y^l, z^p \right), \left(v^n\right)\right), \Delta\left(\left(x^{m'}, y^{l'}, z^{p'}\right), \left(u^n\right)\right)\right]$}
\label{Figure:swapother4}
\end{figure}
\textbf{If $l\geq l'$ or $p \leq p'$}(*) as in Figure \ref{Figure:propcond}, we have
\begin{eqnarray*}
&&C= \left[\Delta\left(\left(x^m, y^l, z^p \right), \left(v^n\right)\right), \Delta\left(\left(x^{m'}, y^{l'}, z^{p'}\right), \left(u^n\right)\right)\right]
\\&=&\frac{1}{2}\cdot \min\{m,m'\}- \frac{1}{2}\cdot \min\{l, l'\} - \frac{1}{2}\cdot \min\{p, p'\}
\end{eqnarray*}
in $\mathcal{Q}_n(\mathcal{P})$.
\end{prop}
\begin{remark}
Lemma \ref{swcal} and Lemma \ref{lemcalMB} allows us to compute the swapping bracket between any two $(n\times n)$-determinants. The general result is complicated. But with respect to the cyclic order in Figure \ref{Figure:swapother4}, the formula of Proposition \ref{propxyz} is simple under the condition (*) $l\geq l'$ or $p \leq p'$.
Essentially, the $+$ and $-$ sign before $\frac{1}{2}\cdot \min$ in our formula is due to our cyclic order.

The condition (*) is strict and is used in case $3$ of the proof. This condition depends on the cyclic order of points and is crucial to the proof of the main theorem. Finding the proper cyclic order and the condition (*) for this proposition is not as direct as the proof of this proposition.
\end{remark}

\begin{figure}
\includegraphics[scale=0.5]{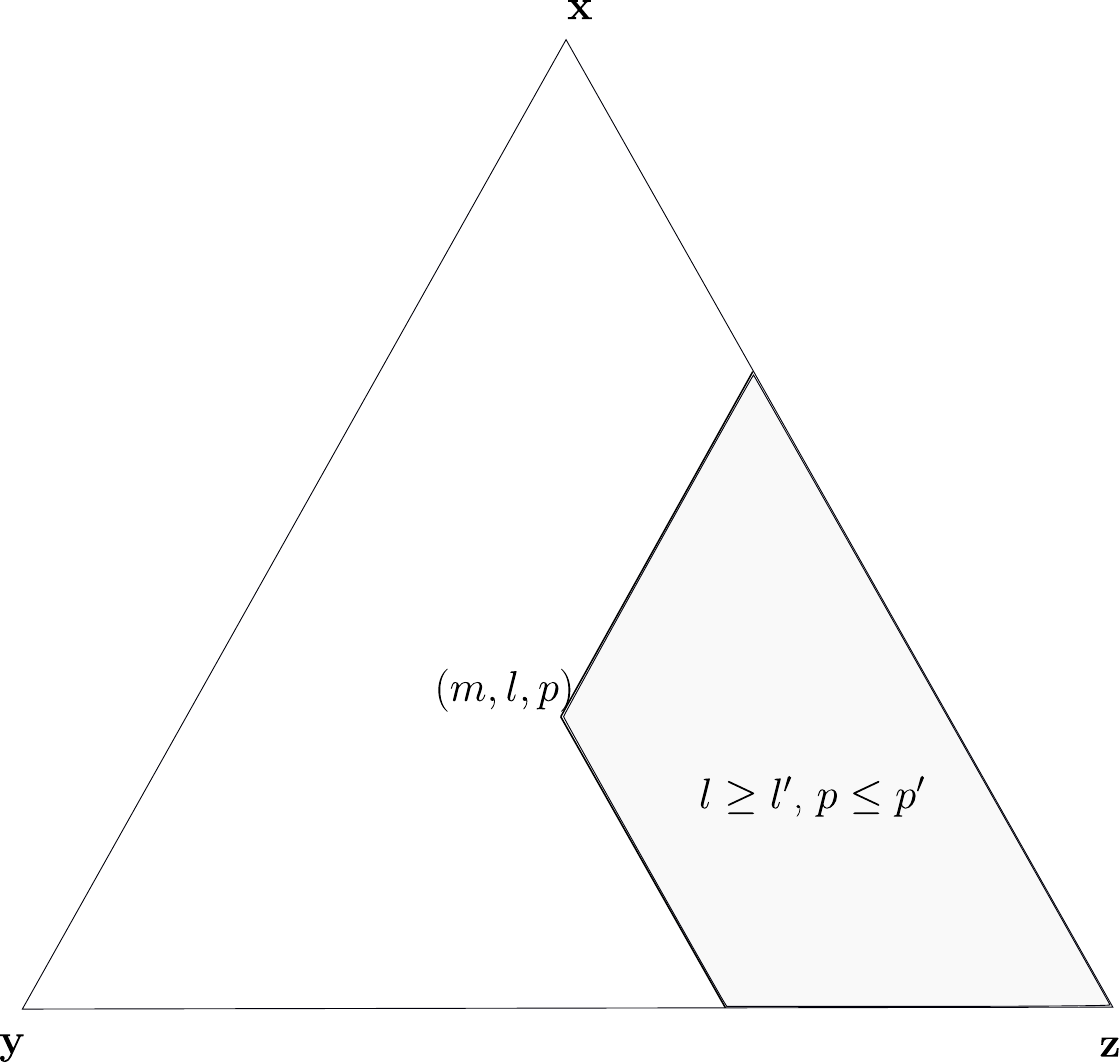}
\caption{The region of $(m',l',p')$ satisfies the condition (*).}
\label{Figure:propcond}
\end{figure}

\begin{proof}
Let $M=(c_s v_t)_{s,t=1}^n$ be a $(n\times n)$ matrix with $c_s=x_s$ for $s=1,\cdots,m$, $c_s=y_{s-m}$ for $s=m+1,\cdots, m+l$ and $c_s=z_{s-m-l}$ for $s=m+l+1,\cdots, n$. Then $\det M = \Delta\left(\left(x^m, y^l, z^p \right), \left(v^n\right)\right)$. Let $B = \Delta\left(\left(x^{m'}, y^{l'}, z^{p'}\right), \left(u^n\right)\right)$.

By Lemma \ref{lemcalMB}, we have 
\[C=\sum_{s=1}^n \frac{1}{\det M \cdot B} \sum_{t=1}^n (-1)^{s+t} \cdot M_{st} \cdot \left\{c_s v_t, B\right\}.
\] 
Given $s=1,\cdots,n$, we compute the sum 
\[\frac{1}{\det M \cdot B}\sum_{t=1}^n (-1)^{s+t} M_{st} \cdot \left\{c_s v_t, B\right\}\]
over $t$, where the summation is called the {\em sum over $t$ for $c_s$} for short. Thus $C$ equals to the sum of the above term over $s$. In the following three cases $x_s v_t$ or $y_{s-m} v_t$ or $z_{s-m-l} v_t$ takes the place of $c_s v_t$.

\textbf{1. For the sum over $t$ for $x_s$ where $1 \leq s \leq m$:}

Let us fix the notation 
\[w^i\backslash w_j \cup x := w_1,\cdots, w_{j-1},x,w_{j+1},\cdots,w_i.\]
Here $u \in S^1$ is strictly on the left side of $\overrightarrow{x_s v_t}$ for any possible $s$ and $t$. By Lemma \ref{swcal} Equation (\ref{equation:R}), and using $\Delta^R(x_s v_t)$ with respect to the right side of $\overrightarrow{x_s v_t}$, we have
\[ \left\{x_s v_t,  \Delta\left(\left(x^{m'}, y^{l'}, z^{p'}\right), \left(u^n\right)\right)\right\}=\Delta^R(x_s v_t)=\sum_{i=1}^s Q_{x_i},\]
where $Q_{x_i}=\mathcal{J}(x_s v_t, x_i u)\cdot x_i v_t \cdot \Delta\left(\left(x^{m'}\backslash x_i \cup x_s , y^{l'}, z^{p'}\right), \left(u^n\right)\right)$.
\begin{enumerate}
  \item Suppose that $1\leq s\leq m'$. If $i \leq m'$ and $i \neq s$, then $x_s$ appears twice in the left side $n$-tuple $\left(x^{m'}\backslash x_i \cup x_s, y^{l'}, z^{p'}\right)$. We obtain
\[Q_{x_i}=\begin{cases}

   \frac{1}{2} \cdot x_s v_t \cdot \Delta\left(\left(x^{m'}, y^{l'}, z^{p'}\right), \left(u^n\right)\right) &\mbox{if $i=s$,}\\

   0 &\mbox{if $i<s$.}
\end{cases} \] 
Thus we get
\[\left\{x_s v_t,  \Delta\left(\left(x^{m'}, y^{l'}, z^{p'}\right), \left(u^n\right)\right)\right\}=\frac{1}{2} \cdot x_s v_t \cdot \Delta\left(\left(x^{m'}, y^{l'}, z^{p'}\right), \left(u^n\right)\right).
\]
Hence for $1\leq s\leq \min\{m,m'\}$ the sum over $t$ for $x_s$ equals \[\frac{1}{\det M \cdot B}\sum_{t=1}^n (-1)^{s+t} \cdot \det M_{st} \cdot \left\{c_s v_t, B\right\} = \frac{1}{\det M \cdot B}\sum_{t=1}^n (-1)^{s+t} M_{st} \cdot \frac{1}{2}\cdot c_s v_t \cdot B  =\frac{1}{2}.\]
  \item Suppose that $m'< s \leq m$, we have
\begin{eqnarray*}
&&\left\{x_s v_t,  \Delta\left(\left(x^{m'}, y^{l'}, z^{p'}\right), \left(u^n\right)\right)\right\}
\\&=&\sum_{i=1}^{m'} x_i v_t \cdot \Delta\left(\left(x^{m'}\backslash x_i \cup x_s, y^{l'}, z^{p'}\right), \left(u^n\right)\right).
\end{eqnarray*}
The sum over $t$ for $x_s$ equals
\[\frac{1}{\det M \cdot B}\sum_{i=1}^{m'} \left( \Delta((x^{m}\backslash x_s \cup x_i, y^l, z^p ), (v^n)) \cdot
  \Delta((x^{m'}\backslash x_i \cup x_s, y^{l'}, z^{p'}), (u^n)) \right).
\]

Since $i \leq m'<s \leq m$, we have $x_i$ appears twice in the left side $n$-tuple $\left(x^{m}\backslash x_s \cup x_i, y^l, z^p \right)$. So
$$\Delta\left(\left(x^{m}\backslash x_s \cup x_i, y^l, z^p \right), \left(v^n\right)\right) =0.$$
Hence in this case the sum over $t$ for $x_s$ equals $0$.
\end{enumerate}
The computations of the other two cases follow the similar strategy as above.

\textbf{2. For the sum over $t$ for $y_s$ where $1 \leq s \leq l$}:

Here $v \in S^1$ is strictly on the right side of $\overrightarrow{y_s v_t}$ for any possible $s$ and $t$. By Lemma \ref{swcal} Equation (\ref{equation:L}), and using $\Delta^L(y_s v_t)$ with respect to the left side of $\overrightarrow{y_s v_t}$, we have 
\[ \left\{y_s v_t,  \Delta\left(\left(x^{m'}, y^{l'}, z^{p'}\right), \left(u^n\right)\right)\right\}=\sum_{i=s}^{l'} Q_{y_i},\]
where $Q_{y_i}=\mathcal{J}(y_s v_t, y_i v)\cdot y_i v_t \cdot \Delta\left(\left(x^{m'}, y^{l'}\backslash y_i \cup y_s, z^{p'}\right), \left(u^n\right)\right)$.
\begin{enumerate}
  \item Suppose that $1 \leq s\leq l'$. If $i \leq l'$ and $i\neq s$, then $y_s$ appears twice in the left side $n$-tuple $\left(x^{m'}, y^{l'}\backslash y_i \cup y_s, z^{p'}\right)$. We have
\[Q_{y_i}=\begin{cases}

   -\frac{1}{2} \cdot y_s v_t \cdot \Delta\left(\left(x^{m'}, y^{l'}, z^{p'}\right), \left(u^n\right)\right) &\mbox{if $i=s$,}\\

   0 &\mbox{if $s< i \leq l'$.}
\end{cases} \] 
Hence for $1\leq s \leq \min\{l,l'\}$ the sum over $t$ for $y_s$ equals $-\frac{1}{2}$.
  \item Suppose that $l'< s \leq l$, the summation is null in this case. Thus we obtain 
\[ \left\{y_s v_t,  \Delta\left(\left(x^{m'}, y^{l'}, z^{p'}\right), \left(u^n\right)\right)\right\}=0.\]
Hence in this case the sum over $t$ for $y_s$ equals $0$.
\end{enumerate}

\textbf{3. For the sum over $t$ for $z_s$ where $1 \leq s \leq p$:
}

Here $v \in S^1$ is strictly on the right side of $\overrightarrow{z_s v_t}$ for any possible $s$ and $t$. By Lemma \ref{swcal} Equation (\ref{equation:L}), and using $\Delta^L(z_s v_t)$ with respect to the left side of $\overrightarrow{z_s v_t}$, we have 
\begin{equation}
\begin{aligned}
\label{equation:zv}
&\left\{z_s v_t,  \Delta\left(\left(x^{m'}, y^{l'}, z^{p'}\right), \left(u^n\right)\right)\right\}
\\&=\sum_{i=s}^{p'}\mathcal{J}(z_s v_t, z_i v)\cdot z_i v_t \cdot \Delta\left(\left(x^{m'}, y^{l'}, z^{p'}\backslash z_i \cup z_s\right), \left(u^n\right)\right)
\\&+\sum_{i=1}^{l'} \mathcal{J}(z_s v_t, y_i v)\cdot y_i v_t \cdot \Delta\left(\left(x^{m'}, y^{l'}\backslash y_i \cup z_s, z^{p'}\right), \left(u^n\right)\right).
\end{aligned}
\end{equation}

\begin{enumerate}
\item Suppose that $1\leq s\leq p'$. If $i\leq p'$ and $i\neq s$, we have $z_s$ appears twice in the left side $n$-tuple $\left(x^{m'}, y^{l'}, z^{p'}\backslash z_i \cup z_s\right)$. Thus we get
\begin{eqnarray*}
&&\mathcal{J}(z_s v_t, z_i v)\cdot z_i v_t \cdot \Delta\left(\left(x^{m'}, y^{l'}, z^{p'}\backslash z_i \cup z_s\right), \left(u^n\right)\right)
\\&=&\begin{cases}

   -\frac{1}{2} \cdot z_s v_t \cdot \Delta\left(\left(x^{m'}, y^{l'}, z^{p'}\right), \left(u^n\right)\right) &\mbox{if $i=s$,}\\

   0 &\mbox{if $s < i\leq p'$.}
\end{cases} 
\end{eqnarray*}
For the third line of Equation \eqref{equation:zv}, since $z_s$ appears twice in the left side $n$-tuple $\left(x^{m'}, y^{l'}\backslash y_i \cup z_s, z^{p'}\right)$, we obtain
\[\Delta\left(\left(x^{m'}, y^{l'}\backslash y_i \cup z_s, z^{p'}\right), \left(u^n\right)\right)=0.\]
Thus we get 
\begin{eqnarray*}
&&\left\{z_s v_t,  \Delta\left(\left(x^{m'}, y^{l'}, z^{p'}\right), \left(u^n\right)\right)\right\}
= -\frac{1}{2} \cdot z_s v_t \cdot  \Delta\left(\left(x^{m'}, y^{l'}, z^{p'}\right), \left(u^n\right)\right).
\end{eqnarray*}
Hence for $1\leq s \leq \min\{p,p'\}$ the sum over $t$ for $z_s$ equals $-\frac{1}{2}$.
\item Suppose that $p'< s\leq p$, \textbf{by our condition} (*), we have $l\geq l'$. We get 
\begin{eqnarray*}
&&\left\{z_s v_t,  \Delta\left(\left(x^{m'}, y^{l'}, z^{p'}\right), \left(u^n\right)\right)\right\}
\\&=&\sum_{i=1}^{l'} \mathcal{J}(z_s v_t, y_i v)\cdot y_i v_t \cdot \Delta\left(\left(x^{m'}, y^{l'}\backslash y_i \cup z_s, z^{p'}\right), \left(u^n\right)\right)
\\&=&-\sum_{i=1}^{l'}  y_i v_t \cdot \Delta\left(\left(x^{m'}, y^{l'}\backslash y_i \cup z_s, z^{p'}\right), \left(u^n\right)\right).
\end{eqnarray*}
Thus the sum over $t$ for $z_s$ equals

\[\frac{-1}{\det M \cdot B}\sum_{i=1}^{l'} \Delta\left(\left(x^m, y^l, z^{p}\backslash z_s \cup y_i \right), (v^n)\right) \cdot
 \Delta\left(\left(x^{m'}, y^{l'}\backslash y_i \cup z_s , z^{p'}\right), (u^n)\right).\]
Observe that $i \leq l' \leq l$, thus $y_i$ appears twice in the left side $n$-tuple $\left(x^m, y^l, z^{p}\backslash z_s \cup y_i \right)$, hence $\Delta\left(\left(x^m, y^l, z^{p}\backslash z_s \cup y_i \right), (v^n)\right)=0$. 
Thus the above summation is zero. Hence in this case the sum over $t$ for $z_s$ equals $0$.
\end{enumerate}
Sum over all the above cases, we conclude that
\[C =  \frac{1}{2}\cdot \min\{m,m'\}- \frac{1}{2}\cdot \min\{l, l'\} - \frac{1}{2}\cdot \min\{p, p'\}.
\]
\end{proof}

\begin{defn}{\sc[oriented edge ratio]}
Given an ideal triangulation $\mathcal{T}$ and its $n$-triangulation $\mathcal{T}_n$ of $D_k$. For the vertex $V \in \mathcal{I}_n \cup \mathcal{J}_n$ associating to the marked triangle $(\textbf{x},\textbf{y},\textbf{z})$ and a triple of non-negative integers $(m,l,p)$ with $m+l+p=n$, we express the vertex $V$ by $v_{\textbf{x},\textbf{y},\textbf{z}}^{m,l,p}$. To the {\em oriented edges}
\[e_1=\overrightarrow{v_{\textbf{x},\textbf{y},\textbf{z}}^{m+1,l,p-1} v_{\textbf{x},\textbf{y},\textbf{z}}^{m+1,l-1,p}}, \;e_2=\overrightarrow{v_{\textbf{x},\textbf{y},\textbf{z}}^{m-1,l+1,p} v_{\textbf{x},\textbf{y},\textbf{z}}^{m,l+1,p-1}},\;e_3=\overrightarrow{v_{\textbf{x},\textbf{y},\textbf{z}}^{m,l-1,p+1} v_{\textbf{x},\textbf{y},\textbf{z}}^{m-1,l,p+1}}\] of $\mathcal{T}_n$ without touching the vertices of $\mathcal{T}$, we associate 
\[E_{e_1}:=E\left(x^{m+1},y^{l-1},z^{p-1}|y_{l},z_{p}\right),\]
\[E_{e_2}:=E\left(x^{m-1},y^{l+1},z^{p-1}|z_{p},x_{m}\right),\]
\[E_{e_3}:=E\left(x^{m-1},y^{l-1},z^{p+1}|x_{m},y_{l}\right).\]
 Each one of them is called {\em oriented edge ratio} of the corresponding arrow. 
\end{defn}
\begin{lem}
\label{lemct}
The image of the Fock--Goncharov coordinates by $\theta_{\mathcal{T}_n}$ are the products of direct edge ratios as in Figure \ref{imte}. 
\end{lem}
\begin{figure}
\includegraphics[scale=0.5]{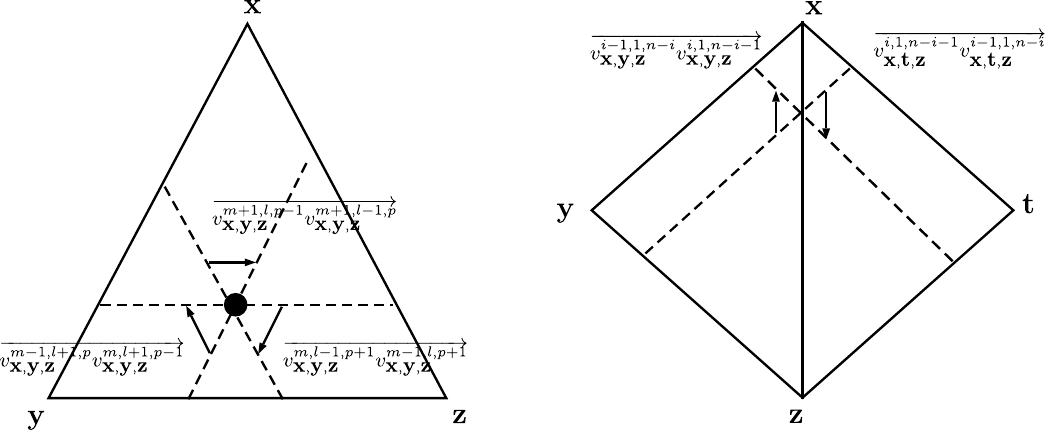}
\caption{$\theta_{\mathcal{T}_n}(T_{m,l,p}(X,Y,Z))$ and $\theta_{\mathcal{T}_n} (\mathbb{B}_i(Y,T,Z,X))$}
\label{imte}
\end{figure}
\begin{proof}
By Proposition \ref{prop:FGswap}, when $V \in \mathcal{J}_n$ is specified by $(\textbf{x},\textbf{y},\textbf{z})$ and a triple of positive integers $(m,l,p)$ with $m+l+p=n$, we have
\[
\theta_{\mathcal{T}_n}(X_V) = 
E_{\overrightarrow{v_{\textbf{x},\textbf{y},\textbf{z}}^{m+1,l,p-1} v_{\textbf{x},\textbf{y},\textbf{z}}^{m+1,l-1,p}}} \cdot
E_{\overrightarrow{v_{\textbf{x},\textbf{y},\textbf{z}}^{m-1,l+1,p} v_{\textbf{x},\textbf{y},\textbf{z}}^{m,l+1,p-1}}} \cdot
E_{\overrightarrow{v_{\textbf{x},\textbf{y},\textbf{z}}^{m,l-1,p+1} v_{\textbf{x},\textbf{y},\textbf{z}}^{m-1,l,p+1}}}.
\]

For $V \in \mathcal{I}_n'$ associating to $\overrightarrow{\textbf{x}\textbf{z}}$ and $(i,n-i)$, suppose that two adjacent anticlockwise oriented ideal triangles $\overrightarrow{\textbf{x}\textbf{y}\textbf{z}}$ and $\overrightarrow{\textbf{x}\textbf{z}\textbf{t}}$ have a common edge $\overrightarrow{\textbf{x}\textbf{z}}$, we have
 
\[
\theta_{\mathcal{T}_n} (X_V)
=- E_{\overrightarrow{v_{\textbf{x},\textbf{y},\textbf{z}}^{i-1,1,n-i} v_{\textbf{x},\textbf{y},\textbf{z}}^{i,1,n-i-1}}} \cdot E_{\overrightarrow{v_{\textbf{x},\textbf{t},\textbf{z}}^{i,1,n-i-1} v_{\textbf{x},\textbf{t},\textbf{z}}^{i-1,1,n-i}}}.
\]
\end{proof}
\begin{defn}{\sc[level]}
Given a triangulation $\mathcal{T}$ and its $n$-triangulation $\mathcal{T}_n$. For any vertex $\textbf{x}$ of $\mathcal{T}$, we define the {\em $k$-th level} of $\textbf{x}$ to be the union of the edges in 
\[\{v_{\textbf{x},\textbf{y},\textbf{z}}^{n-k,l,k-l} v_{\textbf{x},\textbf{y},\textbf{z}}^{n-k,l-1,k-l+1}\;|\;\overline{\textbf{x}\textbf{y}\textbf{z}} \;is \; a\; triangle\; of \; \mathcal{T}\; for \; some\; \textbf{y},\textbf{z},\; l=1,\cdots,k\}.\]
For any oriented edge $e$ lying between $i$-th level and $(i+1)$-th level of $\textbf{x}$, the {\em sign $\epsilon_{\textbf{x}}(e)$ of $e$ with respect to} $\textbf{x}$ is $+1$ ($-1$ resp.) if the arrow of $e$ goes from the $(i+1)$-th level of $\textbf{x}$ to the $i$-th level of $\textbf{x}$ (otherwise resp.). For example $\epsilon_{\textbf{x}}(e_1)=\epsilon_{\textbf{x}}(e_2)=1$ in Figure \ref{der1}(1)(2).

When the triangulation $\mathcal{T}$ is an ideal triangulation of $D_k$, the $k$-th level of $\textbf{x}$ is topologically an interval, not a circle.
In this case, for any two oriented edges $e_1$ and $e_2$ in $\mathcal{T}_n$, we say $e_2$ is {\em after (before resp.) $e_1$ for} $\textbf{x}$, if $e_1$ and $e_2$ both lie between $i$-th level and $(i+1)$-th level of $\textbf{x}$ and $e_2$ is strictly after (before resp.) $e_1$ with respect to anticlockwise orientation centered at $\textbf{x}$ as in Figure \ref{der1}(1) ((2) resp.). 
\end{defn}

\begin{prop}
\label{propdr}
Given an ideal triangulation $\mathcal{T}$ of $D_k$ and its $n$-triangulation $\mathcal{T}_n$.
 Let $e_1$ ($e_2$ resp.) be the oriented edge of $\mathcal{T}_n$ lying inside the ideal triangle $\overline{\textbf{x}\textbf{y}\textbf{z}}$ ($\overline{\textbf{x}'\textbf{y}'\textbf{z}'}$ resp.) of $\mathcal{T}$. Then as in Figure \ref{der1} and \ref{der2}, we have
\[\left[E_{e_1}, E_{e_2}\right]
=\begin{cases}\frac{1}{2} \cdot \epsilon_{\textbf{u}}(e_1) \cdot \epsilon_{\textbf{u}}(e_2) ,& e_2 \;\;is\;\;after\;\; e_1 \;\; for \;\; \textbf{u} \in \{\textbf{x},\textbf{y},\textbf{z}\}\cap \{\textbf{x}',\textbf{y}',\textbf{z}'\}; \cr -\frac{1}{2}\cdot \epsilon_{\textbf{u}}(e_1) \cdot \epsilon_{\textbf{u}}(e_2) ,&e_2\;\;is\;\; before\;\; e_1 \;\; for \;\; \textbf{u}\in \{\textbf{x},\textbf{y},\textbf{z}\}\cap \{\textbf{x}',\textbf{y}',\textbf{z}'\};\cr 0,&otherwise.\end{cases}
\]
\end{prop}

\begin{figure}
\includegraphics[scale=0.4]{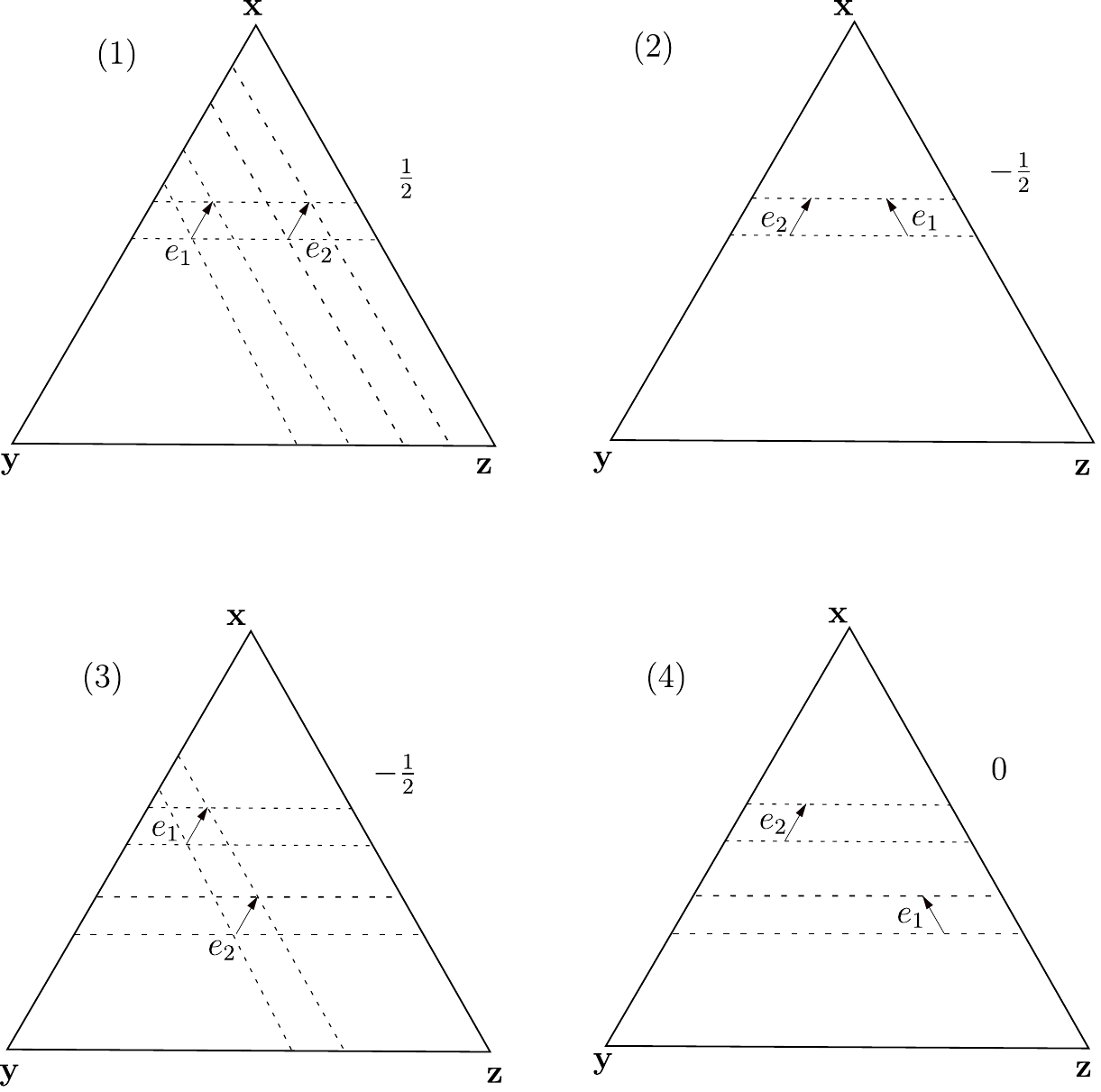}
\caption{Computation of $[E_{e_1},E_{e_2}]$. Consider the number of vertices that each vertex has $e_1$ and $e_2$ lying between two successive levels of that vertex. For (1)(3) the number is two, say $\textbf{x}$ and $\textbf{y}$; for (2)(4) the number is one, say $\textbf{x}$. }
\label{der1}
\end{figure}

\begin{proof}
The number of elements in $\#\left(\{\textbf{x},\textbf{y},\textbf{z}\}\cap \{\textbf{x'},\textbf{y'},\textbf{z'}\}\right)$, denoted by $\mathcal{N}$, is zero or one or two or three. When $\mathcal{N}=0$, we have $\left[E_{e_1}, E_{e_2}\right]=0$ since all the linking numbers are zero by setting the right side $n$-tuples properly.

When $\mathcal{N}=3$, suppose that the triangle $\overline{\textbf{x}\textbf{y}\textbf{z}}$ of $\mathcal{T}$ has $\textbf{x}\prec \textbf{y} \prec \textbf{z} \prec \textbf{x}$ with respect to anticlockwise orientation of $\partial D$. Suppose that $x_{n-1}\prec \cdots \prec x_1 \prec y_{n-1}\prec\cdots \prec y_1 \prec z_{n-1}\prec\cdots \prec z_1\prec  x_{n-1}$ are anticlockwise ordered in $\mathcal{P}$ as shown in Figure \ref{Figure:swapother4}.
In the triangle $\overline{\textbf{x}\textbf{y}\textbf{z}}$, for any given oriented edge, there are two different vertices in $\{\textbf{x},\textbf{y},\textbf{z}\}$ such that for each one of them, say $\textbf{u}$, this oriented edge lies between two successive levels of $\textbf{u}$. Thus there is a common vertex, say $\textbf{x}$, such that for the two oriented edges $e_1, e_2$ in $\mathcal{T}_n$, one lies between $a$-th level and $(a+1)$-th level of $\textbf{x}$, and the other lies between $a'$-th level and $(a'+1)$-th level of $\textbf{x}$. Since $[1/A,B]=-[A,B]$, we fix $\epsilon_{\textbf{x}}(e_1)=$ $\epsilon_{\textbf{x}}(e_2)=1$ without loss of generality. By symmetry, there are two cases as follows to be checked.
\begin{enumerate}
\item Suppose that $e_1=\overrightarrow{v_{\textbf{x},\textbf{y},\textbf{z}}^{a,b+1,c} v_{\textbf{x},\textbf{y},\textbf{z}}^{a+1,b,c}}$ and $e_2=\overrightarrow{v_{\textbf{x},\textbf{y},\textbf{z}}^{a',b'+1,c'} v_{\textbf{x},\textbf{y},\textbf{z}}^{a'+1,b',c'}}$ where the non-negative integers $a,b,c,a',b',c'$ satisfy $a+b+c=n-1$ and $a'+b'+c'=n-1$ as in Figure \ref{der1}(1)(3).
\begin{enumerate}
\item \label{case:drdr1}
If $b>b'$, we arrange $v_1,\cdots,v_n,u_1,\cdots,u_n$ so that $x_{n-1}\prec \cdots \prec x_1 \prec v_1\prec\cdots \prec v_n\prec y_{n-1}\prec\cdots \prec y_1 \prec z_{n-1}\prec\cdots \prec z_1\prec u_1\prec\cdots \prec u_n \prec x_{n-1}$ as in Figure \ref{Figure:swapother4} for using Proposition \ref{propxyz}. By Corollary \ref{corollary:PP}, the swapping bracket between two oriented edge ratios does not depend on the right side $n$-tuples $(v^n), (u^n)$ that we choose. By Proposition \ref{propxyz} with the condition (*) $l\geq l'$ there, we have
\begin{eqnarray*}
  &&\left[E\left(x^{a},y^{b},z^{c}|y_{b+1},x_{a+1}\right), E\left(x^{a'},y^{b'},z^{c'}|y_{b'+1},x_{a'+1}\right)\right]
  \\&=& \left[(-1)^{2c+b}\frac{\Delta\left((x^a,y^{b+1},z^c),(v^n)\right)}{\Delta\left((x^{a+1},y^{b},z^c),(v^n)\right)}, (-1)^{2c'+b'}\frac{\Delta\left((x^{a'},y^{b'+1},z^{c'}),(u^n)\right)}{\Delta\left((x^{a'+1},y^{b'},z^{c'}),(u^n)\right)}\right]
\\&=& \left[\frac{\Delta\left((x^a,y^{b+1},z^c),(v^n)\right)}{\Delta\left((x^{a+1},y^{b},z^c),(v^n)\right)}, \frac{\Delta\left((x^{a'},y^{b'+1},z^{c'}),(u^n)\right)}{\Delta\left((x^{a'+1},y^{b'},z^{c'}),(u^n)\right)}\right]
\\&=&\left[\Delta\left((x^a,y^{b+1},z^c),(v^n)\right), \Delta\left((x^{a'},y^{b'+1},z^{c'}),(u^n)\right)\right]
\\&-&\left[\Delta\left((x^a,y^{b+1},z^c),(v^n)\right), \Delta\left((x^{a'+1},y^{b'},z^{c'}),(u^n)\right)\right]
\\&-&\left[\Delta\left((x^{a+1},y^{b},z^c),(v^n)\right), \Delta\left((x^{a'},y^{b'+1},z^{c'}),(u^n)\right)\right]
\\&+& \left[\Delta\left((x^{a+1},y^{b},z^c),(v^n)\right), \Delta\left((x^{a'+1},y^{b'},z^{c'}),(u^n)\right)\right]
\\& =& \left(\frac{1}{2}\cdot\min\{a,a'\}- \frac{1}{2}\cdot\min\{b+1,b'+1\}-  \frac{1}{2}\cdot\min\{c,c'\})\right)
\\&-&\left(\frac{1}{2}\cdot\min\{a,a'+1\}- \frac{1}{2}\cdot\min\{b+1,b'\}-  \frac{1}{2}\cdot\min\{c,c'\})\right)
\\& -&\left(\frac{1}{2}\cdot\min\{a+1,a'\}- \frac{1}{2}\cdot\min\{b,b'+1\}-  \frac{1}{2}\cdot\min\{c,c'\})\right)
\\& +&\left(\frac{1}{2}\cdot\min\{a+1,a'+1\}- \frac{1}{2}\cdot\min\{b,b'\}-  \frac{1}{2}\cdot\min\{c,c'\})\right)
\\&=& \frac{1}{2} \cdot \left(\min\{a,a'\} - \min\{a,a'+1\} - \min\{a+1,a'\} + \min\{a+1,a'+1\}\right) 
\\&-& \frac{1}{2} \cdot \left(\min\{b,b'\} - \min\{b,b'+1\} - \min\{b+1,b'\} + \min\{b+1,b'+1\}\right)
\\&=& \frac{1}{2} \cdot \left(\min\{a,a'\} - \min\{a,a'+1\} - \min\{a+1,a'\} + \min\{a+1,a'+1\}\right)  
\\&=& \begin{cases}

   \frac{1}{2} &\mbox{if $a'=a$,}\\

   0 &\mbox{if $a'\neq a$.}
\end{cases}
\end{eqnarray*}
Note that in this case if $a'=a$, as in Figure \ref{der1}(1) $e_2$ is after $e_1$ for $\textbf{x}$.
\item If $b<b'$, we have $b'> b$. Using the computation in Case (\ref{case:drdr1}), we get
\begin{eqnarray*}
  &&\left[E\left(x^{a},y^{b},z^{c}|y_{b+1},x_{a+1}\right), E\left(x^{a'},y^{b'},z^{c'}|y_{b'+1},x_{a'+1}\right)\right] 
  \\&=& -\left[E\left(x^{a'},y^{b'},z^{c'}|y_{b'+1},x_{a'+1}\right), E\left(x^{a},y^{b},z^{c}|y_{b+1},x_{a+1}\right)\right]
  \\&=& \begin{cases}

   -\frac{1}{2} &\mbox{if $a'=a$,}\\

   0 &\mbox{if $a'\neq a$.}
\end{cases}
  \end{eqnarray*}
Note that in this case if $b<b'$ and $a'=a$, the oriented edge $e_2$ is before $e_1$ for $\textbf{x}$.

\item 
If $b=b'$ and $c< c'$, as in Figure \ref{der1}(3) $e_2$ is before $e_1$ for $\textbf{y}$ and $a\neq a'$. By Proposition \ref{propxyz} with the condition (*) $p\leq p'$ there, similarly we obtain 
\begin{eqnarray*}
&&\left[E\left(x^{a},y^{b},z^{c}|y_{b+1},x_{a+1}\right), E\left(x^{a'},y^{b'},z^{c'}|y_{b'+1},x_{a'+1}\right)\right]
\\&=& \frac{1}{2} \cdot \left(\min\{a,a'\} - \min\{a,a'+1\} - \min\{a+1,a'\} + \min\{a+1,a'+1\}\right) 
\\&-& \frac{1}{2} \cdot \left(\min\{b,b'\} - \min\{b,b'+1\} - \min\{b+1,b'\} + \min\{b+1,b'+1\}\right)
\\&=&-\frac{1}{2}.
\end{eqnarray*}
If $b=b'$ and $c> c'$, we have $e_2$ is after $e_1$ for $\textbf{y}$, $c'<c$ and $a\neq a'$. Using the above computation, we get 
\begin{eqnarray*}
&&\left[E\left(x^{a},y^{b},z^{c}|y_{b+1},x_{a+1}\right), E\left(x^{a'},y^{b'},z^{c'}|y_{b'+1},x_{a'+1}\right)\right]
\\&=&  -\left[E\left(x^{a'},y^{b'},z^{c'}|y_{b'+1},x_{a'+1}\right),E\left(x^{a},y^{b},z^{c}|y_{b+1},x_{a+1}\right)\right] =\frac{1}{2}.
\end{eqnarray*}
If $b=b'$ and $c=c'$, then $a=a'$. Thus in this case we have  
\[\left[E_{e_1}, E_{e_2}\right]=0.\] 
\end{enumerate}

\item Suppose that $e_1=\overrightarrow{v_{\textbf{x},\textbf{y},\textbf{z}}^{a,b,c+1} v_{\textbf{x},\textbf{y},\textbf{z}}^{a+1,b,c}}$ and $e_2=\overrightarrow{v_{\textbf{x},\textbf{y},\textbf{z}}^{a',b'+1,c'} v_{\textbf{x},\textbf{y},\textbf{z}}^{a'+1,b',c'}}$ where the non-negative integers $a,b,c,a',b',c'$ satisfy $a+b+c=n-1$ and $a'+b'+c'=n-1$ as in Figure \ref{der1}(2)(4).  
\begin{enumerate}
\item 
\label{case:drdr2}
If $b>b'$, by Proposition \ref{propxyz} with the condition (*) $l\geq l'$ there, we have
\begin{eqnarray*}
  &&\left[E\left(x^{a},y^{b},z^{c}|z_{c+1},x_{a+1}\right), E\left(x^{a'},y^{b'},z^{c'}|y_{b'+1},x_{a'+1}\right)\right]
\\&=& \left[(-1)^{b+c}\frac{\Delta\left((x^a,y^{b},z^{c+1}),(v^n)\right)}{\Delta\left((x^{a+1},y^{b},z^c),(v^n)\right)}, (-1)^{b'+2c'}\frac{\Delta\left((x^{a'},y^{b'+1},z^{c'}),(u^n)\right)}{\Delta\left((x^{a'+1},y^{b'},z^{c'}),(u^n)\right)}\right]
\\&=& \left[\frac{\Delta\left((x^a,y^{b},z^{c+1}),(v^n)\right)}{\Delta\left((x^{a+1},y^{b},z^c),(v^n)\right)}, \frac{\Delta\left((x^{a'},y^{b'+1},z^{c'}),(u^n)\right)}{\Delta\left((x^{a'+1},y^{b'},z^{c'}),(u^n)\right)}\right]
\\& =& \left(\frac{1}{2}\cdot\min\{a,a'\}- \frac{1}{2}\cdot\min\{b,b'+1\}-  \frac{1}{2}\cdot\min\{c+1,c'\})\right)
\\&-&\left(\frac{1}{2}\cdot\min\{a,a'+1\}- \frac{1}{2}\cdot\min\{b,b'\}-  \frac{1}{2}\cdot\min\{c+1,c'\})\right)
\\& -&\left(\frac{1}{2}\cdot\min\{a+1,a'\}- \frac{1}{2}\cdot\min\{b,b'+1\}-  \frac{1}{2}\cdot\min\{c,c'\})\right)
\\& +&\left(\frac{1}{2}\cdot\min\{a+1,a'+1\}- \frac{1}{2}\cdot\min\{b,b'\}-  \frac{1}{2}\cdot\min\{c,c'\})\right)
\\&=& \frac{1}{2} \cdot \left(\min\{a,a'\} - \min\{a,a'+1\} - \min\{a+1,a'\} + \min\{a+1,a'+1\}\right)  
\\&=& \begin{cases}

   \frac{1}{2} &\mbox{if $a'=a$,}\\

   0 &\mbox{if $a'\neq a$.}
\end{cases}
\end{eqnarray*}
Note that in this case if $a'=a$, then $e_2$ is after $e_1$ for $\textbf{x}$.

\item If $b\leq b'$, then $ b' \geq b$. Using the computation in Case (\ref{case:drdr2}), we get
\begin{eqnarray*}
  &&\left[E\left(x^{a},y^{b},z^{c}|z_{c+1},x_{a+1}\right), E\left(x^{a'},y^{b'},z^{c'}|y_{b'+1},x_{a'+1}\right)\right]
\\&=& - \left[E\left(x^{a'},y^{b'},z^{c'}|y_{b'+1},x_{a'+1}\right), E\left(x^{a},y^{b},z^{c}|z_{c+1},x_{a+1}\right)\right]
\\&=& -\frac{1}{2} \cdot \left(\left(\min\{a,a'\} - \min\{a,a'+1\} - \min\{a+1,a'\} + \min\{a+1,a'+1\}\right)  \right)
\\&=& \begin{cases}

   -\frac{1}{2} &\mbox{if $a'=a$,}\\

   0 &\mbox{if $a'\neq a$.}
\end{cases}
\end{eqnarray*}
Note that in this case if $a'=a$, then $e_2$ is before $e_1$ for $\textbf{x}$ as in Figure \ref{der1}(2).

\end{enumerate}
We conclude that for $\mathcal{N}=3$
\[\left[E_{e_1}, E_{e_2}\right]
=\begin{cases}\frac{1}{2} \cdot \epsilon_{\textbf{u}}(e_1) \cdot \epsilon_{\textbf{u}}(e_2) ,& e_2 \;\;is\;\;after\;\; e_1 \;\; for \;\; \textbf{u} \in \{\textbf{x},\textbf{y},\textbf{z}\}; \cr -\frac{1}{2}\cdot \epsilon_{\textbf{u}}(e_1) \cdot \epsilon_{\textbf{u}}(e_2) ,&e_2\;\;is\;\; before\;\; e_1 \;\; for \;\; \textbf{u}\in \{\textbf{x},\textbf{y},\textbf{z}\};\cr 0,&otherwise.\end{cases}
\]
\end{enumerate}

\begin{figure}
\includegraphics[scale=0.5]{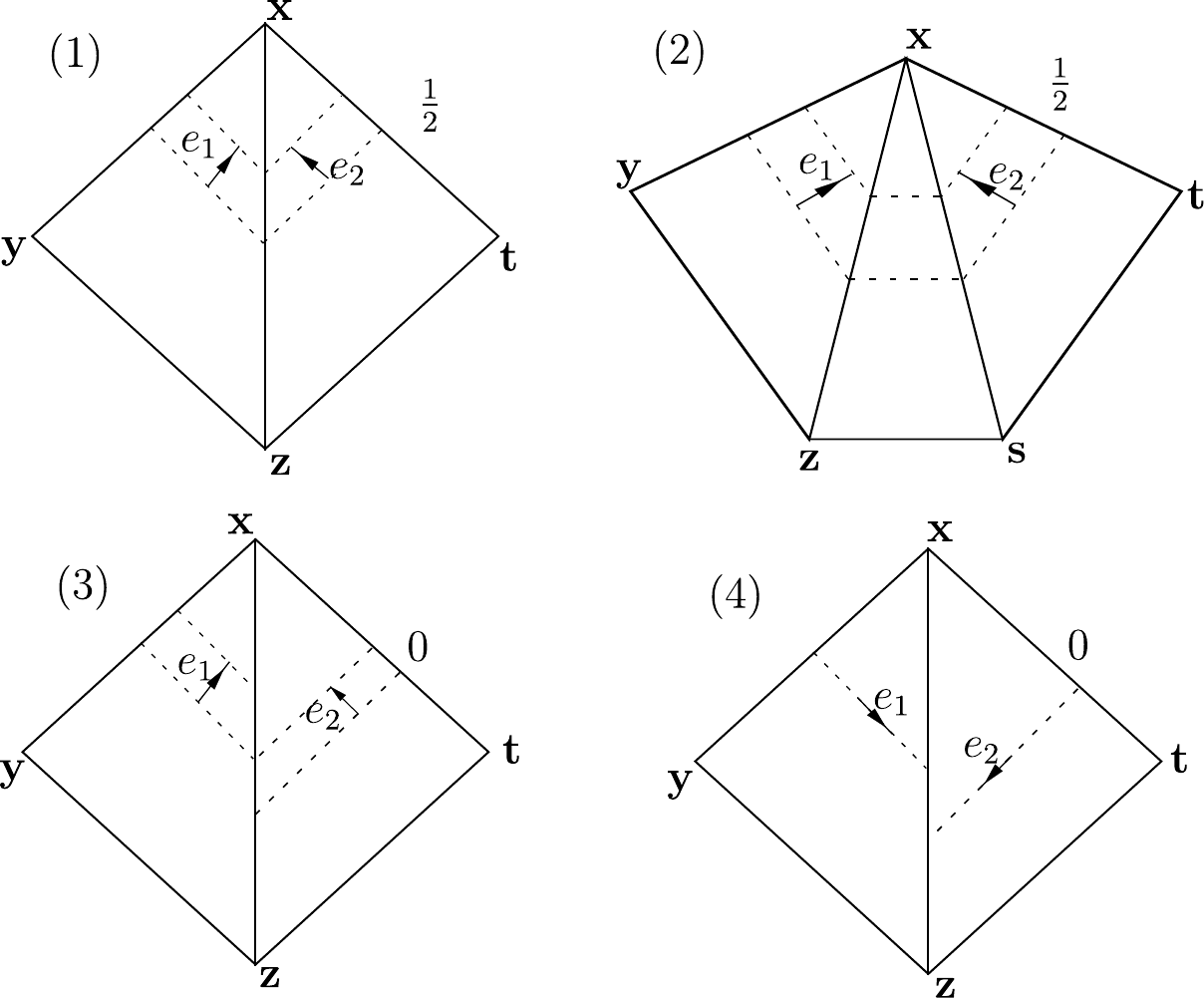}
\caption{Computation of $[E_{e_1},E_{e_2}]$ when the triangles for two oriented edge ratios have two common vertices or one common vertex.}
\label{der2}
\end{figure}

Suppose $\mathcal{N}=2$ as in Figure \ref{der2}(1)(3)(4). If $a\leq a'$ ($(a,b,c)$ and $(a',b',c')$ are defined similarly as the case $\mathcal{N}=3$ with respect to $(\textbf{x},\textbf{y},\textbf{z})$ and $(\textbf{x},\textbf{z},\textbf{t})$), we can combine the points $t_1,\cdots, t_{c'} \in \mathcal{P}$ with the points $x_1,\cdots, x_{a'}\in \mathcal{P}$ and apply Proposition \ref{propxyz} with the condition $l\geq l'=0$ there. If $a \geq a'$, we use $\left[E_{e_1}, E_{e_2}\right] = -\left[E_{e_2}, E_{e_1}\right]$ for arguing the same way as above. The case $\mathcal{N}=1$ follows in a similar way.
\end{proof}

\begin{proof}[\textbf{Proof of Theorem \ref{thmm}}]
To prove the theorem, we have to verify that
\[
\left[\theta_{\mathcal{T}_n}\left(X_A\right),\theta_{\mathcal{T}_n}\left(X_\tau\right)\right]= \frac{\theta_{\mathcal{T}_n} \left(\left\{X_A, X_\tau\right\}_{n}\right)}{\theta_{\mathcal{T}_n}\left(X_A\right)\cdot \theta_{\mathcal{T}_n}\left(X_\tau\right)} =  \varepsilon_{A \tau},
\]
for any $A, \tau \in \mathcal{I}_n' \cup \mathcal{J}_n$.

Each $V\in \mathcal{I}_n'$ is related to a graph with $4$-gon and an edge in the ideal triangulation $\mathcal{T}$, and each $V\in \mathcal{J}_n$ is related to a triangle in the ideal triangulation $\mathcal{T}$.
By symmetry, we have the following possible cases as shown in Figure \ref{Gall}:
\begin{figure}
\includegraphics[scale=0.5]{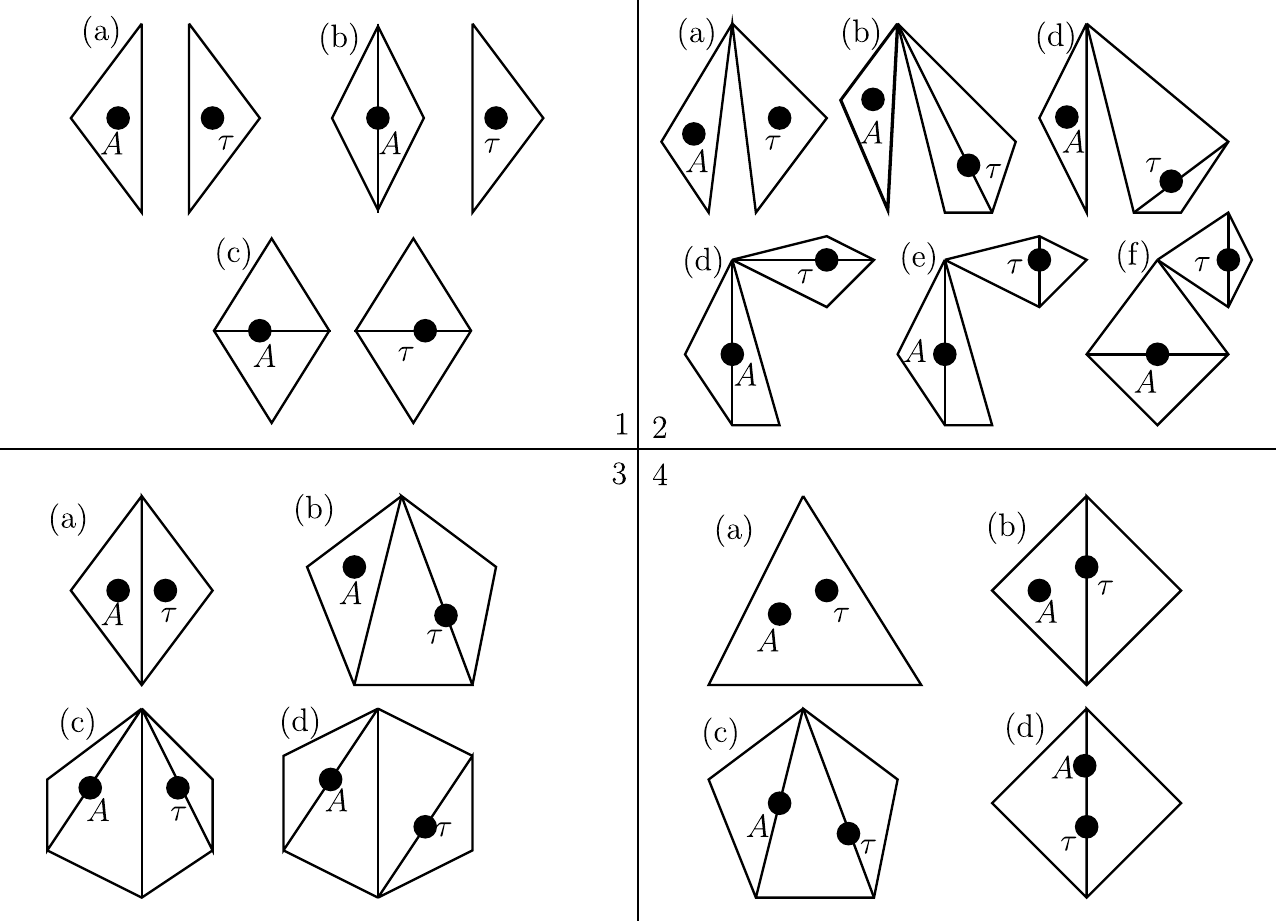}
\caption{All the cases up to symmetry}
\label{Gall}
\end{figure}
\begin{enumerate}
  \item Two graphs associated with $A$ and $\tau$ are separated by a line;
  \item Two graphs associated with $A$ and $\tau$ have one common point;
  \item Two graphs associated with $A$ and $\tau$ have two common points;
  \item other cases.
\end{enumerate}
Following Lemma \ref{lemct}, we write $\theta_{\mathcal{T}_n}\left(X_A\right)$ and $\theta_{\mathcal{T}_n}\left(X_\tau\right)$ as the products of directed edge ratios. Then we use Proposition \ref{propdr} to compute their swapping bracket case by case. We prove the case in Figure \ref{Gall} $4(a)$ explicitly and leave the details for the other cases to the reader.

Suppose $A \in \mathcal{J}_n$ ($\tau \in \mathcal{J}_n$ resp.) is specified by $(\textbf{x},\textbf{y},\textbf{z})$ and a triple of positive integers $(m,l,p)$ where $m+l+p=n$ ($(m',l',p')$ where $m'+l'+p'=n$ resp.). Let $e_1=\overrightarrow{v_{\textbf{x},\textbf{y},\textbf{z}}^{m+1,l,p-1} v_{\textbf{x},\textbf{y},\textbf{z}}^{m+1,l-1,p}}$, $e_2=\overrightarrow{v_{\textbf{x},\textbf{y},\textbf{z}}^{m-1,l+1,p} v_{\textbf{x},\textbf{y},\textbf{z}}^{m,l+1,p-1}}$, $e_3=\overrightarrow{v_{\textbf{x},\textbf{y},\textbf{z}}^{m,l-1,p+1} v_{\textbf{x},\textbf{y},\textbf{z}}^{m-1,l,p+1}}$, $e_1'=\overrightarrow{v_{\textbf{x},\textbf{y},\textbf{z}}^{m'+1,l',p'-1} v_{\textbf{x},\textbf{y},\textbf{z}}^{m'+1,l'-1,p'}}$, $e_2'=\overrightarrow{v_{\textbf{x},\textbf{y},\textbf{z}}^{m'-1,l'+1,p'} v_{\textbf{x},\textbf{y},\textbf{z}}^{m',l'+1,p'-1}}$, $e_3'=\overrightarrow{v_{\textbf{x},\textbf{y},\textbf{z}}^{m',l'-1,p'+1} v_{\textbf{x},\textbf{y},\textbf{z}}^{m'-1,l',p'+1}}$.
So
\[ \theta_{\mathcal{T}_n}\left(X_A\right) =  E_{e_1} \cdot
E_{e_2} \cdot
E_{e_3},\;\;\; \theta_{\mathcal{T}_n}\left(X_\tau\right) = E_{e_1'} \cdot
E_{e_2'} \cdot
E_{e_3'}.
\]
By the Leibniz's rule, we have
\begin{eqnarray*}
&&\left[\theta_{\mathcal{T}_n}\left(X_A\right) , \theta_{\mathcal{T}_n}\left(X_\tau\right)\right]
= \left[E_{e_1} E_{e_2} E_{e_3}, E_{e_1'} E_{e_2'} E_{e_3'}\right]
\\&=& \sum_{j=1}^3 \sum_{i=1}^3 \left[E_{e_i}, E_{e_j'}\right].
\end{eqnarray*}
Then we use Proposition \ref{propdr} for computing $\left[E_{e_i}, E_{e_j'}\right]$ for any $i,j=1,2,3$. Firstly, by observing Figure \ref{figure:strip}(1), we have 
\[\sum_{i=1}^3 \left[E_{e_i}, E_{e_j'}\right]= 0\]
if $e_j'$ lies outside the three strips formed by the dashed lines. Then, we compute case by case for $e_j'$ lying inside these stripes. We get the following property, denoted by $(**)$:

\emph{\[\sum_{i=1}^3 \left[E_{e_i}, E_{e_j'}\right]\neq 0
\] if and only if $e_j'$ lies strictly inside the hexagon formed by the vertices of $e_1,e_2,e_3$.
}
\begin{figure}
\includegraphics[scale=0.4]{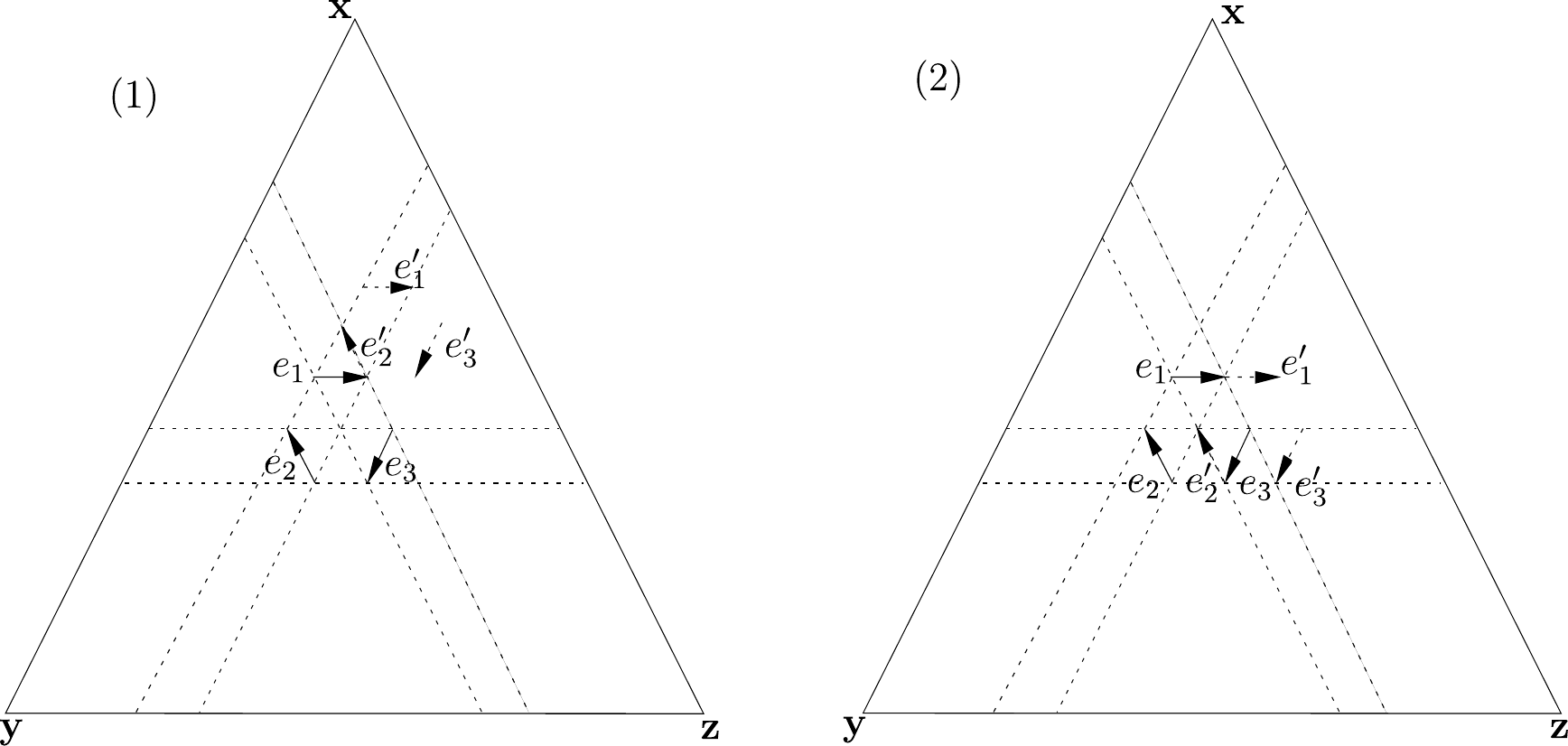}
\caption{Checking $\left[\theta_{\mathcal{T}_n}\left(X_A\right) , \theta_{\mathcal{T}_n}\left(X_\tau\right)\right]=\varepsilon_{A \tau}$.}
\label{figure:strip}
\end{figure}  
The property (**) provides us the only six possible edges for $e_j'$ that we have $\sum_{i=1}^3 \left[E_{e_i}, E_{e_j'}\right]\neq 0$.
Of course, when $(m,l,p) = (m',l',p')$, we have
\[\left[\theta_{\mathcal{T}_n}\left(X_A\right) , \theta_{\mathcal{T}_n}\left(X_\tau\right)\right] = 0 = \varepsilon_{A \tau}.\]
 When $(m,l,p) \neq (m',l',p')$, without loss of generality, suppose that $l>l'$.
\begin{enumerate}
  \item When $l-1>l'$, we have $l-1\geq l'+1$. As shown in Figure \ref{figure:strip}(1), none of $e_1',e_2',e_3'$ lie strictly inside the hexagon formed by the vertices of $e_1,e_2,e_3$. By property (**), we obtain
\[\left[\theta_{\mathcal{T}_n}\left(X_A\right) , \theta_{\mathcal{T}_n}\left(X_\tau\right)\right] = 0 = \varepsilon_{A \tau}.\]
  \item When $l-1=l'$:
  \begin{enumerate}
  \item When $m' \neq m$ and $m' \neq m+1$, none of $e_1',e_2',e_3'$ lie strictly inside the hexagon formed by the vertices of $e_1,e_2,e_3$. By property (**), we obtain
\[\left[\theta_{\mathcal{T}_n}\left(X_A\right) , \theta_{\mathcal{T}_n}\left(X_\tau\right)\right] = 0 = \varepsilon_{A \tau}.\]
  \item when $m' = m$, by Figure \ref{figure:strip}(2), we have
\begin{eqnarray*}
&&\left[\theta_{\mathcal{T}_n}\left(X_A\right) , \theta_{\mathcal{T}_n}\left(X_\tau\right)\right]
= \left[E_{e_1} E_{e_2} E_{e_3}, E_{e_1'} E_{e_2'} E_{e_3'}\right]
\\&=& \sum_{i=1}^3 \left[E_{e_i}, E_{e_1'}\right]+  \sum_{i=1}^3  \left[E_{e_i}, E_{e_2'}\right]+  \sum_{i=1}^3  \left[E_{e_i}, E_{e_3'}\right]
\\&=& 0 + \left(0+\frac{1}{2}+\frac{1}{2}\right)+0
\\&=& 1.
\end{eqnarray*}

\item Similarly, when $m' = m+1$, we have \[\left[\theta_{\mathcal{T}_n}\left(X_A\right) , \theta_{\mathcal{T}_n}\left(X_\tau\right)\right]=-1 .\]
  
  \end{enumerate}
In the cases of Figure \ref{Gall} 4(a), we conclude that
\[\left[\theta_{\mathcal{T}_n}\left(X_A\right) , \theta_{\mathcal{T}_n}\left(X_\tau\right)\right] = \varepsilon_{A \tau}.\]
     \end{enumerate}
\end{proof}

\section{Compatibility between any $\theta_{\mathcal{T}_n}$ and $\theta_{\mathcal{T}_n'}$}
For any two ideal triangulations $\mathcal{T}$ and $\mathcal{T}'$ of $D_k$, a finite sequence of flips can transform $\mathcal{T}$ to $\mathcal{T}'$. To prove that $\theta_{\mathcal{T}_n}$ and $\theta_{\mathcal{T}_n'}$ are compatible, it is enough to prove the compatibility when $\mathcal{T}'$ is obtained from $\mathcal{T}$ by one flip at the edge $e$. Let $\mathcal{T}_n$ and $\mathcal{T}_n'$ be the $n$-triangulations of $\mathcal{T}$ and $\mathcal{T}'$ respectively. As shown in \cite[Section 10.3]{FG06}, the flip $f_e$ is a composition of the mutations where each mutation corresponds to a Pl\"ucker relation for $\mathbb{K}^n$. Let us denote the transition map for the flip $f_e$ by $\mu_e^X$. It induces a rational map $\mu_e^{X*}$ from $\mathcal{FX}(\mathcal{T}_n')$ to $\mathcal{FX}(\mathcal{T}_n)$.
 
\begin{prop}
\label{proposition:T}
Let $\mathcal{T}$ and $\mathcal{T}'$ be two ideal triangulations of $D_k$ such that $\mathcal{T}'$ is obtained from $\mathcal{T}$ by a flip $f_e$ at the edge $e$. For any $Q \in \mathcal{FX}(\mathcal{T}_n')$, we have
\[\theta_{\mathcal{T}_n}\circ \mu_e^{X*}(Q) = \theta_{\mathcal{T}_n'}(Q).
\]
\end{prop}

\begin{proof}
Since the homomorphism $\theta_{\mathcal{T}_n}$ is induced from the homomorphism $\chi_n$, to prove the proposition, it is enough to prove the Pl\"ucker relation in $\mathcal{Z}_n(\mathcal{P})$.
\end{proof}

The proof of the following Pl\"ucker relation for the rank $n$ swapping algebra $\mathcal{Z}_n(\mathcal{P})$ also works for the Pl\"ucker relation for $\mathbb{K}^n$ even for the degenerate case. We are not aware of an appropriate reference for this fact, and provide a proof here.
\begin{lem}{\sc[Pl\"ucker relation]}
\label{lemma:Plucker}
For any $x^{n-2},a,b,c,d, u_1,\cdots,u_n \in \mathcal{P}$, we have the following equality in $\mathcal{Z}_n(\mathcal{P})$
\begin{equation}
\begin{aligned}
&\Delta\left(\left(x^{n-2},a,d\right),\left(u_1,\cdots,u_n\right)\right) \cdot \Delta\left(\left(x^{n-2},b,c\right),\left(u_1,\cdots,u_n\right)\right)
\\& + \Delta\left(\left(x^{n-2},a,b\right),\left(u_1,\cdots,u_n\right)\right)
 \cdot \Delta\left(\left(x^{n-2},c,d\right),\left(u_1,\cdots,u_n\right)\right)
\\& = \Delta\left(\left(x^{n-2},a,c\right),\left(u_1,\cdots,u_n\right)\right)
 \cdot \Delta\left(\left(x^{n-2},b,d\right),\left(u_1,\cdots,u_n\right)\right).
 \end{aligned}
\end{equation}
\end{lem}
\begin{proof}
Let $\widehat{u_j}:=u_1,\cdots,u_{j-1},u_{j+1},\cdots,u_n$. We have
\begin{equation}
\begin{aligned}
\label{equplu}
&\Delta\left(\left(x^{n-2},a,d\right),\left(u^n\right)\right) \cdot \Delta\left(\left(x^{n-2},b,c\right),\left(u^n\right)\right)
 \\&+ \Delta\left(\left(x^{n-2},a,b\right),\left(u^n\right)\right)
 \cdot \Delta\left(\left(x^{n-2},c,d\right),\left(u^n\right)\right)
\\& - \Delta\left(\left(x^{n-2},a,c\right),\left(u^n\right)\right)
 \cdot \Delta\left(\left(x^{n-2},b,d\right),\left(u^n\right)\right)
\\& = \sum_{j=1}^n   \Delta\left(\left(x^{n-2},a\right),\left(\widehat{u_j}\right)\right) \cdot 
\left\{ (-1)^{j+n} \cdot  d u_j  \cdot \Delta\left(\left(x^{n-2},b,c\right),\left(u^n\right)\right) 
\right.
\\& \left. +(-1)^{j+n} \cdot b u_j \cdot \Delta\left(\left(x^{n-2},c,d\right),\left(u^n\right)\right)  - (-1)^{j+n} \cdot  c u_j \cdot \Delta\left(\left(x^{n-2},b,d\right),\left(u^n\right)\right) \right\}
\end{aligned}
\end{equation}
For $i=1,\cdots, n-2$, we get
\[(-1)^{n+i}\cdot \Delta\left(\left(x^{n-2},a,x_i\right),\left(u^n\right)\right)= \sum_{j=1}^n (-1)^{i+j} \cdot   \Delta\left(\left(x^{n-2},a\right),\left(\widehat{u_j}\right)\right) \cdot x_i u_j =0.
\]
Using the above formula, we obtain that the right hand side of Equation \eqref{equplu} equals
\begin{equation*}
\begin{aligned}
&- \sum_{j=1}^n   \Delta\left(\left(x^{n-2},a\right),\left(\widehat{u_j}\right)\right) \cdot 
\\&
\left\{ \sum_{i=1}^{n-2} (-1)^{i+j} x_i u_j \cdot  \Delta\left(\left(x_1,\cdots,x_{i-1},x_{i+1} \cdots,x_{n-2},b,c,d\right),\left(u^n\right)\right) \right.
\\& \left.  
 +(-1)^{n-1+j} \cdot b u_j \cdot \Delta\left(\left(x^{n-2},c,d\right),\left(u^n\right)\right)  + (-1)^{n+j}\cdot c u_j \cdot \Delta\left(\left(x^{n-2},b,d\right),\left(u^n\right)\right) 
 \right. \\&
 \left. + (-1)^{n+1+j}\cdot d u_j  \cdot \Delta\left(\left(x^{n-2},b,c\right),\left(u^n\right)\right) \right\}
 \\&=  -\sum_{j=1}^n  \Delta\left(\left(x^{n-2},a\right),\left(\widehat{u_j}\right)\right) \cdot \Delta\left(\left(x^{n-2},b,c,d\right),\left(u^n, u_j\right)\right)
 \\&=0.
 \end{aligned}
\end{equation*}

\end{proof}

Since $\chi_n$ does not depend on the ideal triangulation, as a consequence, we again prove the following result.
\begin{cor}
\label{corollary:tri}
The rank $n$ Fock--Goncharov Poisson bracket $\{\cdot,\cdot\}_n$ does not depend on the ideal triangulation.
\end{cor}
\section{From cross fractions to $(n\times n)$-determinant ratios}
\label{section:opers}
We will relate a cross fraction to a product of two $(n\times n)$-determinant ratios through the following identification of points on circles.
\begin{defn}
For $n\geq 2$, let $\mathcal{P}=\{s\prec w \prec \cdots \prec t \prec s\}$ and 
\[\mathcal{P}_{n-1}= \{s_{n-1}\prec \cdots \prec s_{1} \prec w_{n-1} \prec \cdots \prec w_{1} \prec \cdots \prec t_{n-1} \prec  \cdots \prec t_{1} \prec s_{n-1}\},\]
where each $r\in \mathcal{P}$ corresponds to $n-1$ anticlockwise ordered points $r_{n-1},\dots, r_1$ nearby in $\mathcal{P}_{n-1}$ as in Figure \ref{figure:rtdr}(1)(2).
Let $\mathcal{RT}_n(\mathcal{P})$ be the sub fraction ring of $\mathcal{Q}_n(\mathcal{P})$ generated by all elements like $\frac{yx}{zx}$. Let $\mathcal{DR}_n(\mathcal{P}_{n-1})$ be the sub fraction ring of $\mathcal{Q}_n(\mathcal{P}_{n-1})$ generated by all elements like $E\left(x^{n-1}|y_1,z_1\right)$.  

The {\em homomorphism $\mu$} from $\mathcal{RT}_n(\mathcal{P})$ to $\mathcal{DR}_n(\mathcal{P}_{n-1})$ is defined by extending the following formula on the generators to $\mathcal{RT}_n(\mathcal{P})$ using Leibniz's rule
\[\mu\left(\frac{yx}{zx}\right) = E\left(x^{n-1}|y_1,z_1\right). \]
\end{defn}

\begin{prop}
The homomorphism $\mu$ is well-defined and injective.
\end{prop}
\begin{proof}
By Theorem \ref{thm:inv1} $B_{n\mathbb{K}}/S_{n\mathbb{K}}  \cong \mathcal{Z}_n(\mathcal{P})$ and $B_{n\mathbb{K}}'/S_{n\mathbb{K}}'  \cong \mathcal{Z}_n(\mathcal{P}_{n-1})$. We consider the geometric model instead. We embed $B_{n\mathbb{K}}/S_{n\mathbb{K}}$ into $B_{n\mathbb{K}}'/S_{n\mathbb{K}}'$ with respect to the homomorphism $\mu$ in the following way. We associate a vector for $B_{n\mathbb{K}}/S_{n\mathbb{K}}$ corresponding to $x$ on the left to a vector for $B_{n\mathbb{K}}'/S_{n\mathbb{K}}'$ corresponding to $x_1$ on the left,  a covector in $B_{n\mathbb{K}}/S_{n\mathbb{K}}$ corresponding to $x$ on the right to the wedge of $(n-1)$ vectors in $B_{n\mathbb{K}}'/S_{n\mathbb{K}}'$ corresponding to $x_1,\cdots,x_{n-1}$ on the left. Thus any relation in $\mathcal{RT}_n(\mathcal{P})$ generated by the $(n+1)\times (n+1)$ determinants in $R_n(\mathcal{P})$ is one-to-one correspondence with a relation in $\mu(\mathcal{RT}_n(\mathcal{P}))$ obtained from replacing each ordered pair by a $(n\times n)$-determinant with fixed right side $n$-tuple. We conclude that the homomorphism $\mu$ is well-defined and injective. 
\end{proof}

\begin{thm}
\label{theorem:cfd}
The injective homomorphism $\mu$ is Poisson with respect to the swapping bracket.
\end{thm}

\begin{figure}
\includegraphics[scale=0.5]{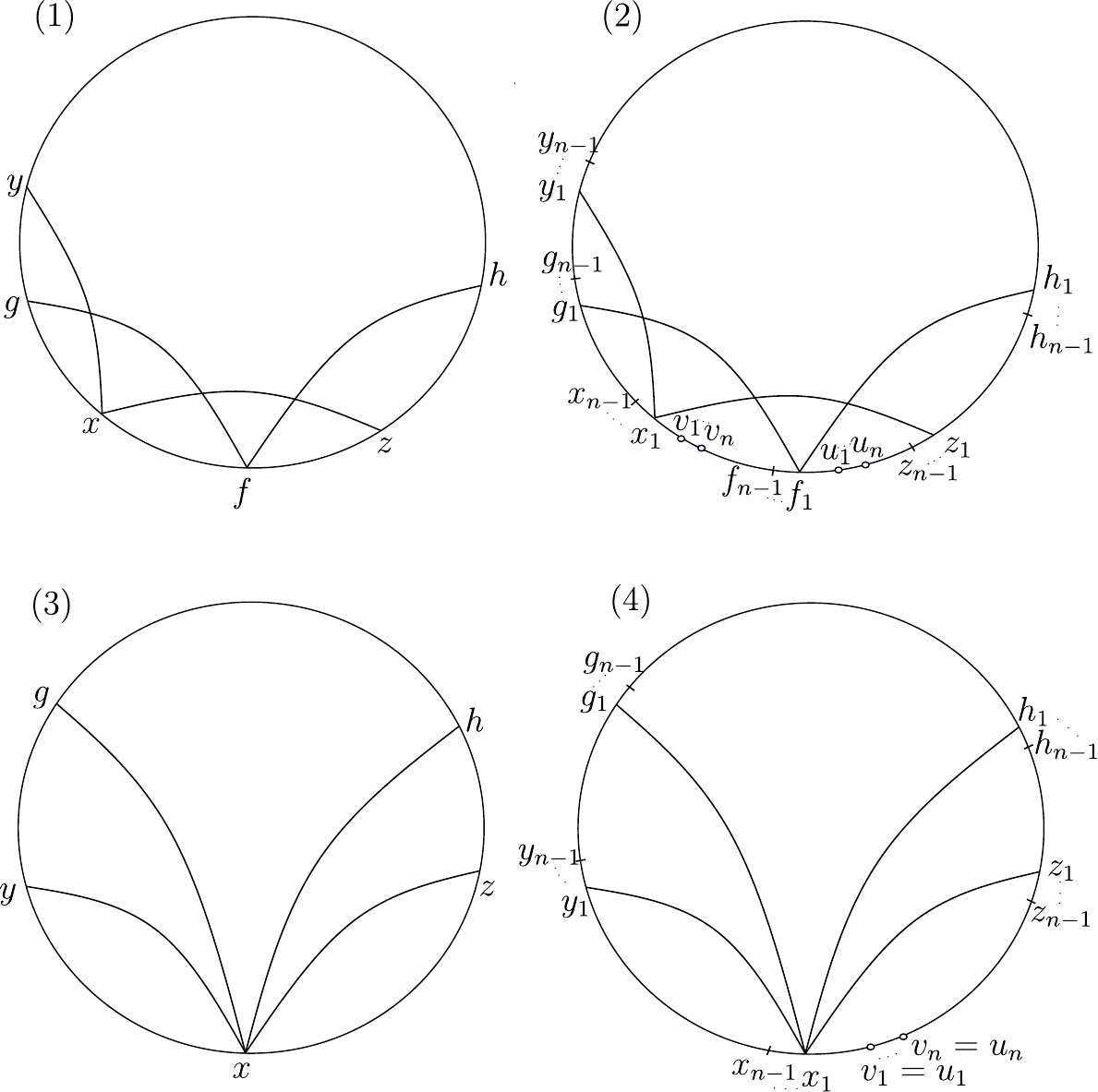}
\caption{}
\label{figure:rtdr}
\end{figure}

\begin{proof}
It is enough to prove the theorem for generators in $\mathcal{RT}_n(\mathcal{P})$. For two arbitrary generators $\frac{yx}{zx}, \frac{gf}{hf} \in \mathcal{RT}_n(\mathcal{P})$, we want to show that 
\[\mu\left(\left[\frac{yx}{zx}, \frac{gf}{hf}\right]\right)= \left[\mu\left(\frac{yx}{zx}\right), \mu\left(\frac{gf}{hf}\right)\right].\]
Firstly we have
\begin{equation}
\label{equation:mu}
\mu\left(\left[\frac{yx}{zx}, \frac{gf}{hf}\right]\right)=\mu([yx,gf])-\mu([yx,hf])-\mu([zx,gf])+\mu([zx,hf]).
\end{equation}
When $x\neq f$, we arrange $v^n$ ($u^n$ resp.) immediately after $x_1$ ($f_1$ resp.) with respect to the anticlockwise orientation as in Figure \ref{figure:rtdr}(2). Then we can use the fact that the linking number only depends on the corresponding position of the four points. This crucial arrangement allows us to get 
\begin{equation}
\label{equation:muid}
\begin{aligned}
&[\Delta((x^{n-1},y_1),(v^n)), \Delta((f^{n-1},g_1),(u^n))] 
\\&=\frac{\mathcal{J}(y_1 v_1, g_1 u_1)\cdot\Delta((x^{n-1},g_1),(v^n))\cdot \Delta((f^{n-1},y_1),(u^n))}{\Delta((x^{n-1},y_1),(v^n))\cdot \Delta((f^{n-1},g_1),(u^n))}
\\&= \mathcal{J}(y_1 x_1, g_1 f_1)\cdot E(x^{n-1}|g_1,y_1)\cdot E(f^{n-1}|y_1,g_1)
\\&= \mathcal{J}(y x, g f) \cdot \mu\left(\frac{gx}{yx}\right)\cdot \mu\left(\frac{yf}{gf}\right)
\\&= \mu\left([yx,gf]\right).
\end{aligned}
\end{equation}
We have similar formulas for the other three terms in the right hand side of Equation~\eqref{equation:mu}. Thus we obtain
\begin{equation}
\label{equation:mupoisson}
\begin{aligned}
&\mu\left(\left[\frac{yx}{zx}, \frac{gf}{hf}\right]\right)
\\&=[\Delta((x^{n-1},y_1),(v^n)), \Delta((f^{n-1},g_1),(u^n))]-[\Delta((x^{n-1},y_1),(v^n)), \Delta((f^{n-1},h_1),(u^n))]
\\&-[\Delta((x^{n-1},z_1),(v^n)), \Delta((f^{n-1},g_1),(u^n))]+[\Delta((x^{n-1},z_1),(v^n)), \Delta((f^{n-1},h_1),(u^n))]
\\&= \left[E(x^{n-1}|y_1,z_1), E(f^{n-1}|g_1,h_1)\right]= \left[\mu\left(\frac{yx}{zx}\right), \mu\left(\frac{gf}{hf}\right)\right].
\end{aligned}
\end{equation}
When $x=f$, we arrange the same way as above and $v_i=u_i$ for $i=1,\cdots,n$ as in Figure \ref{figure:rtdr}(4). By explicit computation, we get the same results as Equation \eqref{equation:muid} and Equation \eqref{equation:mupoisson} in this case. We conclude that the homomorphism $\mu$ is Poisson with respect to the swapping bracket.
\end{proof}

As a consequence, for $n=3$, our main theorem generalizes \cite[Chapter 3]{Su14} if we replace the elements like $E\left(x^{2}|y_1,z_1\right)$ by $\frac{yx}{zx}$. The advantage of the expression $\frac{yx}{zx}$ is that it allows us to get rid of $x_2 \in \mathcal{P}$, but it only works for $n\leq 3$. Using the Poisson homomorphism $\mu$, we have the following result.
\begin{cor}
The Poisson homomorphism in \cite[Theorem 10.7.2]{L18} is still Poisson after replacing each cross fraction by a product of two $(n\times n)$-determinant ratios.
\end{cor}

\section*{Acknowledgements}
This article generalizes the second part of my thesis under the direction of Fran\c cois Labourie at university of Paris-Sud. I very grateful to Fran\c cois Labourie for suggesting this subject and for his guidance.
I thank University of Paris-Sud, Max Planck Institute for Mathematics, Yau Mathematical Sciences Center at Tsinghua University and Luxembourg University.



\end{document}